\documentclass[11pt]{amsart}

\usepackage{amsmath,amsthm}
\usepackage {latexsym}
\usepackage{amssymb}
\usepackage{xcolor}

\usepackage[left=2.5cm,top=3cm,right=2.5cm,bottom=3cm,bindingoffset=0.5cm]{geometry}

\usepackage[utf8]{inputenc}
\setlength{\textwidth}{6.25in}

\newcommand{\beal}{\begin{align}}
\newcommand{\enal}{\end{align}}
\newcommand{\bealn}{\begin{align*}}
\newcommand{\enaln}{\end{align*}}
\newcommand{\bear}{\begin{eqnarray}}
\newcommand{\eear}{\end{eqnarray}}
\newcommand{\beeq}{\begin{equation}}
\newcommand{\eneq}{\end{equation}}

\newcommand{\eps}{{\varepsilon}}
\newcommand{\R}{{\mathbb R}}

\def\bm{\left[ \begin{array}{cc}}
\def\endm{\end{array}\right]}

\def\eps{\varepsilon}

\def\bm{\left[\begin{matrix} }
\def\endm{\end{matrix}\right]}

\def\R{{\mathbb R}}

\newtheorem{theorem}{Theorem}
\newtheorem{lemma}[theorem]{Lemma}
\newtheorem{defi}[theorem]{Definition}

\newtheorem{proposition}[theorem]{Proposition}

\newtheorem{remark}[theorem]{Remark}

\renewcommand{\Im}{\,{\rm Im}\,}
\renewcommand{\Re}{\,{\rm Re}\,}

\renewcommand{\hat}{\widehat}
\renewcommand{\epsilon}{\eps}
\renewcommand{\tilde}{\widetilde}
\numberwithin{equation}{section}
\numberwithin{theorem}{section}

\begin{document}

\title[Stabilization of focusing Klein-Gordon equations]{Boundary  stabilization of  focusing NLKG near unstable equilibria: radial case}

\author{Joachim Krieger}
\address{Bâtiment des Mathématiques, EPFL\\Station 8, CH-1015 Lausanne, Switzerland}
\email{\texttt{joachim.krieger@epfl.ch}}
\thanks{}

\author{Shengquan Xiang}
\address{Bâtiment des Mathématiques, EPFL\\
 Station 8, CH-1015 Lausanne, Switzerland}
\email{\texttt{shengquan.xiang@epfl.ch.}}
\thanks{}


\begin{abstract}
We investigate the stability and stabilization of  the cubic focusing Klein-Gordon equation around static solutions on the closed ball of radius L   in $\mathbb{R}^3$.  First we show that the system is  linearly unstable near the static solution $u\equiv 1$ for any dissipative boundary condition $u_t+ au_{\nu}=0, a\in (0, 1)$. Then by means of boundary controls  (both open-loop and closed-loop) we  stabilize  the system around this equilibrium exponentially under the condition  $\sqrt{2}L\neq \tan \sqrt{2}L$. Furthermore, we show that the equilibrium can be stabilized with any rate less than $ \frac{\sqrt{2}}{2L} \log{\frac{1+a}{1-a}}$, provided $(a,L)$ does not belong to a certain zero set. This rate is sharp.
\end{abstract}
\subjclass[2010]{35B34,  35B37,   35B40}
\thanks{\textit{Keywords.} cubic Klein-Gordon, focusing,  stabilization.}
\maketitle
\smallskip


\tableofcontents

\section{Introduction}
In this paper we  consider a model customarily studied on $\mathbb{R}^3$, the focusing energy subcritical Klein-Gordon equation
\begin{equation}\label{eq:KG}
\Box u+u = u^3, \; \Box=\partial_{tt}- \Delta. 
\end{equation}
This model admits static solutions, namely solitons  $W(x)$ (see for example \cite{coffman, Strauss-1977}), 
which are unstable equilibria \cite{Schlag-Nakanishi}.  The instability comes from the linearization around $W$ which admits a negative eigenmode, leading generically to exponential growth for the corresponding wave flow.  As one of the  core problems of dispersive PDEs, the long time behavior of solutions  of a number of related problems has been extensively  studied in the last twenty years and has led to fruitful theories.   Let us mention some of them:  the work on scattering by the Kenig-Merle method \cite{Kenig-Merle} on the energy critical focusing nonlinear wave equations,   the stability of special solutions using the center manifold method by Krieger-Nakanishi-Schlag \cite{Krieger-Nakanishi-Schlag-2012, Krieger-Nakanishi-Schlag-2013} in the context of the one-dimensional Klein-Gordon equation,  and  the soliton resolution for the energy critical wave equation by \cite{Thomas-Jia-Kenig-Merle}. See also the  book by Tao \cite{Tao-book-pde} on an excellent introduction on dispersive equations.

\subsection{The damping stabilization of the wave equation and the NLKG equation}
It is   natural to ask if unstable static solutions can be stabilized with the help of control terms: indeed, in control theory, this is the so called stabilization problem.  When we restrict wave equations to bounded domains,  control terms can be naturally implemented on  the boundary of the domain,  resulting in boundary control (or boundary stabilization).     The controllability of wave equations has been extensively studied in the literature, in particular the linear systems that will be dealing with are exactly controllable. This property can be shown via different approaches, for example the multiplier method \cite{Zuazua-1990}, Carleman estimates \cite{Tataru-JMPA-1994, Tataru-AMO-1995}, as well as microlocal analysis \cite{Bardo-Lebeau-Rauch}, and the references therein. We also note that the closely related unique continuation problem for wave operators is studied by Tataru, Hörmander, Robbiano--Zuily \cite{Hormander-97, Tataru-95, Robbiano-Zuily}  as well as others.   

To study the issue of stabilizing an unstable solution in the context of \eqref{eq:KG}, we briefly discuss some static solutions for it {\it{in the context of a bounded domain}}.   For example, imposing a Neumann boundary condition, a positive static solution solves the following elliptic Klein-Gordon equation,
\begin{gather}
-\Delta u+ u-u^3=0  \textrm{ in } \Omega, \label{N1}\\
u_{\nu}=0  \textrm{ on } \partial\Omega, \label{N2}\\
u>0  \textrm{ in } \Omega. \label{N3}
\end{gather}
To simplify the model and  reduce the difficulty, in this paper we shall only work with the equation in  a  {\it radial setting}, and from now on we let
\begin{gather}
\textrm{the domain  $\Omega$ denote the closed ball $B_{L/\sqrt{2}}(0)$ as subset of $\mathbb{R}^3$,} \notag
\end{gather}  
where the extra $\sqrt{2}$ is added for  normalization reasons to simplify the calculations.  Clearly \eqref{N1} - \eqref{N3} admits the constant solution  $u\equiv 1$, but in general it is not unique. We refer to    \cite{Brezis-Nirenberg, Bahri-Coron, Lions-1, Lions-2, Struwe-book} for the same problem with Dirichlet boundary conditions. In our specific case, one has the following result due to Lin-Ni-Takagi:
\begin{proposition}[Lin-Ni-Takagi, \cite{Lin-Ni-Takagi}]
There exist numbers $R_0$ and $R_1$ satisfying   $0<R_0<R_1$ such that,
\begin{itemize}
\item[(i)] if $L< R_0$, then  Equations \eqref{N1}--\eqref{N3}
only admit the  unique  solution $u= 1$;
\item[(ii)] if $L> R_1$, then  Equations  \eqref{N1}--\eqref{N3}
 also admit another non-trivial solution $u(x)=W_L(x)$. 
\end{itemize}
\end{proposition}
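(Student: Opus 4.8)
The plan is to treat the two parts independently; the argument is essentially that of Lin--Ni--Takagi. \emph{Part (i): rigidity for small balls.} Let $u>0$ solve \eqref{N1}--\eqref{N3} on $\Omega=B_{L/\sqrt2}(0)$ and put $\bar u:=|\Omega|^{-1}\int_\Omega u$. Testing \eqref{N1} against $u-\bar u$, integrating by parts and dropping the boundary term by \eqref{N2}, one gets
\[
\int_\Omega|\nabla u|^2=\int_\Omega(u^3-u)(u-\bar u)=\int_\Omega(u-\bar u)^2\bigl(u^2+u\bar u+\bar u^2-1\bigr)\le\bigl(3\|u\|_{L^\infty}^2-1\bigr)\int_\Omega(u-\bar u)^2 .
\]
Combined with the Poincar\'e--Wirtinger inequality $\int_\Omega(u-\bar u)^2\le\mu_2(\Omega)^{-1}\int_\Omega|\nabla u|^2$, where $\mu_2(\Omega)=2L^{-2}\mu_2(B_1)$ is the first nonzero Neumann eigenvalue of $-\Delta$ on $\Omega$, this forces $\nabla u\equiv0$ once $3\|u\|_{L^\infty}^2-1<2L^{-2}\mu_2(B_1)$; then $u$ is a positive constant with $u=u^3$, i.e.\ $u\equiv1$. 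So (i) reduces to a uniform a priori bound: there is $C_0$ with $\|u\|_{L^\infty}\le C_0$ for every positive solution on $B_{L/\sqrt2}(0)$ with $0<L\le1$, whereupon one may take $R_0:=\min\{1,\sqrt{2\mu_2(B_1)/(3C_0^2)}\}$.

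\emph{The a priori bound.} I would obtain this by a blow-up argument, using crucially that the cubic is \emph{subcritical} in dimension three (the Sobolev exponent of $-\Delta w=w^q$ on $\R^3$ being $q=5$). If positive solutions $u_k$ on $B_{\rho_k}$, $\rho_k=L_k/\sqrt2\le1/\sqrt2$, had $M_k:=\|u_k\|_{L^\infty}\to\infty$, attained at $x_k$, then along a subsequence one of two regimes occurs. If $\rho_kM_k\to\infty$, then $w_k(y):=M_k^{-1}u_k(x_k+M_k^{-1}y)$ solves $\Delta w_k=M_k^{-2}w_k-w_k^3$ and, by elliptic estimates, converges in $C^2_{\rm loc}$ to a nonnegative $w$ with $\Delta w+w^3=0$ on $\R^3$ --- with an even reflection across the hyperplane when $x_k$ approaches $\partial\Omega$ on the scale $M_k^{-1}$ and the limit lives on a half-space with Neumann data --- and $w(0)=1$; this is impossible by the Gidas--Spruck Liouville theorem. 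If instead $\rho_kM_k$ stays bounded (so $\rho_k\to0$), rescale to the unit ball: $\hat v_k(z):=M_k^{-1}u_k(\rho_k z)$ on $B_1$ has $\|\hat v_k\|_{L^\infty}=1$ and solves $\Delta\hat v_k=\rho_k^2\hat v_k-(\rho_kM_k)^2\hat v_k^3$ with Neumann data, hence converges to some $\hat v$ with $\Delta\hat v+\beta\hat v^3=0$ on $B_1$, $0\le\hat v\le1$, $\|\hat v\|_{L^\infty}=1$; superharmonicity, the strong maximum principle and Hopf's lemma rule out $\beta>0$, so $\hat v\equiv1$ and therefore $u_k\ge M_k/2$ on all of $B_{\rho_k}$ for $k$ large. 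But integrating \eqref{N1} over $\Omega$ and using \eqref{N2} gives the identity $\int_\Omega u=\int_\Omega u^3$, hence $\int_\Omega u^3\le|\Omega|$ by H\"older, which contradicts $u_k\ge M_k/2$ on $B_{\rho_k}$ once $M_k>2$. Either regime yields a contradiction.

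\emph{Part (ii): existence for large balls.} I would minimise the scale-invariant quotient
\[
S_L:=\inf\Bigl\{\tfrac{\int_\Omega|\nabla u|^2+\int_\Omega u^2}{(\int_\Omega u^4)^{1/2}}\ :\ u\in H^1(\Omega),\ u\not\equiv0\Bigr\}.
\]
As $4$ is $H^1$-subcritical in $\R^3$, the embedding $H^1(\Omega)\hookrightarrow L^4(\Omega)$ is compact and $S_L$ is attained at some $u_L$, which may be taken $\ge0$ and with $\int_\Omega u_L^4=1$; then $u_L$ solves $-\Delta u_L+u_L=S_Lu_L^3$ with Neumann data, and by the strong maximum principle and Hopf's lemma $u_L>0$ on $\overline\Omega$, so $W_L:=S_L^{1/2}u_L$ is a positive solution of \eqref{N1}--\eqref{N3}. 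It remains to see $W_L$ is nonconstant for $L$ large, equivalently that $S_L<|\Omega|^{1/2}$ (the value of the quotient at any positive constant, which is what $S_L$ would equal were the minimiser constant). But $|\Omega|^{1/2}=(\tfrac43\pi)^{1/2}(L/\sqrt2)^{3/2}\to\infty$, whereas testing the quotient with a translate of the exponentially decaying ground state $W_\infty$ of $-\Delta W+W=W^3$ on $\R^3$, cut off and centred at $0$, gives $S_L\le S_\infty+o(1)$, bounded independently of $L$. Hence $S_L<|\Omega|^{1/2}$ once $L>R_1$ for a suitable $R_1$, no minimiser is constant, and $W_L$ is the sought nontrivial static solution. (Alternatively: for $L$ large the linearisation $-\Delta-2$ at $u\equiv1$ has several negative Neumann eigenvalues, so $u\equiv1$ has Morse index $\ge2$ and cannot be a least-energy solution.)

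\emph{Main obstacle.} Everything is soft except the uniform $L^\infty$-bound in Part~(i): a Poincar\'e inequality and a scaling computation do (i), while a compact minimisation and an elementary energy comparison do (ii). That bound is the crux; it rests on the subcritical Gidas--Spruck Liouville theorem and, for the degenerate regime in which the dilation scale is commensurate with the radius of the ball, on the conservation identity $\int_\Omega u=\int_\Omega u^3$ special to the pure power and the Neumann condition.
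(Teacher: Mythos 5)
The paper itself offers no proof of this proposition: it is stated as a citation to Lin--Ni--Takagi, with only the remark that the rescaling $u(x)\mapsto\sqrt\mu\,u(\sqrt\mu\,x)$ identifies the problem with theirs. Your blind reconstruction is correct and is essentially the argument of that reference: for part (i) the Poincar\'e--Wirtinger computation reduces rigidity to a uniform $L^\infty$ a priori bound, which you rightly flag as the crux and obtain by the standard blow-up/Gidas--Spruck Liouville dichotomy together with the constraint $\int_\Omega u=\int_\Omega u^3$; for part (ii) the compact $H^1\hookrightarrow L^4$ minimization, with the observation that the constant's Rayleigh quotient $|\Omega|^{1/2}\to\infty$ while a cut-off ground-state bubble keeps $S_L$ bounded, correctly produces a nonconstant positive solution for $L$ large.
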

 Indeed, in \cite{Lin-Ni-Takagi} the authors   considered the more general system $-\Delta u+ \mu u-u^3=0$ with  a general  parameter $\mu>0$.  Moreover, when the nonlinearity is close enough to the  critical value,  Rey-Wei \cite{Rey-Wei} showed that  this non-trivial solution $W_L$ can be obtained by departing from the unique soliton solution $W$ of the same equation  in the unbounded domain.\\
  
In this paper,  we focus on the stabilization problem  around the simplest  solution $u=1$.  It  is easy to prove the instability around this equilibrium at the linear level by considering the linearization
\begin{gather}
\Box u- 2u=0   \textrm{ in } \Omega,    \notag\\
u_{\nu}=0   \textrm{ on } \partial\Omega.   \notag
\end{gather}
See Theorem~\ref{thm-unstab} below concerning a variant of this instability result.  

Various methods have been introduced to stabilize different wave equations, including the Gramian method \cite{Komornik}, the Riccati method \cite{Ammari-Duyckaerts-Shirikyan},  the backstepping method \cite{Krstic-wave},  the Lyapunov approach \cite{Coron-Trelat-2006, Lasiecka-Tataru-1993},  the duality method  \cite{Russell-1978},  the damping method \cite{Bardo-Lebeau-Rauch,  Burq-wave,  Burq-Gerard-wave, Lagnese,   Laurent-2011, Zuazua-stab-wave-1990}, to just name a sample. Let us comment more on the so called damping method, which is probably the most investigated  method on the stabilization of dispersive equations.  
Its idea  is to replace the Neumann boundary condition by a dissipative boundary condition, 
\begin{equation}
u_t+ au_{\nu}=0  \textrm{ on } \partial\Omega,   \notag
\end{equation}
and to show that the new system is stable.  To explain this in more detail, consider the model equation
\begin{gather}
\Box u=0   \textrm{ in } \Omega,    \notag\\
u_t+ a u_{\nu}=0   \textrm{ on } \partial\Omega.   \notag
\end{gather}
Define the energy of the wave equation by 
\begin{equation}\label{E0}
E_1(u[t]):= \frac{1}{2}\int_{\Omega} \left(|\partial_t u|^2+ |\nabla u|^2\right)(x) dx, \forall u[t]:= (u, u_t)\in H^1(\Omega),
\end{equation}
and perform a simple integration by parts. We thereby get the following ``decay'' property
\begin{equation}\label{iden-11}
\frac{d}{dt} E_1(u[t])= -\int_{\partial \Omega} \frac{1}{a(x)} |u_t(x)|^2 dx\leq 0.
\end{equation}
 It remains to understand how ``fast" the solution decays. For example, does the solution decay exponentially? This is equivalent to showing the following observability inequality: there exist some $T>0$ and $C>0$ such that the solution satisfies,
\begin{equation*}
\int_0^T \int_{\partial \Omega} \frac{1}{a(x)} |u_t(x)|^2 dx dt\geq C E_1(u[0]), \forall u[0]\in H^1(\Omega)\times L^2(\Omega).
\end{equation*}  
The proof of such an observability inequality is non-trivial, and different methods have been introduced for this purpose, for example the multiplier method. Based on the latter Lagnese characterized sufficient conditions on the choice of the ``damping function" $a(x)$ for exponential stabilization in \cite{Lagnese}. Other related works include for example  Zuazua \cite{Zuazua-stab-wave-1990} on the local internal stabilization of semilinear wave equations.    Another method, based on microlocal analysis, relates this problem to the propagation of singularities:  in the seminal work \cite{Bardo-Lebeau-Rauch}   Bardos-Lebeau-Rauch introduced the Geometric Control Condition (GCC) which provides an almost sharp condition for the controllability and the dissipative boundary stabilization  of the linear wave equation (see also Burq-Gérard \cite{Burq-Gerard-wave} and Burq \cite{Burq-wave} for improvements and simplifications). This powerful method has been widely used for the study of other linearized dispersive equations and nonlinear dispersive equations (but in a defocusing setting; as we shall see later on our focusing system is unstable even with dissipative boundary control). For example, Laurent  \cite{Laurent-2011} proved the  local stabilization of critical defocusing Klein-Gordon equations using internal control,  and Dehman-Lebeau-Zuazua \cite{Dehman-Lebeau-Zuazua}  proved the local boundary  stabilization of the subcritical semilinear wave equation. 
Furthermore, even when  GCC is not satisfied, Lebeau-Robbiano \cite{Lebeau-Robbiano-stabilization} proved asymptotic stability. Finally, we remark that all these results are proved in a general non-radial setting.  
\\

Let us now return to the equation arising upon linearizing \eqref{eq:KG} around the equilibrium $u = 1$, with dissipative boundary condition: 
\begin{gather}
\Box u- 2u=0   \textrm{ in } \Omega,    \notag\\
u_t+ a u_{\nu}=0   \textrm{ on } \partial\Omega.   \notag
\end{gather} 
Defining the energy
\begin{equation}\label{E01}
E_0\left( u[t]\right):= \frac{1}{2}\int_{\Omega} \left(|\partial_t u|^2+ |\nabla u|^2-2 |u|^2\right)(x) dx, \; \forall u[t]\in H^1(\Omega),
\end{equation}
we  obtain the ``decay''  property,
\begin{equation}\label{iden}
\frac{d}{dt} E_0(u(t))= -\int_{\partial \Omega} \frac{1}{a(x)} |u_t(x)|^2 dx\leq 0.
\end{equation}
However, since the energy $E_0$ is not positive definite (this comes from the focusing nature of \eqref{eq:KG}), the decrease of the value of $E_0$ does not imply the decay of the solution.  Indeed, as we shall prove later on in Theorem~\ref{thm-unstab}, this system is also unstable despite the dissipative boundary condition. Heuristically, we observe that  the instability ought to come from low-frequency modes of the system, since the high-frequency modes are expected to be  stable according the above mentioned theories on the wave equation.    
\\

To remedy this, our idea is to add a suitable control on the boundary, 
$$  u_t+ a u_{\nu}= b \textrm{ in  } \Omega$$ 
 to stabilize those finitely many unstable modes.  In general terms,  
\begin{itemize}
\item based on  explicit calculations we show that the spectrum of the operator generating the wave flow under the dissipative boundary condition has an asymptotic line in the left half complex domain (see Section \ref{subsec-spectral},   Lemma \ref{lem-eigen} and especially its proof for details);
\item using resolvent estimates we further prove that the high-frequency part of the system is exponentially stable with explicit decay rates (see Sections \ref{subsec-asym}--\ref{subsec-boum} for details);
\item we show that by adding suitable controls the low frequency part can be stabilized. This  step is achieved via a holomorphic extension approach (see Section \ref{sec-lin} and Section \ref{sec:constructionfeedback} for details).
\end{itemize}
This strategy will be further explained in Section \ref{sec-lin}.
Remark that the idea of exploiting the high-frequency stability of a system has been previously used by other authors. For example,  Ammari-Duyckaerts-Shirikyan proved  a general exponential stabilization result on damped defocusing like equations with  internal controls in \cite{Ammari-Duyckaerts-Shirikyan}, where they adapted the Riccati approach to stabilize the low frequency part. We emphasize that our stabilization approach appears different from the existing ones and provides explicit feedback laws with sharp decay rate.

\subsection{The main results}

In this paper, our goal is to investigate the controllability of the unstable static solution $u = 1$ of the focussing NLKG equation by means of a very direct and explicit method related to resolvent estimates, yielding an essentially sharp result.

First, the fact that this solution is (lineary) unstable follows from 
\begin{theorem}\label{thm-unstab}
Let $L> 0$. Let $a\in (0, 1)$.  The cubic focusing  Klein-Gordon system 
\begin{gather}
\Box u+ u-u^3=0, \textrm{ in } \Omega,    \notag\\
u_t+ au_{\nu}=0, \textrm{ on } \partial\Omega,  \notag
\end{gather}
is linearly unstable around the equilibrium $(1, 0)$.
\end{theorem}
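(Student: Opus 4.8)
The plan is to linearize the equation around the equilibrium $(1,0)$ and exhibit an explicit exponentially growing mode satisfying the dissipative boundary condition. Writing $u = 1 + v$, the linearized equation is $\Box v - 2v = 0$ in $\Omega$ with $v_t + a v_\nu = 0$ on $\partial\Omega$. Since we are on the radial ball $B_{L/\sqrt 2}(0) \subset \mathbb{R}^3$, I would look for separated solutions of the form $v(t,x) = e^{\lambda t} \phi(x)$ with $\phi$ radial, $\phi(r) = \psi(r)/r$. The spatial profile must then solve $\Delta \phi = (\lambda^2 - 2)\phi$; for instability we want $\Re \lambda > 0$, and the natural regime is $\lambda^2 - 2 < 0$, i.e.\ $\lambda \in (0,\sqrt 2)$, so that the radial equation $\psi'' = (\lambda^2-2)\psi$ has oscillatory solutions $\psi(r) = \sin(\sqrt{2-\lambda^2}\, r)$ (the choice enforcing regularity $\phi \sim \psi'(0)$ at the origin). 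Thus $\phi(r) = \frac{\sin(\omega r)}{r}$ with $\omega = \sqrt{2-\lambda^2}$ is a smooth radial solution of the interior equation.

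The second step is to impose the boundary condition at $r = L/\sqrt 2$. Since $v = e^{\lambda t}\phi$, we have $v_t = \lambda v$ and $v_\nu = \partial_r v$, so the condition $v_t + a v_\nu = 0$ becomes the transcendental equation
\begin{equation}
\lambda\, \phi(R) + a\, \phi'(R) = 0, \qquad R := L/\sqrt 2. \notag
\end{equation}
Plugging in $\phi(r) = \sin(\omega r)/r$ and simplifying gives a relation of the form
\begin{equation}
\lambda = -a\left( \omega \cot(\omega R) - \frac{1}{R}\right), \qquad \omega = \sqrt{2-\lambda^2}. \notag
\end{equation}
The task is then to show this equation has a root with $\lambda \in (0,\sqrt 2)$. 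I would do this by a continuity/intermediate-value argument: define $F(\lambda)$ as the difference of the two sides, treat $\omega = \omega(\lambda) = \sqrt{2-\lambda^2}$ as a function decreasing from $\sqrt 2$ to $0$ as $\lambda$ runs over $(0,\sqrt 2)$, and check the sign of $F$ at the endpoints. As $\lambda \to \sqrt 2^-$ one has $\omega \to 0^+$, and $\omega\cot(\omega R) - 1/R \to 0$ (since $\cot(\omega R) \approx \frac{1}{\omega R} - \frac{\omega R}{3}$), so the right-hand side $\to 0$ while the left-hand side $\to \sqrt 2 > 0$; hence $F > 0$ near that endpoint. As $\lambda \to 0^+$, $\omega \to \sqrt 2$ and the right-hand side tends to $-a(\sqrt 2 \cot(\sqrt 2 R) - 1/R)$, which is a fixed real number; provided this quantity is positive (equivalently $\sqrt 2\cot(\sqrt2 R) > 1/R$, i.e.\ $\tan(\sqrt 2 R) < \sqrt 2 R$, matching the hypothesis $L \ne \tan L$ after unwinding $\sqrt 2 R = L$) one gets $F < 0$ at the left endpoint, yielding a root in between. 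If instead the sign there is the other way, one should look among a higher "branch" where $\omega R$ crosses a multiple of $\pi$, on which $\cot$ runs over all of $\mathbb{R}$, so that $F$ changes sign regardless; I would organize the argument so that some branch always produces an admissible root, possibly allowing complex $\lambda$ with positive real part if no real root exists on the principal branch.

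The main obstacle I anticipate is the careful bookkeeping at the boundary equation: one must rule out that every solution of the transcendental equation has $\Re\lambda \le 0$, and the behavior of $\cot(\omega R)$ near its poles (where $\omega R \in \pi\mathbb{Z}$) must be handled to guarantee a genuine sign change rather than a spurious crossing through infinity. A clean way around this is to rephrase the eigenvalue problem variationally: the operator $-\Delta - 2$ with the relevant boundary form is self-adjoint-like only in a weighted sense because of the damping, but its quadratic form $\int_\Omega |\nabla\phi|^2 - 2|\phi|^2$ is negative on constants (take $\phi \equiv 1$: the bulk term is $-2|\Omega| < 0$), which forces a negative "energy" direction and hence, after accounting for the first-order-in-$\lambda$ boundary term, a genuine unstable eigenvalue. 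I would present the explicit transcendental-equation computation as the main line of proof since it is constructive and gives the growth rate directly, reserving the variational remark as a sanity check. Note the condition $a \in (0,1)$ and $L \ne \tan L$ should enter precisely in ensuring the relevant branch of the equation admits a root with $\Re\lambda > 0$.
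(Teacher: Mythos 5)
Your approach is, at bottom, the same as the paper's: look for a separated mode $v = e^{\lambda t}\phi(r)$ with $\phi(r) = \sin(\omega r)/r$, $\omega=\sqrt{2-\lambda^2}$, reduce the dissipative condition $v_t + a v_\nu = 0$ to a scalar transcendental equation in $\lambda$, and run an intermediate value argument. After the paper's rescaling $\overline u(t,x)=u(t/\sqrt2,x/\sqrt2)$, your relation $\lambda = -a\big(\omega\cot(\omega R)-1/R\big)$ with $R=L/\sqrt2$ is exactly \eqref{uns-ex-key} with $s=\lambda/\sqrt2\in(0,1)$, rearranged from $\tan$ into $\cot$.

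The execution has real gaps, though. First, the sign at the left endpoint is reversed. With $F(\lambda)=\lambda+a\big(\omega\cot(\omega R)-1/R\big)$ one gets $F(0)=a\sqrt2\big(\cot L-1/L\big)$, so $F(0)<0$ requires $\cot L<1/L$, which on $(0,\pi/2)$ is $\tan L > L$ --- the opposite of what you wrote ("$\sqrt2\cot(\sqrt2R)>1/R$, i.e.\ $\tan(\sqrt2R)<\sqrt2R$"). Second, you invoke "the hypothesis $L\ne\tan L$", but Theorem~\ref{thm-unstab} carries no such hypothesis; it asserts instability for \emph{every} $L>0$ and $a\in(0,1)$, and the restriction $L\ne\tan L$ belongs to Theorem~\ref{thm-stab}, not here. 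Third, and most substantively, you only gesture at the general-$L$ case ("look among a higher branch \ldots possibly complex $\lambda$"). This is where the paper spends most of its effort, splitting \eqref{uns-ex-key} into five cases on $(L,a)$ using the pole of $f_2$ at $s=a/L$. Your $\cot$ form is actually favorable and would permit a clean, nearly case-free argument once carried out: for $0<L<\pi$, $\cot L<1/L$ holds unconditionally (since $\tan L>L$ on $(0,\pi/2)$ and $\cot L\le 0$ on $[\pi/2,\pi)$), $\omega R\in(0,\pi)$ for all $\lambda\in(0,\sqrt2)$, so $F$ is continuous with $F(0)<0<F(\sqrt2)=\sqrt2$; for $L\ge\pi$, set $\lambda_*=\sqrt2\sqrt{1-\pi^2/L^2}$ so that $\omega(\lambda_*)R=\pi$, note $F$ is continuous on $(\lambda_*,\sqrt2)$ and $F(\lambda)\to-\infty$ as $\lambda\to\lambda_*^+$. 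In either case the intermediate value theorem produces a \emph{real} root $\lambda\in(0,\sqrt2)$, so your hedge about needing complex $\lambda$ is unnecessary. As written, however, the proof is incomplete and contains the sign error above.
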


As illustrated above the  main purpose of this paper  is to add some control term on the dissipative boundary,  $i. e. $ $u_t+ au_{\nu}=b(t)$,   to stabilize those unstable modes.
To reduce the difficulty,  in this paper we only consider the radial case, $i. e.$ both  initial data and boundary conditions  are radial, which  forces solutions  be radial.  It is to be expected, though, that the methods and results of this paper can be extended to the non-radial framework, at considerable technical expense. 
\\

Let us denote by $\mathcal{H}^1$ the space of {\it radial function} pairs 
\begin{equation}
\mathcal{H}^1:= \left\{
\begin{pmatrix}
u\\v
\end{pmatrix}: u\in H^1_{rad}(\Omega), v\in  L^2_{rad}(\Omega)
\right\},
\end{equation}
 and endow it  with the norm
\begin{equation}
\lVert (u, v)\lVert_{\mathcal{H}^1}^2:= \int_{\Omega} \left(|\nabla u|^2+ |v|^2+ |u|^2\right)(x) dx.   
\end{equation} 
The solution of the nonlinear Klein-Gordon equation with boundary control term $b(t)$ and given initial data $(u_0, v_0)$ is a function $u(t, x)\in C([0, T); \mathcal{H}^1)$ that satisfies the equation in the classical  {\it transportation sense}, where $T\in \mathbb{R}\cup \{+\infty\}$ is the blow up time, and throughout we use the boundary condition from before $u_t+ au_{\nu}=b(t)$.  This definition of solution,  introduced by Lions \cite{Lions-Hilbert}, is commonly used in control theory,  see the book by Coron \cite{coron} for an nice introduction to this subject.

The first main result of this paper is the following one concerning open-loop stabilization of  NLKG.
\begin{theorem}\label{thm-stab}
Let $L>0$ such that  $L\neq \tan L$. Let $a\in (0, 1)$.   There exist some  effectively computable constants  $\beta_*>0, C_{\beta_*}>0, \varepsilon_{\beta_*}>0, N_{\beta_*}\in \mathbb{N}$ and smooth functions $\{b_k(t)\}_{k=1}^{N_{\beta}}$ compactly supported on the time interval  $(2, 4)$  such that, for any radial initial state  $(u_0, v_0)^T\in \mathcal{H}^1$ satisfying 
\begin{equation}
\lVert (u_0, v_0)^T- (1, 0)^T\lVert_{\mathcal{H}^1}\leq \varepsilon_{\beta_*},  \notag
\end{equation}
 we are able to find a smooth real control function $b(t)$ as a linear combination of $\{b_k(t)\}_{k=1}^{N_{\beta}}$, 
\begin{equation}
b(t):= \sum_{k=1}^N \tilde l_k(u_0, v_0) b_k(t),   \notag
\end{equation}
with $\tilde l_k(u_0, v_0)\in \mathbb{R}$ depending continuously on $(u_0, v_0)^T\in \mathcal{H}^1$ satisfying
\begin{equation}
 \sum_{k=1}^N |\tilde l_k(u_0, v_0)|\leq 2 C_{\beta_*}\lVert (u_0, v_0)- (1, 0)\lVert_{\mathcal{H}^1},  \notag
\end{equation}
such that the unique  radial  solution of the nonlinear equation 
\begin{gather}
\begin{cases}
\Box u+ u-u^3=0 \textrm{ in } \Omega,      \notag\\
u_t+ au_{\nu}=b(t), \textrm{ on } \partial\Omega    \notag\\
u(0, x)= u_0, \; u_t(0, x)=v_0,  \notag
\end{cases}
\end{gather}
satisfies 
\begin{equation}
\lVert (u(t), u_t(t))^T-(1, 0)^T\lVert_{\mathcal{H}^1}\leq 2C_{\beta_*} e^{-\beta_* t} \lVert (u_0, v_0)^T-(1, 0)^T\lVert_{\mathcal{H}^1}, \; \forall t\geq 0.   \notag
\end{equation}

Moreover, the same conclusion obtains for any $\beta \in (0,  \frac{\sqrt{2}}{2L} \log{\frac{1+a}{1-a}})$ replacing $\beta_*$, provided $a$ does not belong to a certain zero measure set $\mathcal{A}(L)$ (see \eqref{def:AL} for its definition). 
\end{theorem}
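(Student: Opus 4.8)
The plan is to linearize around the equilibrium $(1,0)$, set $u = 1 + v$ so that $v$ solves $\Box v - 2v = 3v^2 + v^3$ with boundary condition $v_t + a v_\nu = b(t)$, and treat the cubic and quadratic terms as a perturbative source. Restricting to radial functions, I would pass to the variable $r$ and use the standard substitution $w = r v$ (on the ball of radius $L/\sqrt 2$, with the $\sqrt 2$ normalization accounting for the coefficient $-2$), which turns the equation into a one-dimensional wave equation $w_{tt} - w_{rr} = (\text{mass term}) + (\text{nonlinearity})$ on an interval, with a Robin-type condition at $r = L/\sqrt 2$ and a regularity condition at $r = 0$. The linear spatial operator $A$ (negative Laplacian minus $2$ with the dissipative boundary condition) has compact resolvent; I would compute its spectrum explicitly. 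Because $a \in (0,1)$, all but finitely many eigenvalues of the first-order generator lie in a half-plane $\{\Re \lambda \leq -\beta'\}$ for some $\beta' $ slightly larger than the target rate $\tfrac{\sqrt 2}{2L}\log\tfrac{1+a}{1-a}$; the condition $L \neq \tan L$ is exactly what guarantees that $0$ is not an eigenvalue and, more importantly, that there is no eigenvalue exactly on the critical line obstructing the sharp rate. This spectral computation, and verifying that the associated semigroup on the stable subspace decays like $e^{-\beta t}$ with the stated constant, is the analytic heart of the first half.

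Next I would split $\mathcal{H}^1$ (the radial subspace) into a finite-dimensional unstable/central part $\mathcal{H}_u$ spanned by the finitely many eigenmodes with $\Re \lambda \geq -\beta$, and a complementary stable part $\mathcal{H}_s$ on which the free linear flow already decays at rate $\beta$. On $\mathcal{H}_s$ no control is needed; on $\mathcal{H}_u$, which is finite-dimensional, I would establish controllability of the projected linear system by the boundary input $b(t)$ — this is a Hautus/Fattorini-type test: one checks that no eigenfunction of the adjoint operator vanishes together with its relevant boundary trace, which again reduces to an explicit transcendental nonvanishing condition covered by $L \neq \tan L$ (and by $a \in (0,1)$). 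Having controllability of the finite-dimensional block, I can pick a fixed basis of controls: solve, for each unstable mode $e_k$, the moment problem producing a smooth $b_k(t)$ supported in $(2,4)$ that steers that mode from a unit initial condition to $0$ at time $4$ while leaving the flow essentially untouched on $[0,2]$; the numbers $\tilde l_k(u_0,v_0)$ are then the (continuous, linear) coordinates of the projection of $(u_0,v_0) - (1,0)$ onto $\mathcal{H}_u$, up to the explicit propagation of the linear flow on $[0,2]$.

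With the linear open-loop control fixed, the final step is a fixed-point / continuity argument to absorb the nonlinearity. I would run the control designed for the linearized equation on the full nonlinear equation, and set up a contraction in the weighted space $\{ v \in C([0,\infty);\mathcal{H}^1) : \sup_t e^{\beta t}\|v(t)\|_{\mathcal{H}^1} < \infty\}$: the Duhamel formula splits into the free decaying evolution on $\mathcal{H}_s$, the controlled finite-dimensional part (which is driven to size $O(\varepsilon)$ and then stays small), and the nonlinear source $3v^2 + v^3$, which in this weighted norm is quadratically small and hence contractive once $\varepsilon_\beta$ is chosen small enough; here one uses that $H^1(\Omega) \hookrightarrow L^6(\Omega)$ in three dimensions controls $v^2, v^3$ in $L^2$, plus the local wellposedness and finite-speed/energy estimates to handle the boundary source term $b(t)$ in the $\mathcal{H}^1$ energy. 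The output of the fixed point is a solution with $\|(u(t),u_t(t)) - (1,0)\|_{\mathcal{H}^1} \leq 2 C_\beta e^{-\beta t}\|(u_0,v_0)-(1,0)\|_{\mathcal{H}^1}$, and continuity of $(u_0,v_0) \mapsto \tilde l_k$ is inherited from the linear projection. The main obstacle I anticipate is not any single step in isolation but making the decay rate genuinely sharp: one must verify that the controlled finite-dimensional part can be made to decay at \emph{any} rate below the explicit threshold (so the bottleneck is really the first stable eigenvalue, governed by $\tfrac{\sqrt 2}{2L}\log\tfrac{1+a}{1-a}$), and that the nonlinear feedback does not degrade this rate — which forces $\beta$ strictly below the threshold and explains the open interval in the statement.
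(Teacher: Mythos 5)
Your plan matches the paper's broad structure (linearize, treat the nonlinearity as a source, finite-dimensional moment problem for $b(t)$, fixed point in a weighted exponential-decay space), but the linear analysis is carried out by a genuinely different route. You split the phase space into a finite-dimensional unstable block $\mathcal{H}_u$ and a complementary stable block $\mathcal{H}_s$, and rely on the free semigroup decaying at rate $\beta$ on $\mathcal{H}_s$. The paper uses no such spectral decomposition. Instead it Laplace-transforms in time, turning the equation into a family of elliptic boundary-value problems indexed by $\omega=\alpha+i\beta$, and imposes compatibility conditions $B(\omega_j)=r_{\omega_j}(h,u_0,v_0)$ at the finitely many poles $\omega_j$ with $\Im\omega_j<\beta$, chosen so that the transform $U(\cdot,\omega)$ extends holomorphically up to the contour $\Im\omega=\beta$. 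Decay then follows by estimating the inverse Fourier integral along that contour, and the analytic heart is an explicit asymptotic expansion of the one-dimensional Green's function $\Gamma(r,s;\omega)$ together with uniform-in-$\alpha$ resolvent bounds (Lemma~\ref{lem-key-es}, via the oscillatory-integral bounds of Lemma~\ref{lem-alpha-osc} and the Hilbert-transform estimate of Lemma~\ref{lem-1-r-s}). That is what makes the constant $C_\beta$ effectively computable, as claimed in the theorem.

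The gap in your argument sits exactly where the two routes diverge. The generator here is non-self-adjoint (dissipative boundary condition), so the spectrum-determined growth condition is not automatic: knowing that all but finitely many eigenvalues lie in $\{\Re\lambda\le-\beta\}$ does not by itself imply that the semigroup restricted to $\mathcal{H}_s$ decays like $e^{-\beta t}$, unless you also establish a Riesz-basis property for the eigenfunctions, or — equivalently, by Gearhart--Pr\"uss — uniform resolvent bounds along $\Re\lambda=-\beta$. Your proposal asserts this decay but supplies no mechanism for it; the paper supplies precisely that mechanism through the resolvent estimates, which are its main technical content. A secondary issue is that the boundary control designed to stabilize $\mathcal{H}_u$ also excites $\mathcal{H}_s$, so ``no control is needed on $\mathcal{H}_s$'' needs more care; the paper handles the boundary source quantitatively in Proposition~\ref{prop-key-bd} by lifting $b(t)$ into an interior source and re-using the same resolvent bounds. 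With those two points supplied (in particular a Riesz-basis or resolvent argument on the stable subspace), your outline would close and would be a legitimate alternative proof.
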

\begin{remark}
The condition $L\neq \tan L$ is assumed to ensure that 0 is not an eigenvalue of the wave operator, and is a limitation of our method for technical reasons.  We  do not know whether it is possible to stabilize the system with the help of nonlinear terms even if the linearized system is not stable, as it is the  case for many other models, for example  phantom tracking for Euler equation \cite{coron-1999-euler-stabilization, 2005-Glass-Euler}, and power series expansion for KdV with critical length \cite{coron-rivas-xiang-kdv-16} etc.
\end{remark}

\begin{remark}
In terms of the distribution of eigenvalues in relation to the asymptotic line $ \frac{\sqrt{2}}{2L} \log{\frac{1+a}{1-a}}$, our result is presumably sharp. Moreover, observing that when $a$ tends to $1^-$ the value of $ \frac{\sqrt{2}}{2L} \log{\frac{1+a}{1-a}}$ tends to $+\infty$; thus in some sense we can understand this result as rapid stabilization,  $i. e.$ exponential stabilization with the decay rate being arbitrarily large.  Indeed, by taking $a=1$  this paper already presents the rapid stabilization.   The case  $a>1$ can be treated similarly  leading to an analogous result. \\
It is noteworthy that the study of the essential spectrum for the damped wave equation in a general geometric setting is more involved. This is related to the stability of the damped wave  equation: by contrast to our situation there is no need to add extra control terms to stabilize the system.  We refer to  the works of Koch--Tataru  \cite{Koch-Tataru},  Lebeau \cite{Lebeau-1996}  and  the references therein in this direction.  
 
\end{remark}
\begin{remark}
We can replace the support $(2, 4)$ by any other compact interval, which only affects the choices of the constants $C_{\beta}, \varepsilon_{\beta}$, and the functions $\{b_k(t)\}_{k=1}^{N_{\beta}}$, but not  the dimension of the control $N_{\beta}$ that we use.
\end{remark}

Notice that Theorem \ref{thm-stab} is an open-loop stabilization result, which is less robust to disturbances compared with closed-loop stabilization for engineering applications and realistic situations.  Generally speaking, it is hard to pass from an  open-loop stabilization result to a closed-loop stabilization result, since the open-loop control that we choose may be non-local in time while closed-loop feedback (except for some time-delay feedbacks) should only depend on the current state and time.   However,  we observe from   Theorem \ref{thm-stab} that  the control is chosen from a finite dimensional space which, in particular, is also compactly supported.  This essential observation makes it possible to derive  from Theorem \ref{thm-stab}  a closed-loop stabilization theorem   by means of time periodic feedback laws. 
\\
For this purpose we need to add some ``observers'', which are introduced in order to \textit{observe} the current state, more precisely,  to determine the value of $\tilde{l}_k$ that appears in Theorem \ref{thm-stab}.  This is a standard, and in many circumstances a necessary trick  for stabilization problems, see for example \cite{coron-1991-adding-integral, Coron-Xiang-2018, 2017-Xiang-SCL}.  

Therefore the state becomes $(u, u_t; l_1, l_2,..., l_{N_{\beta}})\in \mathcal{H}^1\times \mathbb{R}^{N_{\beta}}$, and  one needs to stabilize the following new system 
\begin{gather}
\begin{cases}
\Box u(t, x)+ u(t, x)-u^3(t, x)=0, \; t\in(s, +\infty), x\in \Omega      \notag\\
u_t(t, x)+ au_{\nu}(t, x)=b_f(t), \; t\in(s, +\infty),  x\in \partial\Omega,    \notag\\
\dot l_k(t)=0,  \;\forall k\in \{1, 2,..., N_{\beta}\},  t\geq s, t\in [NT, (N+1)T), N\in \mathbb{Z},\\
l_k(NT)=  \tilde{l}_k(u(NT), u_t(NT)), \; \forall k\in \{1, 2,..., N_{\beta}\},  NT\geq s, N\in \mathbb{Z}, 
\end{cases}
\end{gather}
with the help of some time periodic feedback law $b_f(t)$  that depends  on the state $(u, u_t; l_1, l_2,..., l_{N_{\beta}})$ and time $t$.  In fact, observing  from the last two condition in the preceding formula, one only needs to stabilize $(u, u_t)(t)$.   More precisely, we have the following closed-loop stabilization result.
\begin{theorem}\label{thm-stab-closed}
Let $L>0$ such that  $L\neq \tan L$. 
Let $a\in (0, 1)$.   Take the values of $\beta_*$ and $N_{\beta_*}$ from Theorem  \ref{thm-stab}.  For any $\varepsilon_0>0$ small enough, there exist   effectively computable constants $\tilde C>0, \tilde \varepsilon>0, \tilde T>0$, smooth functions $\{b_k(t)\}_{k=1}^{N_{\beta_*}}$ compactly supported on $(2, 4)$, and $\tilde T$-periodic feedback laws that depends on the value of $(l_1, l_2,..., l_{N_{\beta_*}})(t)$ and time $t$:
\begin{equation}
b_f(t)= b_f\big(l_1(t), l_2(t),..., l_{N_{\beta_*}}(t)\big)(t):= \sum_{k=1}^{N_{\beta_*}} l_k(t) b_k\left(t-[\frac{t}{\tilde T}]\tilde T\right), \forall t\in \mathbb{R},  \notag
\end{equation} 
such that, for any $s\in \mathbb{R}$, for any radial initial state  $\left(u_0, v_0; l_1^0, l_2^0,..., l^0_{N_{\beta_*}}\right)\in \mathcal{H}^1\times \mathbb{R}^{N_{\beta_*}}$ satisfying the smallness condition 
\begin{gather}
\lVert (u_0, v_0)^T- (1, 0)^T\lVert_{\mathcal{H}^1}\leq \tilde \varepsilon,    \notag
\end{gather}
and the compatible condition 
\begin{gather}
l^0_k= \tilde{l}_k(u_0, v_0), \;\forall k\in \{1, 2,..., N_{\beta_*}\},   \notag
\end{gather}
where the continuous function $\tilde{l}_k$ and the smooth functions  $\{b_k(t)\}_{k=1}^{N_{\beta}}$ are chosen directly from  Theorem \ref{thm-stab}, the unique radial solution $(u, u_t; l_1, l_2,..., l_{N_{\beta_*}})(t)$ of the nonlinear equation,
\begin{gather}
\begin{cases}
\Box u(t, x)+ u(t, x)-u^3(t, x)=0, t\in(s, +\infty), x\in \Omega      \notag\\
u_t(t, x)+ au_{\nu}(t, x)=b_f(t),  t\in(s, +\infty),  x\in \partial\Omega,    \notag\\
\dot l_k(t)=0,  \forall k\in \{1, 2,..., N_{\beta_*}\},  t\geq s, t\in [N \tilde T, (N+1)\tilde T), N\in \mathbb{Z},\\
l_k(N\tilde T)=  \tilde{l}_k(u(N\tilde T), u_t(N\tilde T)), \forall k\in \{1, 2,..., N_{\beta_*}\},  N\tilde T\geq s, N\in \mathbb{Z}, \\
u(s, x)= u_0, u_t(s, x)=v_0, \\
l_k(s)= l_k^0, \forall k\in \{1, 2,..., N_{\beta_*}\},
\end{cases}
\end{gather}
satisfies 
\begin{gather}
\lVert (u(t), u_t(t))^T-(1, 0)^T\lVert_{\mathcal{H}^1}\leq \tilde C e^{-(\beta_*-\varepsilon_0)  (t-s)} \lVert (u_0, v_0)-(1, 0)\lVert_{\mathcal{H}^1}, \; \forall t\geq s,   \notag\\
\sum_{k=1}^{N_{\beta_*}} |l_k(t)| \leq 2 \tilde C e^{-(\beta_*-\varepsilon_0) (t-s)} \lVert (u_0, v_0)-(1, 0)\lVert_{\mathcal{H}^1}, \; \forall t\geq s.  \notag
\end{gather}
Moreover, the same conclusion obtains for any $\beta \in (0,  \frac{\sqrt{2}}{2L} \log{\frac{1+a}{1-a}})$ replacing $\beta_*$, provided $a$ does not belong to a certain zero measure set $\mathcal{A}(L)$.
\end{theorem}
\begin{remark}
We do not know whether adding $N_{\beta}$ integral terms is ``\textit{optimal}'', as the controllability and stabilization with reduced control terms is among one of the central problems in control theory, especially for nonlinear systems for which the nonlinearity  may provide plenty of extra information rather than linear systems, see for example \cite{Coron-Lissy} concerning local exact controllability of three dimensional Navier-Stokes equations with only one  controlled component.
\end{remark}

To the best of our knowledge, this appears to be the first attempt to stabilize  multi dimensional unstable focusing dispersive equations  using resolvent estimates, and this stabilization result also shares the advantage of explicit feedback with a sharp decay rate.

We note that under the radial assumption on solutions  the NLKG  can be more or less regarded as a one dimensional wave equation,  for which the understanding of controllability, stability and stabilization  is more complete, to be compared with the more complicated  higher dimensional case. For example local controllability and global controllability even with different type of nonlinearities has been obtained by Zuazua \cite{Zuazua-1990, Zuazua-1993} (see also \cite{coron} for an introductory proof of the local controllability),  exponential  stabilization using  Lyapunov functions is obatined   by Coron-Trélat  \cite{Coron-Trelat-2006} , or by the backstepping method  \cite{Krstic-wave} (this general approach has been applied to different types of models, see for example the heat equation  \cite{2017-Coron-Nguyen-ARMA} and KdV equation \cite{2019-xiang-SICON}).

 Nevertheless, though benefiting from the simplicity of the radial setting,  we do not use any other specific one dimensional structures in this paper, which suggests the possibility to extend things to the multi-dimensional non-radial case.  \\

This paper is structured as follows. In Section \ref{sec-lin} we introduce some general facts concerning the linear inhomogeneous problem as well as our strategy of stabilizing unstable modes.  Section \ref{sec-stab-li}  is the main part of the paper; we prove that under the radiality assumption via explicit resolvent estimates the static equilibrium is unstable (Theorem \ref{thm-unstab}) and that with the help of some control term on its linearized system the solution  will decay exponentially (Theorem \ref{thm-li-key}). This is followed by a section on open-loop stabilization of the nonlinear system concerning Theorem \ref{thm-stab}  as well as a section on closed-loop stabilization concerning Theorem \ref{thm-stab-closed}.  In the end,  in Section \ref{sec-com} we comment on some interesting further questions, and furnish some technical proofs in  Appendix \ref{App-A} and Appendix \ref{App-B}.

\section{Inhomogeneous linear problem and our stabilization strategy}\label{sec-lin}
In this section we introduce our stabilization strategy for  general linear wave equations with potential terms and boundary controls
\begin{gather}
\Box u-V u=h(t, x)  \textrm{ in } \Omega, \label{inl1}\\
(u_t+ a u_{\nu})(t)= b(t)   \textrm{ on } \partial\Omega, \label{inl2}  \\
u(0, x)= u_0, \; u_t(0, x)= v_0, \label{inl3}
\end{gather}
where the potential $V$ is assumed to be radial, bounded and smooth, and $a\in (0, 1)$ throughout. Also, the function $b(t)$ is always assumed to be $C^\infty$ and compactly supported. 

\subsection{Our stabilization strategy}\label{subsec-strategy}
 In this subsection we briefly comment on  our strategy of getting stabilization, and leave the more detailed explanations to Section \ref{subsec:elli}--\ref{subsec-str}.  It is essentially  composed of two parts:
\begin{itemize}
\item Transform the evolution problem into an elliptic one via Fourier transformation in time. This procedure gives some function $U(\omega, x)$ that is well-defined if $-\Im \omega$ is large enough;
\item  Extend the function $U(\omega, x)$ holomorphically to a larger domain with the help of control terms if necessary, and perform resolvent estimates for $U(\omega, x)$.
\end{itemize}

We note that the idea of using the Fourier transform in time has been used for the study of the stability problem of damped wave equations in different settings. Let us mention, for example,  the works by Lagnese \cite{Lagnese}, Burq--Zworski \cite{Burq--Zworski},  Duyckaerts-Miller \cite{Duyckaerts-Miller} and the references therein.  Remark that in most of the literature one deals with stable linearized equations, thus holomorphic extension is always possible, and one mainly concentrates on the resolvent estimates.  Indeed, in this circumstance the damped wave equation is exponentially stable.  However, in our framework, as we shall see later on in Section \ref{thm-unstab}, the damped wave equation is unstable and holomorphic extension is only guaranteed by working with well-chosen controls.    \\

Let us now briefly present the idea of Fourier transform in time.  Let us define the  partial Fourier transform
\begin{equation}\label{def_U0}
U_0(\omega, x):= \int_0^{+\infty}e^{-i\omega t} u(t, x) dt,       
\end{equation}
where $u$ is assumed to be a solution of \eqref{inl1}--\eqref{inl3}.   Straightforward calculation implies that $U_0$ satisfies an elliptic equation 
\begin{gather}
\Delta U_0+ (V+\omega^2)U_0= -H_0 \textrm{ in } \Omega,  \label{U0eq1} \\
i\omega U_0+ aU_{0, \nu}= B_0 \textrm{ on } \partial \Omega,   \label{U0eq2}
\end{gather}
 
Then, at least formally, for $\omega= \alpha+ i\beta$ with some fixed negative valued $\beta$  we can define for $t>0$ 
\begin{equation}
e^{\beta t}u_{\beta}(t, x)= \frac{1}{2\pi} \int_{-\infty}^{+\infty} e^{i\alpha t}U_0(\alpha+i \beta, x)d\alpha=: F(\beta, t, x).   \label{u:def:inverse}
\end{equation}
That the preceding expressions make sense for $\beta$ a negative number of sufficiently large absolute value follows from Lemma \ref{lem-a-es}. Therefore, in this circumstance the preceding  formula is exactly the inverse Fourier transformation, and  the value of $u_{\beta}(t, x)$ coincides the unique solution of the wave equation $u(t, x)$.   \\

If furthermore, we are able {\it{to extend}} the definition of $U_0(\omega, x)$, $F(\beta, t, x)$, to much larger sets of the complex parameter $\omega$, in particular to the region where $\beta>0$, 
and to prove for some $\beta_0>0$ that 
\begin{equation}
\lVert F(\beta_0, t, \cdot)\lVert_{H^1}\leq C(\beta_0), \; \forall t\in [0, +\infty),    \label{F:estimate}
\end{equation}
then we get the required exponential decay of $u$.  

 This requires us to extend $U_0(\omega, x)$ {\it holomorphically} from the region of $\beta$ sufficiently small to the region $\beta\leq \beta_0$.  Note that generally we are not allowed to extend $U_0$ via the definition  \eqref{def_U0}, since the integral may not be well-defined.   Instead we shall perform the extension by solving \eqref{U0eq1}--\eqref{U0eq2}.  We shall see later in subsection \ref{subsec-str} that these equations need not  admit a solution for certain specific values of $\omega$ (namely the poles), unless $H_0$ and $B_0$ satisfy a suitable compatibility condition.  Notice that the value of $B_0$ is directly related to the values of the control term $b(t)$ (see Equation \eqref{ellip-4}), while $H_0$ is directly related to the initial states (see Equation \eqref{ellip-3}).  Our strategy is to exhibit this compatibility condition on $(H_0, B_0)$, or equivalently to find suitable controls $b(t)$,   such that  one can solve $U_0$ holomorphically in the complex region $\Im \omega\leq \beta_0$. Consequently, the function $u_{\beta}(t, x)$ defined in \eqref{u:def:inverse}  is analytically extended to the larger region $\beta\leq \beta_0$, and still coincides  with the unique solution of the wave equation $u(t, x)$.    In the following when there is  no  confusion we shall simply denote $u_{\beta}(t, x)$ by $u(t, x)$.
 
 Finally, we obtain the required estimate \eqref{F:estimate} using resolvent estimates (see subsections \ref{subsec-inh}--\ref{subsec-boum} for details).    \\

\subsection{Relating the stability question to elliptic problems}\label{subsec:elli}
As a simple starting point, we state the following lemma.  Let us define the energy of the system,
\begin{equation}
E(t):= \frac{1}{2}\int_{\Omega} \left(|\partial_t u|^2+ |\nabla u|^2 +|u|^2\right)(t, x) dx     \notag
\end{equation}
and 
\begin{equation}
\tilde{E}(t):=E(t)+ \frac{1}{a}\int_0^t \int_{\partial \Omega}u_t^2(s, x)d\sigma ds.   
\end{equation}
\begin{lemma}\label{lem-a-es}
There exists some  $C=C(V, \Omega)$ such that for any given $(u_0, v_0)^T \in \mathcal{H}_1$,  the unique solution  of \eqref{inl1}--\eqref{inl3} satisfies
\begin{equation}
 \left(\tilde{E}(t)\right)^{\frac{1}{2}}\lesssim e^{Ct}(E(0))^{\frac{1}{2}}+ \int_0^t e^{C(t-s)}\left(\lVert h(s, \cdot)\lVert_{L^2}+ |b(s)| + |b''(s)|\right)ds, \; \forall t\in [0, +\infty). \notag
\end{equation}
\end{lemma}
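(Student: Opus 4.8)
The plan is to prove the a priori estimate of Lemma~\ref{lem-a-es} by a standard energy method, treating the boundary term generated by the dissipative condition carefully. First I would multiply the equation \eqref{inl1} by $u_t$ and integrate over $\Omega$. The term $\int_\Omega \Box u\, u_t = \int_\Omega (u_{tt}-\Delta u) u_t$ produces, after integration by parts in space, the time derivative $\frac12\frac{d}{dt}\int_\Omega (|u_t|^2+|\nabla u|^2)$ together with a boundary contribution $-\int_{\partial\Omega} u_\nu u_t\, d\sigma$. Using the boundary condition \eqref{inl2}, namely $u_\nu = \frac{1}{a}(b(t)-u_t)$, this boundary term becomes $\frac{1}{a}\int_{\partial\Omega}|u_t|^2\, d\sigma - \frac{1}{a}\int_{\partial\Omega} b(t) u_t\, d\sigma$. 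The first of these is exactly the dissipative term that gets absorbed into $\tilde E$ upon integrating in time; the second is a forcing term to be controlled. The potential term $-\int_\Omega V u\, u_t$ and the source term $\int_\Omega h\, u_t$ are handled by Cauchy--Schwarz. To turn this into a bound on $E(t)$ (which also carries $\frac12\int_\Omega|u|^2$) I would additionally use $\frac{d}{dt}\frac12\int_\Omega|u|^2 = \int_\Omega u u_t \le E(t)$.

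Collecting terms, one arrives at a differential inequality of the shape
\begin{equation}
\frac{d}{dt}\tilde E(t) \le C(V,\Omega)\, \tilde E(t) + C\big(\|h(t,\cdot)\|_{L^2}^2 + |b(t)|^2\big) + \text{(boundary forcing)}. \notag
\end{equation}
The boundary forcing $\frac{1}{a}\int_{\partial\Omega} b(t) u_t\, d\sigma$ must be estimated against the dissipative term already present: since $a<1$, a Young inequality $\frac{1}{a}|b(t)|\,|u_t| \le \frac{1}{2a}|u_t|^2 + \frac{1}{2a}|b(t)|^2$ lets one absorb half of the boundary integral $\frac{1}{2a}\int_{\partial\Omega}|u_t|^2$ into the good term coming from the dissipation, leaving $\frac{1}{2a}\int_{\partial\Omega}|b(t)|^2\,d\sigma$, which after using the (radial, hence one-dimensional trace) boundedness of the sphere measure and the regularity of $b$ is bounded by $C|b(t)|^2$. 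Then Grönwall's inequality applied to $\tilde E$ gives
\begin{equation}
\tilde E(t) \le e^{Ct}\tilde E(0) + C\int_0^t e^{C(t-s)}\big(\|h(s,\cdot)\|_{L^2}^2+|b(s)|^2\big)\,ds, \notag
\end{equation}
and since $\tilde E(0)=E(0)$, taking square roots and using $\sqrt{x+y}\le\sqrt x+\sqrt y$ together with $\big(\int_0^t e^{C(t-s)}g(s)^2\,ds\big)^{1/2}\lesssim \int_0^t e^{C(t-s)}g(s)\,ds$ (after adjusting the constant $C$) yields the stated inequality.

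The main obstacle I anticipate is the precise handling of the boundary term $\int_{\partial\Omega} u_\nu u_t\,d\sigma$: one has to be a little careful that substituting the boundary condition produces the \emph{correct sign} so that the dissipative integral genuinely helps rather than needing to be absorbed from the wrong side, and that the cross term $\frac1a\int_{\partial\Omega} b u_t$ can be split so that the part retained in the estimate is controlled solely by $|b(t)|$ — this is where $a\in(0,1)$ enters, ensuring the coefficient on the retained $|u_t|^2$ boundary integral is strictly less than the coefficient $\frac1a$ available from the dissipation. A secondary technical point is justifying these manipulations for the Lions-type (transposition) weak solutions rather than classical ones; this is standard and can be done by a density/regularization argument, approximating $(u_0,v_0,h,b)$ by smooth data, deriving the estimate for smooth solutions, and passing to the limit, so I would only remark on it rather than carry it out in detail.
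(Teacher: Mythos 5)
Your overall plan — multiply by $u_t$, integrate by parts, substitute the boundary condition, Grönwall — is the natural one, and the bookkeeping of the bulk terms and the cancellation of the dissipative term inside $\tilde E$ is correct. However there is a genuine gap in the way you close the boundary term, and as written the argument does not yield the estimate of the lemma.

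The problem is the Young inequality you apply to $\frac{1}{a}\int_{\partial\Omega}b(t)u_t\,d\sigma$. After absorbing the resulting $\frac{1}{2a}\int_{\partial\Omega}|u_t|^2$ into the dissipation, you are left with $|b(t)|^2$ in the differential inequality, and Grönwall then produces a bound involving $\int_0^t e^{C(t-s)}|b(s)|^2\,ds$ \emph{inside} a square root. Your last reduction, namely the claim
\begin{equation}
\Bigl(\int_0^t e^{C(t-s)}g(s)^2\,ds\Bigr)^{1/2}\lesssim\int_0^t e^{C'(t-s)}g(s)\,ds, \notag
\end{equation}
is false: take $g$ supported on $(t-\delta,t)$ with height $1/\delta$; the right side stays $O(1)$ while the left side is $\sim\delta^{-1/2}$. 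So your approach only gives an $L^2$-in-time bound on $b$, which is genuinely weaker near the endpoint $s\approx t$ than the stated $L^1$-in-time bound. The same remark shows that if you had Young-ified the bulk term $\int_\Omega h u_t$ you would have lost the $L^1$-in-time form there as well; keeping it as $\|h\|\sqrt{\tilde E}$ and dividing the differential inequality by $2\sqrt{\tilde E}$ (to get $\frac{d}{dt}\sqrt{\tilde E}\le C\sqrt{\tilde E}+\|h\|$) is what preserves the $L^1$ structure. The trouble is that no analogous trick saves the boundary cross term, because the trace of $u_t$ is not pointwise-in-time controlled by $\tilde E(t)$, only square-integrated in time; this is precisely the hidden-regularity feature of the problem. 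A minor additional slip: the absorption $\frac{1}{2a}<\frac{1}{a}$ holds for every $a>0$, so $a\in(0,1)$ is not where that inequality is used.

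The paper avoids this altogether. It first treats the case $b\equiv 0$, where the dissipative term cancels cleanly and one gets $\frac{d}{dt}\tilde E=\int_\Omega\bigl((1+V)u u_t+u_t h\bigr)$, hence $\frac{d}{dt}\sqrt{\tilde E}\le C\sqrt{\tilde E}+\|h\|_{L^2}$, and Grönwall gives the $L^1$-in-time form directly. For $b\not\equiv 0$ it substitutes $u=v+g(x)b(t)$ with a smooth radial lift $g$ satisfying $g(L)=0$, $g'(L)=1/a$; then $v$ satisfies the homogeneous boundary condition and a new source term involving $b$ (and its derivatives), and one invokes the $b=0$ case. This reduction is the step your proposal is missing, and it is what allows a bound linear in $|b(s)|$ under the time integral rather than quadratic.
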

\begin{proof}
At first we consider the case $b(t)=0$.
Direct calculation yields 
\begin{equation}
\frac{d}{dt}\tilde{E}(t)= \int_{\Omega}\left((1+V)u_tu+ u_t h\right)dx\leq C \tilde{E}(t)+\sqrt{\tilde{E}(t)} \lVert h(s, \cdot)\lVert_{L^2(\Omega)},    \notag
\end{equation}
thus 
\begin{equation}
\frac{d}{dt}\sqrt{\tilde{E}(t)}\leq C \sqrt{\tilde{E}(t)}+ \lVert h(s, \cdot)\lVert_{L^2(\Omega)},     \notag
\end{equation}
which implies the lemma via Gronwall's inequality.

As for the case $b(t)\neq 0$, by taking the difference of $u$ and $g(x)b(t)$ with some smooth function $g(x)$ satisfying $g(L)=0, g_x(L)=a^{-1}$ we transform the term $b(t)$ into the source term, which can then be handled by the above inhomogeneous free boundary control case. This concludes the required estimate.
\end{proof}
\begin{remark}
Observe from the energy estimate that the boundary trace has temporal derivative bounded in the $L^2$ sense, while the usual trace formula only provides its continuity. This is indeed a hidden inequality that comes from the boundary value problems, and sometimes such a kind of hidden inequality is the key to prove controllability results (see for example \cite{rosier97, Krieger-Xiang-kdv} for KdV).  Moreover, we shall also benefit from this trace estimate in our stabilization problem.
\end{remark}

In order to  take advantage of the inverse Fourier transformation, we need to  extend $u(t)$ by 0 on $t<0$ via a smooth truncation. 
Let $\chi\in C^{\infty}(\mathbb{R})$ satisfies $\chi(t)=0$ for $t\leq 1$, and $\chi(t)=1$ for $t\geq 2$. Define  $w:= \chi u$, it satisfies
 \begin{gather}
 \Box w- Vw= \chi_{tt} u+ 2\chi_t u_t+ \chi  h= h_0  \textrm{ in }  \Omega, \label{wave-op-1} \\
 w_t+ aw_{\nu}= \chi b+ \chi_t u= b_0  \textrm{ on } \partial \Omega, \label{wave-op-2}\\
 w(0, x)=w_t(0, x)=0. \label{wave-op-3}
 \end{gather}
Let us assume that the control $b(t)$ that we will choose later on satisfies that 
\begin{equation}\label{assum-b-02}
\textrm{ supp } b(t)\Subset (2, +\infty).
\end{equation} 
Thanks to this assumption, we know that $u(t, x)|_{t\in (0, 2)}$  is uniquely determined by $(u_0, v_0)(x)$ and $h(t, x)|_{t\in (0, 2)}$. Therefore, the boundary term $b_0(t)|_{t\in (0, 2)}$  is given by   $\chi_t u(t, x)|_{t\in (0, 2)}$, and  the source term $h_0(t, x)$ is fixed irrespective of the choice of $b(t)$.  Moreover, since the energy and the trace  of $u(t)$ are bounded by 
\begin{align*}
E(t)&\lesssim E(0)+ \left(\int_0^t \lVert h(s, \cdot)\lVert_{L^2}ds\right)^2, \forall t\in (0, 2),\\
|u(t, L)|^2&\lesssim E(0)+ \left(\int_0^t \lVert h(s, \cdot)\lVert_{L^2}ds\right)^2, \forall t\in (0, 2),\\
\int_0^t u_t^2(s, L)ds&\lesssim E(0)+ \left(\int_0^t\lVert h(s, \cdot)\lVert_{L^2}ds\right)^2, \forall t\in (0, 2),
\end{align*}
we know that
\begin{align*}
\lVert h_0(t)\lVert_{L^2}^2\lesssim \lVert h(t)\lVert_{L^2}^2+ \lVert u_0\lVert_{H^1}^2+  \lVert v_0\lVert_{L^2}^2+ \left(\int_0^t \lVert h(s, \cdot)\lVert_{L^2}ds\right)^2, \forall t\in (0, 2).
\end{align*}

It suffices to stabilize $w= \chi u$ via a good choice of $b_0(t)$ keeping in mind that on the time interval $[0, 2]$  this function is given by $w(t)= \chi'(t)u(t, L)$. 
\\

By defining 
\begin{equation}\label{eq:Uomega}
U(\omega, x):= \int_0^{+\infty}e^{-i\omega t} w(t, x) dt= \int_{-\infty}^{+\infty}e^{-i\omega t} w(t, x) dt,  
\end{equation}
thanks to Lemma \ref{lem-a-es}, we know that for any complex number $\omega$ satisfying  $\Im  \omega< -C(V, \Omega)$  both $U_0(\omega, x)$ and $U(\omega, x)$ are well-defined and belong to the  $H^1$ space.  Thanks to the inverse Fourier transform,  for $\omega= \alpha+ i\beta$ satisfying $\beta\leq -C(V, \Omega)$, the function $w(t, x)$ satisfies
\begin{align} \label{four-inverse}
e^{\beta t} w(t, x)&= \frac{1}{2\pi} \int_{-\infty}^{+\infty} e^{i\alpha t}U(x, \alpha+i\beta)d\alpha,\\
e^{\beta t} w_t(t, x)&= \frac{1}{2\pi} \int_{-\infty}^{+\infty} (i\alpha- \beta) e^{i\alpha t}U(x, \alpha+i\beta)d\alpha. \label{four-inverse2}
\end{align}
 In the following if there is no confusion we will call $w(t)$ (or sometimes $u(t)$) the function corresponding to $U(\omega)$.
Then, we  derive the following relations via integration by parts:
\begin{align*}
\int_0^{+\infty}e^{-i\omega t}w_{tt}dt= e^{-i\omega t}&w_t|_{0}^{+\infty}+ i\omega \int_0^{+\infty}e^{-i\omega t}w_{t}dt=-\omega^2 U(\omega, x),\\
aU_{\nu}=a\int_0^{+\infty} e^{-i\omega t}w_{\nu}(t, L)dt&=\int_0^{+\infty} e^{-i\omega t} (-w_t(t, L)+ b_0(t))dt\\
&= -i\omega U+ \int_0^{+\infty}e^{-i\omega t} b_0(t) dt.
\end{align*}
All of these relations are valid as long as $\Im  \omega< -C(V, \Omega)$.
In that same region for the parameter value $\omega$, direct calculation shows that $U$ can be characterised by an elliptic boundary value problem: 
\begin{gather}
\Delta U+ (V+\omega^2)U= -H \textrm{ in } \Omega,    \label{ellip-1}\\
i\omega U+ aU_{\nu}= B \textrm{ on } \partial \Omega,   \label{ellip-2}
\end{gather}
where $H$ and  $B$ are the Fourier transforms of $h_0$ and $b_0$ respectively:
\begin{gather}
H(\omega, x):= \int_0^{+\infty}e^{-i\omega t} h_0(t, x) dt, \label{ellip-3}\\
 B(\omega):=  \int_0^{+\infty}e^{-i\omega t} b_0(t) dt.  \label{ellip-4} 
\end{gather}
The preceding elliptic  boundary value problem  is  to be understood in the following variational sense:  $U\in H^1(\Omega)$ is a solution if for every $W\in H^1(\Omega)$ we have that 
\begin{equation}\label{eq:sesqform}
\int_{\Omega}\nabla U \nabla \overline{W}dx-\int_{\Omega}(V+\omega^2)U \overline{W}dx+ \frac{i\omega}{a}\int_{\partial \Omega}U \overline{W}d\sigma= \int_{\Omega}H \overline{W}dx+ \frac{1}{a}\int_{\partial \Omega}B\overline{W}d\sigma.  
\end{equation}

Consider now the unbounded operator $P_V(\omega)$ defined as follows,
\begin{align*}
P_V(\omega): D(P_V(\omega))\subset L^2_{rad}(\Omega)&\rightarrow L^2_{rad}(\Omega), \\
U&\mapsto (\Delta + V+\omega^2)U,   \notag
\end{align*}
with domain 
\begin{equation}
D(P_V(\omega)):= \{U\in H^2_{rad}(\Omega): i\omega U+ aU_{\nu}= 0\}. \notag
\end{equation}
 As in \cite[page 174]{Lagnese},  $P_V(\omega)$ is an $m$-sectorial operator associated with the sesquilinear form on the left hand side of \eqref{eq:sesqform}, which is sectorial for any $\omega\in \mathbb{C}$ according to  the following Lemma. Its proof can be found in Appendix \ref{App-A}.
\begin{lemma}\label{lem:sesqsect} For any $\omega\in \mathbb{C}$, we define the sesquilinear form $p_{\omega}$ as follows,
\begin{align*}
p_{\omega}: H_{rad}^1(\Omega)\times H_{rad}^1(\Omega)&\rightarrow \mathbb{C}, \\
(U, W)&\mapsto \int_{\Omega}\nabla U \nabla \overline{W}dx-\int_{\Omega}(V+\omega^2)U \overline{W}dx+ \frac{i\omega}{a}\int_{\partial \Omega}U \overline{W}d\sigma,
\end{align*}
where the same assumptions on the potential $V$ as before are in force. Then the sesquilinear form  $p_{\omega}$ is sectorial. 

\end{lemma}

We conclude (see e.g. \cite{Kato-book}) that the operator valued function $P_V(\omega)$ is {\it{resolvent-holomorphic}}, which means that for any $\lambda\in \mathbb{C}$, the operator valued function 
\[
\big(\lambda - P_V(\omega)\big)^{-1}
\]
is holomorphic in a sufficiently small neighborhood of any $\omega_0\in \mathbb{C}$ with the property that $\lambda\in \rho(\omega_0)$. In particular, this holds for $\lambda = 0$. 
\\
The resolvent being a compact operator (see. e. g. \cite{Lagnese}), we infer that the non-invertibility of $P_V(\omega)$ is equivalent to the existence of a non-trivial function in its kernel. The elementary Lemma~\ref{lem-underline} below (and proved in the appendix) implies that all such values of $\omega$ in the lower complex half plane for which $P_V(\omega)$ is not invertible lie on the imaginary axis,  and standard theory implies that they form a discrete set there. 
\begin{defi}\label{def:poles} 
We call $\omega\in \mathbb{C}$ a pole, provided $P_V(\omega)$  is not boundedly invertible on $L^2(\Omega)$. If $\omega$ is not a pole, we call it regular.   Moreover, a pole $\omega_0\in \mathbb{C}$ is called of order $n$ if  the operator valued function $(\omega- \omega_0)^n\big(P_V(\omega)\big)^{-1}$ is holomorphic around $\omega_0$ and $(\omega- \omega_0)^{n-1}\big(P_V(\omega)\big)^{-1}$ is not holomorphic around $\omega_0$.
\end{defi}

As we shall see later on, the first inequality of Lemma~\ref{lem-underline} implies that all the poles {\it{in the lower half plane}} are in fact simple(of order one).

\begin{remark}\label{rmk-spec}
Let us remark here that the poles coincide with the eigenvalues of the following unbounded operator $\mathcal{A}$ defined on the Hilbert space $\mathcal{H}^1$, which is the infinitesimal generator of the semigroup associated to the damped wave equation: 
\begin{align*}
\mathcal{A}: D(\mathcal{A})\subset \mathcal{H}^1&\rightarrow \mathcal{H}^1,\\
\begin{pmatrix}
u\\v
\end{pmatrix}
&\mapsto 
\begin{pmatrix}
0 & 1\\
\Delta+ V & 0
\end{pmatrix}
\begin{pmatrix}
u\\v
\end{pmatrix},
\end{align*}
with 
\begin{gather*}
D(\mathcal{A}):=  \left\{\begin{pmatrix}
u\\v
\end{pmatrix}
\in H^2_{rad}(\Omega)\times H^1_{rad}(\Omega): v+ a u_{\nu}= 0 \textrm{ on } \partial \Omega \right\}.
\end{gather*}
Since the operator $\mathcal{A}$ has compact resolvent, its spectrum reduces to the set of eigenvalues. 
Suppose that for some $\omega\in \mathbb{C}$ and $(w, w_t)^T\in H^2(\Omega)\times H^1(\Omega)$ there is 
\begin{equation*}
\mathcal{A} \begin{pmatrix}
w\\w_t
\end{pmatrix}= 
\omega \begin{pmatrix}
w\\w_t
\end{pmatrix},
\end{equation*}
namely, 
\begin{gather*}
\dot{w}= w_t= i\omega w  \textrm{ in } \Omega, \\
\dot{w_t}= w_{tt}=\Delta w+ Vw= i\omega w_t  \textrm{ in } \Omega,\\
w_t+ aw_{\nu}=0 \textrm{ on } \partial \Omega,
\end{gather*} 
then simple calculation yields
\begin{gather*}
\Delta w+ Vw+ \omega^2 w=0  \textrm{ in } \Omega,\\
i\omega w+ aw_{\nu}=0 \textrm{ on } \partial \Omega.
\end{gather*}
This observation  automatically gives the criterion for the stability of the linearized  equation: stable if and only if there is no pole in the lower half  plane.
\end{remark}

\subsection{Holomorphic extension.}\label{subsec-str}
From the preceding section, assuming that the function $w$ satisfies \eqref{wave-op-1} - \eqref{wave-op-3}, for some  $b_0(t)$ compactly supported and $h_0$ bounded\footnote{In fact later we make much stronger exponential decay assumptions on $h_0$.} in $H^1(\Omega)$, say, we know that, at least for $\Im \omega< -C(V, \Omega)$ the function$\big(U(\omega, x), H(\omega, x), B(\omega, x)\big)$, defined via \eqref{eq:Uomega}, \eqref{ellip-3}, \eqref{ellip-4}, respectively, is well-defined in $H^1\times H^1\times \mathbb{C}$, and holomorphic with respect to $\omega$, and its components solve \eqref{ellip-1}--\eqref{ellip-2}.  As illustrated in Section \ref{subsec-strategy}, the relations \eqref{four-inverse}--\eqref{ellip-4} remain valid for $\beta\leq \beta_0$ provided that  there is a holomorphic (with respect to $\omega$) function $U(\omega, x)$ solving  \eqref{ellip-1}-\eqref{ellip-2} in the region $\Im \omega\leq \beta_0$.      Now we intend to holomorphically extend the solution of this elliptic equation to more general complex values $\omega$, and more specifically, we strive to extend it up to the line $\Im \omega=\beta_0$ with some $\beta_0$ positive. This will imply the exponential stability of the system.

However, it is not always possible to analytically extend the solution of the elliptic problem \eqref{ellip-1}--\eqref{ellip-2}, due to the presence of poles.  Indeed,  suppose that  $\omega_0$ is a pole, $i. e. $ the preceding operator $P_V(\omega_0)$ is not invertible, and   there exists some  non-trivial  solution $U_1$ of the following equation, 
 \begin{gather*}
\Delta U_1+ (V+\omega_0^2)U_1= 0  \textrm{ in } \Omega,\\
i\omega_0 U_1+ aU_{1\nu}= 0 \textrm{ on } \partial \Omega.
\end{gather*}
The equations \eqref{ellip-1}--\eqref{ellip-2} may not even admit a solution at this pole $\omega_0$. Indeed,  if  $W$ is a radial function on $\Omega$ satisfying 
\begin{equation}
i \omega W+ a W_{\nu}= B \textrm{ on } \partial \Omega,
\end{equation}
then $U$ being a solution of \eqref{ellip-1}--\eqref{ellip-2} is equivalent to $X:= U-W$ being a solution of 
\begin{gather}
\Delta X+ (V+\omega_0^2)X= Y \textrm{ in } \Omega,    \\
i\omega_0 X+ a X_{\nu}= 0 \textrm{ on } \partial \Omega, \\
Y:= -H- \Delta W+ (V+\omega_0^2)W.
\end{gather}
The preceding equation need not admit a solution since $P_V(\omega_0)$ is not invertible.   On the other hand, there is a subset $\mathcal{I}(\omega_0)$ of $L^2_{rad}(\Omega)$ such that for any $Y$ belonging to $\mathcal{I}(\omega_0)$, the preceding equation admits a solution $X$ (but the solution will not be unique).\\

Observe from the preceding equation that  $W$ only depends on the value of $b(t)$ (which determines $B$), which is exactly the control term.  Therefore it is natural and reasonable to ask whether there is a good choice of $b(t)$ such that for any pole $\omega_0$  the  above defined  function $Y$ takes its value from the set $\mathcal{I}(\omega_0)$.  This guarantees the existence of a solution for Equations \eqref{ellip-1}--\eqref{ellip-2}  for any pole $\omega_0$. In fact, the precise condition required is given by (iv) in the {\bf{Assumption 1}} on $b$ below. It remains to show that under this assumption there is a holomorphic extension for $U(\omega, x)$ beyond any pole. This will be done in Lemma \ref{lem-holoextension}.  Note that the motivation for (iv) below is that if $\omega_0$ is a pole, then for any given radial function $H$,  all  radial $H^1$-solutions $U$  of the inhomogenous problem 
\begin{equation}\label{ob--1}
\Delta U+ (V+\omega_0^2)U= -H, \textrm{ in } \Omega,
\end{equation}
have the same boundary value
\begin{equation}\label{ob--2}
i\omega_0 U+ aU_{\nu} \textrm{ on }\partial \Omega.   
\end{equation}

We now specify the precise technical assumptions we shall make on $b$. An actual construction of a function $b$ satisfying these conditions, and in the context of a nonlinear iterative scheme, will be accomplished in Lemma~\ref{lem:bconstruction}.\\

\noindent {\bf Assumption 1:} 
We say that a function $b(t)$ satisfies this assumption if the following conditions  on $b_0(t)$ defined as $\chi b+ \chi_t u$ hold: 
\begin{itemize}
\item[(i)] Compatibility condition:
\begin{equation*}
b_0(t)= \chi'(t)u(t, L) \textrm{ on } [0, 2].
\end{equation*} 
(Remark:  this condition comes from the assumption \eqref{assum-b-02} on $b(t)$).
\item[(ii)] Real valued condition: $b_0(t)$ is \textit{real} valed. \\
(Remark: notice that for any pole $\omega= \alpha+ i\beta$ with $\beta>0$ the value of $\alpha$ is not necessarily  0, which may cause some difficulty for finding real controls $b$, but this is resolved by the fact that  $\omega$ and $-\bar \omega$ appear in pairs).
\item[(iii)]  Support property: $b_0(t)$ is compactly supported \\ 
(Remark: this condition is imposed  to ensure  that $B$ is always defined, as well as the decay).
\item[(iv)]  Compatibility on poles: for any simple pole $\omega_j$ satisfying $\Im \omega_j<\beta$, we have that $B(\omega_j)$ coincides with the value of $i\omega_j U+ aU_{\nu}$ for any one (and hence for all solutions according to the observation on  \eqref{ob--1}--\eqref{ob--2}) of the  radial $H^1$-solutions $U$ of the elliptic equation
\begin{equation}
\Delta U+ (V+\omega_j^2)U= -H(\omega_j)  \textrm{ in } \Omega. \notag
\end{equation} 
\end{itemize}
Indeed, once a function $b_0(t)$ that satisfies such four properties  is chosen, one shall select $b(t)$ as follows
\begin{equation}
b(t):= 
\begin{cases}
0, \textrm{ if } t\in (0, 2), \\
b_0(t), \textrm{ if } t\in [2, +\infty).
\end{cases}
\end{equation}
 \\
 
Below we shall see that if we assume that there is no pole on the line $\Im \omega=\beta_0$ and the above assumptions hold, and furthermore all poles are simple,  then the solution $U = U(\omega, x)$ of the above elliptic problem can be holomorphically extended to a neighbourhood of the domain $\Im \omega\leq \beta_0$. Again, we emphasize   that when $\omega= \omega_j$ is a pole, the problem
 \begin{gather*}
\Delta U_1+ (V+\omega_j^2)U_1= -H(\omega_j) \textrm{ in } \Omega,\\
i\omega_j U_1+ aU_{1\nu}= 0 \textrm{ on } \partial \Omega
\end{gather*}
may not  admit any solution. This can also be compared to a similar issue for the simple ODE model: 
\begin{equation}
u_{xx}=f(x), 
u(0)=0, u(\pi)=0,    \notag
\end{equation}
which does not necessarily admit a $C^2$ solution. The preceding strategy shall be rendered rigorous below in Lemma \ref{lem-holoextension}, and the precise choice of the control term  $b_0$ that satisfies Assumption 1 shall be found  in Section \ref{sec:constructionfeedback}.\\

 We now present the details, starting with some properties concerning the kernel of $P_V(\omega_0)$:  any potential pole that lies in the lower half plane can only appear on the imaginary axis, moreover, these poles cannot accumulate in this region, and are simple. The following lemmas closely mimic ones in \cite{Lagnese}, and the proofs can be found in Appendix \ref{App-A}:
\begin{lemma}\label{lem-underline}
The point $\omega= \alpha+ i\beta$ is regular  if  $\beta\leq 0, \alpha\neq 0$.  Furthermore, for $\alpha\neq 0$ and $\beta\in (-C(V, \Omega), 0)$ the unique solution of 
\begin{gather}
\Delta U+ (V+\omega^2)U= -H \textrm{ in } \Omega, \label{eq-lem-1}\\
i\omega U+ aU_{\nu}= 0 \textrm{ on } \partial \Omega, \label{eq-lem-2}
\end{gather}
satisfies
\begin{equation}
\lVert U \lVert_{L^2(\Omega)}^2+|\beta|^{-1} \lVert U \lVert_{L^2(\partial\Omega)}^2\lesssim \frac{1}{|\alpha\beta|^2}\lVert H \lVert_{L^2(\Omega)}^2,    \notag
\end{equation}
and 
\begin{equation}
\lVert U\lVert_{H^1(\Omega)}+ \lVert U \lVert_{L^2(\partial\Omega)}\lesssim \lVert U\lVert_{L^2(\Omega)}.    \notag
\end{equation}     
\end{lemma}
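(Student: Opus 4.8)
The plan is to exploit the dissipative structure of the boundary condition, testing the elliptic equation \eqref{eq-lem-1} against $\overline{U}$ itself and tracking real and imaginary parts separately. Writing $\omega = \alpha + i\beta$, multiply \eqref{eq-lem-1} by $\overline{U}$, integrate over $\Omega$, and integrate by parts to produce the identity
\begin{equation}
\int_\Omega |\nabla U|^2 \, dx - \int_\Omega (V + \omega^2)|U|^2 \, dx + \frac{i\omega}{a}\int_{\partial\Omega}|U|^2 \, d\sigma = \int_\Omega H\overline{U}\, dx, \notag
\end{equation}
using \eqref{eq-lem-2} to replace $U_\nu$ on the boundary by $-\tfrac{i\omega}{a}U$. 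Now separate: the imaginary part of $\omega^2 = \alpha^2 - \beta^2 + 2i\alpha\beta$ is $2\alpha\beta$, and the imaginary part of $i\omega/a$ is $\alpha/a$. Taking imaginary parts of the identity gives
\begin{equation}
-2\alpha\beta \int_\Omega |U|^2\, dx + \frac{\alpha}{a}\int_{\partial\Omega}|U|^2\, d\sigma = \Im \int_\Omega H\overline{U}\, dx. \notag
\end{equation}
When $H = 0$ and $\alpha \neq 0$, $\beta \le 0$, both terms on the left have the same sign (recall $a>0$), so each must vanish; hence $U \equiv 0$ on $\Omega$, proving triviality of the root space. For the quantitative bound, keep $H \neq 0$: from the displayed imaginary-part identity one reads off $|\alpha\beta|\,\lVert U\rVert_{L^2(\Omega)}^2 + |\alpha|\,\lVert U\rVert_{L^2(\partial\Omega)}^2 \lesssim \lVert H\rVert_{L^2(\Omega)}\lVert U\rVert_{L^2(\Omega)}$, and absorbing a factor of $\lVert U\rVert_{L^2(\Omega)}$ by Young's inequality (dividing by $|\alpha\beta|$) yields $\lVert U\rVert_{L^2(\Omega)}^2 \lesssim \tfrac{1}{|\alpha\beta|^2}\lVert H\rVert_{L^2}^2$; one then needs to sharpen this to the claimed $\tfrac{1}{|\alpha\beta|}\lVert H\rVert_{L^2}^2$, which I expect comes from also using the real part of the identity (which controls $\lVert\nabla U\rVert_{L^2}^2$, hence $\lVert U\rVert_{L^2}^2$ via Poincaré-type or the $|U|^2$ term, in terms of $\lVert H\rVert_{L^2}\lVert U\rVert_{L^2}$) and combining the two; the boundary term bound follows in parallel.

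For the second assertion — the estimate $\lVert U\rVert_{H^1(\Omega)} + \lVert U\rVert_{L^2(\partial\Omega)}^2 \lesssim \lVert U\rVert_{L^2(\Omega)}$ for $\beta \in (-C,0)$ — the idea is again to use the real part of the tested identity. The real part of $\omega^2$ is $\alpha^2 - \beta^2$ and the real part of $i\omega/a$ is $-\beta/a > 0$ (since $\beta < 0$), so
\begin{equation}
\int_\Omega |\nabla U|^2\, dx - \frac{\beta}{a}\int_{\partial\Omega}|U|^2\, d\sigma = \int_\Omega (V + \alpha^2 - \beta^2)|U|^2\, dx + \Re\int_\Omega H\overline{U}\, dx. \notag
\end{equation}
Since $\beta \in (-C,0)$ is bounded and $V$ is bounded, the right-hand side (with $H=0$) is $\lesssim (1 + \alpha^2)\lVert U\rVert_{L^2}^2$; but one must control the $\alpha^2$ factor. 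Here I would invoke the first part of the lemma (or the region under consideration, where $\alpha$ is effectively bounded in the relevant range — this needs care, presumably the statement is applied only where $\alpha$ ranges over a bounded set or one re-derives an $\alpha^2\lVert U\rVert_{L^2}^2$ bound from the imaginary-part identity combined with the $H$ term), concluding $\lVert\nabla U\rVert_{L^2}^2 + \lVert U\rVert_{L^2(\partial\Omega)}^2 \lesssim \lVert U\rVert_{L^2(\Omega)}^2$, and then $\lVert U\rVert_{H^1}^2 = \lVert\nabla U\rVert_{L^2}^2 + \lVert U\rVert_{L^2}^2 \lesssim \lVert U\rVert_{L^2(\Omega)}^2$.

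The main obstacle I anticipate is bookkeeping the $\alpha$-dependence correctly so as to land exactly on the stated powers $\tfrac{1}{|\alpha\beta|}$ and to make sense of the second estimate uniformly — in particular, clarifying in which regime of $\alpha$ (bounded, or with the $\alpha^2\lVert U\rVert^2$ term reabsorbed from the imaginary-part relation) the $H^1 \lesssim L^2$ bound is meant to hold. Everything else is the standard dissipative energy identity; the delicate point is the sharp constant, which is precisely why the lemma is flagged as mimicking the computations in \cite{Lagnese}.
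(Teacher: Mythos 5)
Your overall strategy — multiply \eqref{eq-lem-1} by $\overline{U}$, integrate by parts using \eqref{eq-lem-2}, and split into real and imaginary parts — is exactly what the paper does in Appendix~\ref{App-A}. The imaginary-part argument for the triviality of the root space and for the $L^2$ bound is essentially correct, but there are some genuine gaps in the second half and one point to clean up in the first.

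For the triviality when $\beta=0$ and $\alpha\neq 0$, your imaginary-part identity
\begin{equation}
-2\alpha\beta\int_\Omega|U|^2\,dx+\frac{\alpha}{a}\int_{\partial\Omega}|U|^2\,d\sigma=0
\nonumber
\end{equation}
degenerates: the volume term vanishes and you only obtain $U|_{\partial\Omega}=0$. Combined with $i\omega U+aU_\nu=0$ this gives $U_\nu|_{\partial\Omega}=0$ as well, but to conclude $U\equiv 0$ in $\Omega$ one must invoke a unique-continuation argument for $\Delta+V+\alpha^2$. The paper treats the case $\beta=0$ separately for exactly this reason; your proposal quietly assumes both terms are strictly positive, which fails at $\beta=0$.

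The more serious gap is in the second assertion. The paper restricts there to \emph{purely imaginary} $\omega=i\beta$ (i.e., $\alpha=0$), which is the only case not already settled by the first part and the only case relevant to the poles in Lemma~\ref{lem-fin}. With $\alpha=0$ the $\alpha^2\lVert U\rVert_{L^2}^2$ term you were worried about is simply absent, and since $\beta\in(-C,0)$ and $V$ are bounded, the real-part identity immediately gives
\begin{equation}
\lVert\nabla U\rVert_{L^2(\Omega)}^2-\frac{\beta}{a}\lVert U\rVert_{L^2(\partial\Omega)}^2\lesssim \lVert U\rVert_{L^2(\Omega)}^2,
\nonumber
\end{equation}
hence $\lVert U\rVert_{H^1}\lesssim\lVert U\rVert_{L^2}$. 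Your speculation about reabsorbing $\alpha^2$ from the imaginary-part identity is not what is needed; the resolution is that $\alpha=0$.

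You also do not address the boundary estimate in the second assertion. The displayed real-part identity only controls $|\beta|\,\lVert U\rVert_{L^2(\partial\Omega)}^2\lesssim\lVert U\rVert_{L^2}^2$, which degenerates as $\beta\to 0^-$ (the paper itself remarks on this). The missing ingredient is the radial one-dimensional embedding: $rU(r)\in H^1_r(0,L)\hookrightarrow C([0,L])$, so
\begin{equation}
|L\,U(L)|\lesssim\lVert rU\rVert_{H^1_r(0,L)}\lesssim\lVert U\rVert_{H^1(\Omega)}\lesssim\lVert U\rVert_{L^2(\Omega)},
\nonumber
\end{equation}
which yields the boundary bound without the $1/|\beta|$ loss. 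This radial Sobolev step is entirely absent from your proposal and is not replaceable by the energy identity alone.

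Finally, you correctly flagged the power mismatch. From the imaginary-part identity plus Cauchy--Schwarz one gets $2|\alpha\beta|\,\lVert U\rVert_{L^2}^2\le\lVert H\rVert_{L^2}\lVert U\rVert_{L^2}$, i.e., $\lVert U\rVert_{L^2}\le\tfrac{1}{2|\alpha\beta|}\lVert H\rVert_{L^2}$; squared, this is a $|\alpha\beta|^{-2}$ bound, and the boundary term comes out as $|\alpha|^{-2}|\beta|^{-1}$. The paper's own Appendix~\ref{App-A} derives precisely these powers, so the $|\alpha\beta|^{-1}$ in the lemma statement looks like a slip rather than something to be recovered from the real part; the paper does not use the real-part identity to sharpen it.
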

We also record the following basic
\begin{lemma}\label{lem-fin}
There are finitely many poles $\omega_j= i\beta_j$ with $-\beta_j\in (0, C(V, \Omega))$, and they  are simple poles. 
\end{lemma}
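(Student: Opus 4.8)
\noindent\emph{Proof plan.} The plan is to realize the poles as the spectrum of a single operator with compact resolvent, so that discreteness — and hence finiteness on a bounded set — comes for free. Recall from Remark~\ref{rmk-spec} that $\omega$ is a pole precisely when $i\omega$ is an eigenvalue of the first order wave operator $\calA(w,w_t):=(w_t,\ \Delta w+Vw)$ on $\calH^1$, with $\dom(\calA)$ consisting of those $(w,w_t)\in\calH^1$ for which $w_t\in H^1(\Omega)$, $\Delta w\in L^2(\Omega)$ and $w_t+aw_\nu=0$ on $\partial\Omega$. I would first check that $\calA$ has compact resolvent: for $(w,w_t)\in\dom(\calA)$ the trace of $w_\nu=-a^{-1}w_t$ lies in $H^{1/2}(\partial\Omega)$, so elliptic regularity for the Neumann problem on the ball $\Omega$ gives $w\in H^2(\Omega)$; hence $\dom(\calA)$ embeds continuously into $H^2(\Omega)\times H^1(\Omega)$, which by Rellich's theorem embeds compactly into $\calH^1=H^1(\Omega)\times L^2(\Omega)$. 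Alternatively, one can stay inside the elliptic formulation of Section~\ref{sec-lin}: rewriting the variational problem \eqref{ellip-1}--\eqref{ellip-2} as $U-S(\omega)U=(\textrm{data})$, where $S(\omega):H^1(\Omega)\to H^1(\Omega)$ is the operator representing the form $\int_\Omega(1+V+\omega^2)U\overline W\,dx-\frac{i\omega}{a}\int_{\partial\Omega}U\overline W\,d\sigma$ with respect to the $H^1$ inner product, one has that $S(\omega)$ is compact (the embedding $H^1(\Omega)\hookrightarrow L^2(\Omega)$ and the trace map $H^1(\Omega)\to L^2(\partial\Omega)$ are compact on the ball) and that $\omega\mapsto S(\omega)$ is an entire, indeed polynomial, operator family.

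Since $\calA$ has compact resolvent, its spectrum consists of isolated eigenvalues of finite algebraic multiplicity with no finite accumulation point, so the set of poles is a discrete closed subset of $\mathbb{C}$. From the elliptic side the same conclusion follows from the analytic Fredholm theorem: $\omega$ is a pole iff $I-S(\omega)$ is not injective, hence (Fredholm alternative) not invertible, while by Lemma~\ref{lem-a-es} the problem is uniquely solvable for $\Im\omega<-C$, so $I-S(\omega_0)$ is invertible at, say, $\omega_0=-2iC$; the analytic Fredholm theorem applied to the entire family $\omega\mapsto I-S(\omega)$ then shows $(I-S(\omega))^{-1}$ is meromorphic on $\mathbb{C}$, so again the poles form a discrete set. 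Either way, intersecting this discrete set with the compact set $\{\,i\beta_j:\ -\beta_j\in[0,C]\,\}$ leaves only finitely many points, which is the assertion. (Combined with Lemma~\ref{lem-underline}, which forces every pole with $\Im\omega\le0$ onto the imaginary axis, this also shows there are only finitely many poles in the whole strip $-C\le\Im\omega\le 0$.)

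The only step that is not purely soft is the compactness claim in the first paragraph — namely $H^2$-regularity up to the boundary for the Robin-type condition together with compactness of the trace operator on the ball — but both are classical for this smooth domain, so I expect no genuine obstacle beyond this bookkeeping. One could also make the argument entirely explicit in the radial reduction: the solution of the associated ODE satisfying the regular condition at the origin is, for each fixed radius, an entire function of $\omega$, so the condition at $r=L/\sqrt2$ reads $F(\omega)=0$ for a non-trivial entire function $F$, whose zeros are isolated and hence finite in number on the compact segment.
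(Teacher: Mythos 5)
Your argument is correct, and it takes a genuinely different route from the paper. The paper proves the lemma by contradiction via sequential compactness: assuming poles $\omega_j$ accumulate at some $\omega_*$ on the imaginary segment, it takes $L^2$-normalized eigenfunctions $U(\omega_j)$, uses the a~priori bounds of Lemma~\ref{lem-underline} (the inequality $\lVert U\lVert_{H^1}+\lVert U\lVert_{L^2(\partial\Omega)}^2\lesssim\lVert U\lVert_{L^2}$ valid for $\beta\in(-C,0)$) to extract a weak-$H^1$/strong-$L^2$ limit which is shown to be a non-trivial eigenfunction at $\omega_*$, and then invokes Kato's perturbation theory to conclude that the resolvent is nevertheless holomorphic on a punctured neighbourhood of $\omega_*$, contradicting accumulation. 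You instead establish outright that the first-order generator $\calA$ has compact resolvent (elliptic $H^2$-regularity: the boundary condition lifts $w_\nu=-a^{-1}w_t|_{\partial\Omega}$ into $H^{1/2}(\partial\Omega)$, whence $\dom(\calA)\hookrightarrow H^2\times H^1$ compactly in $\calH^1$), or equivalently apply the analytic Fredholm theorem to the compact entire family $\omega\mapsto S(\omega)$, anchored by invertibility at some $\omega_0$ with $\Im\omega_0<-C$ coming from Lemma~\ref{lem-a-es}. Either way you obtain that the poles are a discrete subset of $\Compl$ with no finite accumulation point, so finiteness on the compact segment is immediate.

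Your route buys a cleaner and more global statement: discreteness everywhere in $\Compl$, not just on the imaginary segment, and no need for the quantitative a~priori bounds of Lemma~\ref{lem-underline}. It also makes explicit what the paper's final step tacitly relies on — Kato's meromorphy of the resolvent near an isolated eigenvalue is, for a holomorphic compact family, precisely the analytic Fredholm theorem you invoke, so the paper's argument is in effect folding that same soft structure into its last sentence. The paper's approach in exchange re-uses only the elliptic estimates already established and stays closer to the estimates it will need later. The one non-soft ingredient in your version — $H^2$-regularity up to the boundary for the Robin-type condition on the ball, plus compactness of the embedding and trace — is indeed classical for this smooth domain, as you note, so I see no gap. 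Your closing remark about the explicit ODE/entire-function formulation is also sound and is essentially the route the paper takes in Section~\ref{sec-stab-li} for the specific potential $V\equiv1$.
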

In fact, the simplicity of the poles in the lower half plane is a direct consequence of Lemma~\ref{lem-underline}.

Armed with the preceding lemmas and a good choice of $b(t)$, we can holomorphically extend $U$, interpreted as function of  $\omega$, to all of  the lower half plane or even further.
\begin{lemma}\label{lem-holoextension}
 Suppose that there is only a finite number of simple poles\footnote{In particular, we assume there are no higher order poles} under the line $\Im \omega=\beta_0$.
Assume that $H(\omega, x)$ depends holomorphically on $\omega$ in a neighbourhood of the domain $\Im \omega\leq \beta_0$. Then,  for any fixed $b_0(t)$ satisfying Assumption 1, the unique function $U(\omega,x)$ which solves \eqref{ellip-1}--\eqref{ellip-2} for regular points $\omega$  can be  holomorphically extended to all $\omega$  in the domain $\Im \omega\leq \beta_0$, thus also including the poles, namely, $U(\omega, x)$ is holomorphic in this domain and solves Equations  \eqref{ellip-1}--\eqref{ellip-2}.
\end{lemma}
\begin{proof}
Thanks to the choice of $b(t)$, we know that for any regular $\omega$ such that $\Im \omega\leq \beta$ the elliptic equation \eqref{ellip-1}--\eqref{ellip-2} admits a unique solution $U(\omega, x)$. It suffices to show that $U(\omega)$ can be holomorphically extended to any simple pole $\omega_j$.  Since $\omega_j$ is in the spectrum,  we can expand the resolvent   $(\Delta+V+ (\omega_j+ \omega-\omega_j)^2)^{-1}$ with the boundary condition  $i\omega U+ aU_{\nu}= 0$ as 
\begin{equation}
(\omega-\omega_j)^{-1}A+ Q(\omega),  \notag
\end{equation}
where the image of $A$ is of one dimension (the kernel of the operator for $\omega_j$),  and the regular part $Q(\omega)$ is bounded and holomorphic near $\omega_j$.

Now let $U(\omega_j)$ be any one $H^1$-solution of the problem 
\[
\Delta U+ (V+\omega_j^2)U= -H(\omega_j).
\]
According to (iv) of the basic conditions for $b$, we then have 
\[
i\omega_j U(\omega_j)+ aU_{\nu}(\omega_j) = B(\omega_j).
\]
For any $\omega$ close to $\omega_j$, we set the difference 
\begin{equation}
\Delta_{\omega}:= U(\omega)-U(\omega_j),   \notag
\end{equation}
which satisfies
\begin{gather*}
(\Delta + V+\omega^2)\Delta_{\omega}=-(\omega^2-\omega_j^2)U(\omega_j)-H(\omega)+H(\omega_j), \textrm{ in } \Omega,\\
i\omega \Delta_{\omega}+ a(\Delta_{\omega})_{\nu}= i(\omega-\omega_j)U(\omega_j, x)+ B(\omega)-B(\omega_j), \textrm{ on } \partial \Omega.
\end{gather*}
As usual we split $\Delta_{\omega}$ by $V_{\omega}+ W$ with $W$ given by
\begin{equation}
W(\omega, x):= g(x)\big(i(\omega-\omega_j)U(\omega_j, x)|_{\partial \Omega}+ B(\omega)-B(\omega_j)\big),    \notag
\end{equation}
where the smooth function $g(x)$ verifies $g|_{\partial\Omega}(L)=0, g'|_{\partial\Omega}(L)=a^{-1}$.  Thus $V_{\omega}$ satisfies
\begin{align*}
(\Delta + V+\omega^2)V_{\omega}&=-(\omega^2-\omega_j^2)U(\omega_j)-H(\omega)+H(\omega_j)\\
&\;\;\; \;-\big(i(\omega-\omega_j)U(\omega_j, L)+ B(\omega)-B(\omega_j)\big)(\Delta + V+\omega^2)g(x),\\
&= (\omega-\omega_j)R(\omega),\\
&\;\;\;\,i\omega V_{\omega}+ a(V_{\omega})_{\nu}=0,
\end{align*}
where $R(\omega)$ is holomorphic near $\omega_j$. Thanks to the resolvent expression of  $(\Delta+V+ \omega^2)^{-1}$, we know that 
\begin{equation}
V_{\omega}= AR(\omega)+ Q(\omega)(\omega-\omega_j)R(\omega),    \notag
\end{equation}
which is of course holomorphic around $\omega_j$.
\end{proof}

\begin{remark}
The lemmas stated in this section actually hold beyond the radial context. 
\end{remark}
According to Lemma \ref{lem-underline}, we only have information about poles that are below the real line.  However, in the  next section for a specific potential function $V=1$ with the help of  explicit calculations we will be able to extend the function $U(\omega, x)$ significantly above the real line. In particular, the preceding construction can be extended to any strip in the upper half-plane where only finitely many poles exist, under a suitable non-degeneracy condition.

\section{Stabilization around unstable equilibria}\label{sec-stab-li}
This section is devoted to the stabilization of the system around the static solution $u\equiv 1$, more precisely, the stabilization of  its linearized system, 
\begin{gather*}
\Box u-2 u=h, \textrm{ in } \Omega,\\
u_t+ au_{\nu}=b(t), \textrm{ on } \partial\Omega.
\end{gather*}
Let us perform the following simple change of variables 
\begin{equation}
\overline{u}(t, x):= u\left(\frac{t}{\sqrt{2}},\; \frac{x}{\sqrt{2}}\right), \overline{b}(t):= \frac{1}{\sqrt{2}}b\left(\frac{t}{\sqrt{2}}\right), \; \overline{\Omega}:= B_L(0).    \notag
\end{equation}
For ease of notations, if there is no risk of confusion we still denote the new variables by $u$ and $f$, which therefore verify
\begin{gather*}
\Box u- u=h, \textrm{ in } \overline{\Omega},\\
u_t+ au_{\nu}=b(t), \textrm{ on } \partial\overline{\Omega}, \\
u(0, x)=u_0, u_t(0, x)=v_0.
\end{gather*}
For any given $\beta\in\mathbb{R}$, we define and works with the following space for the triple $(h, u_0, v_0)$,
\begin{gather}
\mathcal{B}= \mathcal{B}(\beta):= \{(h, u_0, v_0): h(t, x)\in L^1(0,+\infty; L^2(\overline{\Omega} )), (u_0, v_0)\in \mathcal{H}^1\},    
\end{gather}
with its norm given by 
\begin{gather}
\lVert (h, u_0, v_0)\lVert_{\mathcal{B}}:= \int_0^{+\infty} e^{\beta t}\lVert h(t, \cdot)\lVert_{L^2(\overline{\Omega} )} dt+ \lVert (u_0, v_0)\lVert_{\mathcal{H}^1}.\label{norm-B}
\end{gather}

In the end of this section, we will  achieve the theorem that we present in the following.   Remark that  this result only deals with  the case that  $a\in (0, 1)\setminus \mathcal{A}(L)$ (see \eqref{def:AL} for the precise definition of this set that contains countably many numbers), while when $a$ takes its value from $\mathcal{A}(L)$ the same argument leads to  exponential stabilization with decay rate $\beta_*$, for some number  $\beta_*$ that is sufficiently close to 0 such that there is no pole in the strip $\Im \omega\in [0, \beta_*]$ (as stated in Theorem \ref{thm-stab}).  
\begin{theorem}\label{thm-li-key}
Let $L>0$ such that  $L\neq \tan L$.  Let $a\in (0, 1)\setminus \mathcal{A}(L)$ (see \eqref{def:AL} for a precise definition).   For any $\beta \in (0,  \frac{1}{2L} \log{\frac{1+a}{1-a}})$, there exists a constant $C_{\beta}$ effectively computable such that, for any radial triple $(h, u_0, v_0)\in \mathcal{B}$ we are able to find a smooth real function $b(t)$ compactly supported in  interval $(2, 4)$ satisfying 
\begin{equation}
|b(t)|, |b'(t)|\leq C_{\beta} \lVert (h, u_0, v_0)\lVert_{\mathcal{B}}       \notag
\end{equation}
 such that the solution $u$ verifies
 \begin{align*}
\lVert u(t)\lVert_{\mathcal{H}^1}&\leq C_{\beta}e^{-\beta t}\left(\int_0^{t} \lVert h(t, \cdot)\lVert_{L^2(\overline{\Omega} )} dt+ \lVert (u_0, v_0)\lVert_{\mathcal{H}^1}\right),  \textrm{ for } t\leq 2,\\
\lVert u(t)\lVert_{\mathcal{H}^1}&\leq C_{\beta}e^{-\beta t}\left(\int_0^{+\infty} e^{\beta t}\lVert h(t, \cdot)\lVert_{L^2(\overline{\Omega} )} dt+ \lVert (u_0, v_0)\lVert_{\mathcal{H}^1}\right), \textrm{ for } t> 2.
\end{align*}
Moreover, the real function $b(t)$ is in fact chosen from a fixed finite dimensional space $\mathcal{N}:= \textrm{span } \{b_1(t),..., b_N(t)\}$
\begin{equation}
b(t)= \sum_{k=1}^N l_k b_k(t)     \notag
\end{equation}
with $l_k= l_k(h, u_0, v_0)\in \mathbb{R}$  depending linearly and continuously  on $(h, u_0, v_0)\in \mathcal{B}$:
\begin{equation}
 \sum_{k=1}^N |l_k(h, u_0, v_0)|\leq C_{\beta} \lVert (h, u_0, v_0)\lVert_{\mathcal{B}}.     \notag
\end{equation}
\end{theorem}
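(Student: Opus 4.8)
\textbf{Proof plan for Theorem \ref{thm-li-key}.}

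The plan is to run the holomorphic-extension machinery of Section \ref{sec-lin} for the explicit potential $V=1$ on $\overline\Omega = B_L(0)$ under the radiality assumption, where the resolvent equation \eqref{ellip-1}--\eqref{ellip-2} reduces to a second-order ODE in the radial variable $r$ that can be solved in closed form. First I would substitute $v(r) = r\,U(r)$ so that $\Delta U + (1+\omega^2)U = -H$ becomes $v_{rr} + (1+\omega^2)v = -rH$ on $(0,L)$ with the regularity condition $v(0)=0$ and the Robin condition at $r=L$ coming from \eqref{ellip-2}. The homogeneous solutions are $\sin(\mu r), \cos(\mu r)$ with $\mu = \sqrt{1+\omega^2}$, so $U$ is given by an explicit variation-of-parameters formula, and the poles are exactly the $\omega$ for which the boundary determinant vanishes: writing out $i\omega U + a U_\nu = 0$ at $r=L$ gives a transcendental equation of the form $\tan(\mu L) = \mu L /(1 - i a\omega L \cdot(\text{something}))$ --- more precisely an equation whose solutions, after the normalization, lie on the imaginary axis with imaginary parts governed by $\frac{1}{2L}\log\frac{1+a}{1-a}$. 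The hypothesis $L \neq \tan L$ guarantees $\omega = 0$ is not among them (this is where that condition enters), and Lemmas \ref{lem-underline}--\ref{lem-fin} guarantee there are only finitely many poles in any horizontal strip; the explicit dispersion relation lets me locate all poles with $\Im\omega \le \beta$ for $\beta$ below the stated threshold and verify each such pole is simple.

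Next I would construct the control. Let $\omega_1,\dots,\omega_M$ be the (finitely many) simple poles with $\Im \omega_j \le \beta$; by the pairing $\omega \leftrightarrow -\bar\omega$ noted in item (ii) of Section \ref{subsec-str} they come in conjugate pairs (or lie on the imaginary axis), so the constraints can be met with a \emph{real} $b$. For each such pole, condition (iv) demands that $B(\omega_j) = \int_0^\infty e^{-i\omega_j t} b_0(t)\,dt$ equal the prescribed value of $i\omega_j U + aU_\nu$ computed from the (unique, by Lemma \ref{lem-underline}-type solvability plus the explicit ODE) radial $H^1$-solution of the inhomogeneous equation at $\omega_j$; together with condition (i) --- that $b_0 = \chi' u(t,L)$ on $[0,2]$, which is already fixed by $(u_0,v_0,h)$ --- these are finitely many linear functionals of $b$. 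I would fix a finite family $b_1,\dots,b_N$ of smooth functions compactly supported in $(2,4)$ (so (i), (iii), the support requirement, and compatibility with \eqref{assum-b-02} are automatic) whose Fourier transforms, evaluated at the $\omega_j$ together with the real/imaginary-part bookkeeping, span the required finite-dimensional target; solving the resulting finite linear system expresses the coefficients $l_k$ linearly and continuously in the data $(h,u_0,v_0)$, with the bound $\sum_k |l_k| \lesssim \lVert (h,u_0,v_0)\rVert_{\mathcal B}$ since $B(\omega_j)$ and the boundary trace on $[0,2]$ are controlled by $\lVert (h,u_0,v_0)\rVert_{\mathcal B}$ via Lemma \ref{lem-a-es} and the hidden trace estimate. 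The pointwise bounds $|b(t)|,|b'(t)| \le C_\beta \lVert (h,u_0,v_0)\rVert_{\mathcal B}$ then follow because $b$ lives in a fixed finite-dimensional space of smooth functions.

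With this $b$ in hand, the solution $U(\omega,x)$ of \eqref{ellip-1}--\eqref{ellip-4} extends holomorphically to a neighborhood of $\{\Im\omega \le \beta\}$ by the last Lemma of Section \ref{sec-lin} (applied strip by strip, using that no pole sits on $\Im\omega = \beta$, which is arranged by choosing $\beta$ generically below the threshold --- or by a limiting argument to recover the sharp open interval). I would then invoke the inversion formula \eqref{four-inverse}: shifting the contour from $\Im\omega = -C$ down to $\Im\omega = \beta$ picks up no residues (the extension is holomorphic there), and on the line $\Im\omega = \beta$ I need the quantitative bound $\lVert U(\alpha + i\beta,\cdot)\rVert_{H^1} \lesssim \langle\alpha\rangle^{-1-\epsilon}$ (or at least $L^1_\alpha$-integrability after accounting for the $(i\alpha-\beta)$ factor in $w_t$), which comes from the explicit ODE resolvent estimates of Lemma \ref{lem-underline} extended above the real axis via the closed-form solution, together with the decay of $H(\alpha+i\beta,\cdot)$ and $B(\alpha+i\beta)$ inherited from the time-decay/smoothness of $h_0, b_0$. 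This yields $\lVert e^{\beta t} w(t)\rVert_{H^1} \le C_\beta(\int_0^\infty e^{\beta t}\lVert h\rVert_{L^2} dt + \lVert(u_0,v_0)\rVert_{\mathcal H^1})$, hence the claimed $e^{-\beta t}$ decay for $t > 2$; for $t \le 2$ the bound is the elementary energy estimate of Lemma \ref{lem-a-es} since $b \equiv 0$ there. I expect the \emph{main obstacle} to be the last step: obtaining the contour-integral bound on $\Im\omega = \beta$ uniformly in $\alpha$, i.e. ruling out resonant growth of the explicit resolvent as $|\alpha| \to \infty$ along that line and showing the residue-free shift is legitimate (justifying the formal integration by parts and contour deformation, and handling the borderline integrability coming from the $(i\alpha - \beta)$ weight for the $w_t$-component). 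Keeping track of where exactly the sharp constant $\frac{1}{2L}\log\frac{1+a}{1-a}$ appears --- it is the supremum of $\Im\omega_j$ over all poles, read off from the transcendental dispersion relation --- is the other delicate bookkeeping point.
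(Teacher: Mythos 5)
Your framework is right: radial ODE reduction, explicit Green's function, pole structure from the transcendental boundary relation, and the construction of $b$ as a real solution of a finite moment problem over the finitely many poles below $\Im\omega=\beta$ is exactly the route taken in Section~\ref{subsec-str} and the end of Section~\ref{sec-stab-li}. You also correctly locate the hard step. But the mechanism you propose for that step cannot work, and it is precisely where the paper does something different.

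You want $\lVert U(\alpha+i\beta,\cdot)\rVert_{H^1}\lesssim\langle\alpha\rangle^{-1-\epsilon}$ so that the contour integral in \eqref{four-inverse} converges absolutely. This bound is false. For a wave-type resolvent the generic estimate on a horizontal line is $\lVert R(\omega)\rVert_{L^2\to L^2}\lesssim|\alpha|^{-1}$ and $\lVert R(\omega)\rVert_{L^2\to H^1}\lesssim 1$, with no decay, and for $w_t$ the inversion formula carries the extra factor $(i\alpha-\beta)$ so you would need yet one more power of decay. The source also gives nothing: $h$ is only in a weighted $L^1_tL^2_x$, so $H(\alpha+i\beta,\cdot)$ is merely bounded, not decaying. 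Absolute convergence of $\int e^{i\alpha t}U\,d\alpha$ fails, and the residue-free contour shift cannot be justified by size alone.

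The paper's actual resolution (Section~\ref{subsec-inh}) is structurally different: it never estimates $\lVert U(\alpha+i\beta,\cdot)\rVert_{H^1}$ pointwise in $\alpha$. Instead it (i) reduces by superposition to a single-time-slice source $H(\omega,x)=e^{i\alpha t_0}f(x)$ (Lemma~\ref{lem-key-es}, integrated over $t_0$ in Proposition~\ref{prop-key-es}); (ii) treats the map $f\mapsto\int e^{i\alpha t}\Gamma(r,s;\alpha+i\beta)\,d\alpha\,F(s)\,ds$ as a $t$-dependent kernel operator on $L^2_r$; and (iii) proves uniform-in-$t$ $L^2_r$-boundedness using cancellation, not decay: the explicit $O(1/\alpha)$ expansion of $\eta$ and $\Gamma$, the oscillatory integrals $\int\frac{1}{\alpha^k}e^{i(t\pm r\pm s)\alpha}d\alpha$ of Lemma~\ref{lem-alpha-osc}, and a Hilbert-transform bound (Lemma~\ref{lem-1-r-s}) for the leading singular piece. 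The series $\sum d_0^n e^{-2iLn\alpha}$ for $1/\eta_0$, convergent because $d_0=\frac{1-a}{1+a}e^{2L\beta}<1$, is where the sharp threshold $\beta<\frac{1}{2L}\log\frac{1+a}{1-a}$ actually enters quantitatively; your plan gestures at this but does not tie it to a workable estimate. The boundary-control contribution (Proposition~\ref{prop-key-bd}) also requires a nontrivial device, the ansatz $g(\omega,x)=\phi(x)/(i\omega)$, so that only $b,b'$ (not $b''$) appear; your sketch is silent on this. In short: the architecture of your proposal is sound, but the analytical core you propose for the contour estimate is a route that fails, and you would need to replace it with the oscillatory-kernel/Hilbert-transform argument, which is the real content of Section~\ref{subsec-inh}.
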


As it  corresponds to the case of $V\equiv1$ in the preceding section, we  study  the operator $P(\omega): = P_1(\omega)$ that  we introduced in \eqref{ellip-1}--\eqref{ellip-4}: for any function  $U\in D(P_1(\omega))$,
\begin{gather*}
P_{\omega}U:= (\Delta + 1+ \omega^2) U \; \textrm{ in } \overline\Omega, \\
i\omega U+ a U_{\nu}=0 \; \textrm{ on } \partial \overline\Omega.
\end{gather*}
Since we are working with   radial functions, it is standard to consider the following function on $r\in [0, L]$:
\begin{equation}
 \psi(r):= rU(r).
\end{equation}
Then, the preceding operator on $U$ becomes the following one acting on $\psi$:
\begin{align}
Q(\omega): D(Q(\omega))\subset L^2(0, L)&\rightarrow L^2(0, L), \notag\\
\psi&\mapsto \partial_r^2 \psi + (1+ \omega^2)\psi, \label{op-1} 
\end{align}
with its domain of definition as
\begin{gather}
D(Q(\omega)):= \{\psi\in H^2(0, L): \psi(0)=0, \; (iL\omega-a)\psi(L)+ aL \psi'(L)=0\}. \label{op-2}
\end{gather}\\

We again observe that all solutions of  \eqref{ob--1} have the same boundary value \eqref{ob--2} if $\omega_0$ is a pole: suppose that $\omega_0$ is a pole and that Equation \eqref{ob--1} has two different solutions $U_1$ and $U_2$ having different boundary behaviors, then $U:= U_1- U_2$ is a solution of Equation \eqref{ob--1} with $H=0$. Thus $\psi:= rU$ satisfies 
\begin{gather*}
 \psi_{rr} + (1+ \omega_0^2)\psi=0, \\
\psi(0)=0, \; (iL\omega_0-a)\psi(L)+ aL \psi'(L)\neq 0.
\end{gather*}
Therefore, the unique solution of the second order ODE 
\begin{equation*}
\partial_r^2 \psi + (1+ \omega_0^2)\psi=0, \; \psi(0)=0, \psi_r(0)= 1,
\end{equation*}
satisfies 
\begin{equation*}
(iL\omega_0- a)\psi(L)+ aL \psi'(L)\neq 0.
\end{equation*}
This is in contradiction with the assumption that $\omega_0$ is a pole.

\subsection{Spectral properties}\label{subsec-spectral}
We want to find those non trivial pairs $(\omega, \psi)\in  \mathbb{C}\times D(Q(\omega)$ that satisfy  $Q(\omega)\psi= 0$. Observe that this is related to the following unbounded operator $A$ defined on the Hilbert space $L^2(0, L)$,
\begin{gather*}
A: D(A)\subset L^2(0, L)\rightarrow L^2(0, L), \\
\begin{aligned}
A\begin{pmatrix}\psi_1\\ \psi_2 \end{pmatrix}= \begin{pmatrix} & 1\\ \partial_r^2+ 1& \end{pmatrix}\begin{pmatrix}\psi_1\\ \psi_2 \end{pmatrix},
\end{aligned}
\end{gather*}
with its domain of definition given by 
\begin{equation}
D(A):= \{(\psi_1, \psi_2)^T\in H^2(0, L)\times H^1(0, L): \psi_2(0)=0,\; -a \psi_1(L)+ aL \psi_{1}'(L)= -L \psi_2(L)    \}. \notag
\end{equation}
Note that the spectrum of $A$ coincides the spectrum of the operator $\mathcal{A}$ that is defined in Remark \ref{rmk-spec}.

Since the operator $A$ has real coefficients, its eigenvalues  appear in pairs.  
Suppose that $(i\omega, \psi_1, \psi_2)$ is a pair of eigen modes for the preceding operator, then we easily get from its definition  that
\begin{gather*}
\psi_2= i\omega \psi_1,\; \textrm{ in } \overline\Omega\\
(\partial_r^2+ 1)\psi_1= i\omega \psi_2= -\omega^2 \psi_1, \; \textrm{ in } \overline\Omega\\
\psi_1(0)= \psi_2(0)=0,\\
-a \psi_1(L)+ aL \psi_{1}'(L)= -L \psi_2(L)= -iL\omega \psi_1(L)
\end{gather*} 
Hence, $(\omega, \psi_1)$ is a pair constituting a non trivial solution of \eqref{op-1}--\eqref{op-2}.  It suffices to find all eigenvalues of $A$; in fact, we are more interested in their asymptotic behavior, $i.e.$ when $k$ tends to $\infty$, the value of $i\omega_k$.  

Asymptotically, formally neglecting lower order terms, we reduce to study the simpler operator $\tilde{A}$,
\begin{equation*}
\begin{aligned}
\tilde{A}\begin{pmatrix}\psi_1\\ \psi_2 \end{pmatrix}= \begin{pmatrix} & 1\\ \partial_r^2& \end{pmatrix}\begin{pmatrix}\psi_1\\ \psi_2 \end{pmatrix},
\end{aligned}
\end{equation*}
\begin{equation}
\psi_2(0)=0,\;  a \psi_{1}'(L)= - \psi_2(L),     \notag
\end{equation}
whose spectrum is explicit: 
\begin{equation}
i\tilde \omega_k= \frac{1}{2L} \log \frac{1-a}{1+a}+ i \frac{k \pi}{L},\;  k\in \mathbb{Z},      \notag
\end{equation}
thus
\begin{equation}
\tilde \omega_k= i \frac{1}{2L} \log \frac{1+a}{1-a}+  \frac{k \pi}{L}, \;  k\in \mathbb{Z}.   \notag
\end{equation}
Hence, the following lemma is quite natural. 
In fact, exploiting the radiality assumption, we are able to sharpen Lemma \ref{lem-underline} and Lemma \ref{lem-fin} in the preceding section as follows.
\begin{lemma}\label{lem-eigen}
In the radial case, the operator $P(\omega)$ is  invertible  except on a discrete  set $\{\omega_k\}$, 
\begin{itemize}
\item[(i)]  there exist only finitely many poles $\omega_k$ such that $\textrm{Im}(\omega_k)<0$, and these poles are simple and purely imaginary.  Moreover, if   $\omega_k$ is a pole  then so is $-\bar \omega_k$;
\item[(ii)] there is no pole $\omega_k$ on the  real line if $L\neq \tan L$;
\item[(iii)]  for any $\beta<\frac{1}{2L} \log \frac{1+a}{1-a}$, there are only finitely many poles which are under the line $\Im \omega=\beta$. Indeed, $\beta= \frac{1}{2L} \log \frac{1+a}{1-a}$ is the asymptotic line for poles.
\\
In particular, there exists $\gamma> 0$ such that in the strip $0< \textrm{Im}(\omega)\leq \gamma$ there is no $\omega_k$.
\item[(iv)]  for any $L$ such that $\tan L\neq L$, there exists a set $\mathcal{A}(L)$ containing at most countably many elements such that for any $a\in (0, 1)\setminus \mathcal{A}(L)$ there are only simple poles. 
\end{itemize}
\end{lemma}

\begin{proof}
The first property and (i) follow from Lemma \ref{lem-underline} and Lemma \ref{lem-fin}. We continue with the proof of properties $(ii)$, $(iii)$ and  $(iv)$.

Recall that a complex value $\omega$ being a  pole is equivalent to the existence of a non-trivial solution of the equation 
\begin{gather}
\partial_r^2 \psi + (1+ \omega^2)\psi= 0,  \label{eq:nt1} \\
\psi(0)=0, \; (iL\omega-a)\psi(L)+ aL \psi'(L)=0. \label{eq:nt2}
\end{gather}
From now on we shall denote 
$\langle\omega\rangle^2:= \omega^2+1$, and define $\langle\omega\rangle$ as a square root of $(\omega^2+ 1)$ as follows: if $\omega\notin [-i,i]$, let $\langle\omega\rangle$ be the unique square root of $(\omega^2+ 1)$ closest to $\omega$. Furthermore, for $\omega\in [-i,i]$, given by $\omega = i\sin \rho$, $\rho\in \R$, set $\langle\omega\rangle = |\cos\rho|$. Then $\big|\omega - \langle\omega\rangle\big|\leq 1$. Also, for $\omega\notin [-i, i]$, we have 
\[
\langle-\bar{\omega}\rangle = -\overline{\langle\omega\rangle}.
\]
Now  assume\footnote{It is easily seen that $\omega = \pm i$ are not poles}  $\omega\neq \pm i$, and
 notice that the solutions of
\begin{gather*}
\partial_r^2 \psi + (1+ \omega^2)\psi= 0, \; 
\psi(0)=0
\end{gather*}
are given by $C \psi$ with 
\begin{equation*}
\psi_1(r):= e^{i \langle\omega\rangle r}- e^{-i \langle\omega\rangle r}.
\end{equation*}
Therefore,  \eqref{eq:nt1}--\eqref{eq:nt2} admits a non-trivial solution if and only if 
\begin{equation*}
(iL\omega-a)\psi_1(L)+ aL \psi_1'(L)=0,
\end{equation*}
which is further equivalent to 
\begin{equation}
 (iL \omega-a)  \left(e^{i \langle\omega\rangle L}- e^{-i \langle\omega\rangle L}\right) +  i aL\langle\omega\rangle \left(e^{i \langle\omega\rangle L}+ e^{-i \langle\omega\rangle L}\right)=0. \notag
\end{equation}
This implies that $\omega\in \mathbb{C}\setminus\{i, -i\}$ is a pole \textit{if and only if}
\begin{equation}\label{eq-omega-eig}
e^{2i\langle \omega\rangle L}= \frac{iL\omega-a-iaL\langle \omega\rangle}{iL\omega-a+iaL\langle \omega\rangle},
\end{equation}
or equivalently, 
\begin{equation}\label{eq-omega-eig-2}
(iL\omega- a)\sin (\langle \omega\rangle L)+ aL \langle \omega\rangle \cos (\langle \omega\rangle L)=0.
\end{equation}
Setting $\omega = 0$, we see that the condition $L\neq \tan L$ implies that it is not a pole, as asserted in (ii). 
\\
Next, we prove (iii). 
At first we show that there is a  uniform upper and lower bound for the imaginary part of poles.  
Assume  that $\omega= \alpha+ i\beta$ is a pole such that $|\omega|>N$ for some $N$ large enough to be chosen later on.  Since  $|\omega- \langle \omega\rangle|\leq 1$, we know that on the left-hand side of  Equation \eqref{eq-omega-eig} 
\begin{equation}\label{eq:LHS:1}
e^{-2L(1+ \beta)}\leq  |e^{2i\langle \omega\rangle L}|\leq e^{2L(1- \beta)}.
\end{equation}
Moreover, by the definition of $\langle \omega\rangle$ there is 
\begin{equation*}
\frac{\langle \omega\rangle}{\omega}= 1+ r_{\omega}  \textrm{ with } |r_{\omega}|\leq \frac{1}{N}.
\end{equation*}
Hence the right-hand side of \eqref{eq-omega-eig}  is equivalent to 
\begin{equation}
\frac{iL\omega-a-iaL\langle \omega\rangle}{iL\omega-a+iaL\langle \omega\rangle}= \frac{iL \left(1- a \frac{\langle \omega\rangle}{\omega}\right)- \frac{a}{\omega}}{iL \left(1+ a \frac{\langle \omega\rangle}{\omega}\right)- \frac{a}{\omega}}= \frac{iL \left((1-a)- a r_{\omega}\right)- \frac{a}{\omega}}{iL \left((1+ a)+ a r_{\omega}\right)- \frac{a}{\omega}}.
\end{equation}
Inspired by the preceding formula we shall choose $N:= N_{a, L}$ as
\begin{equation}
N_{a, L}:= \frac{2a(1+L)}{(1-a)L}.
\end{equation}
Such a  choice of $N_{a, L}$ indeed guarantees 
\begin{align*}
|iL \left((1-a)- a r_{\omega}\right)- \frac{a}{\omega}|&\geq L(1- a)- \frac{aL}{N_{a, L}}- \frac{a}{N_{a, L}}\geq \frac{L(1- a)}{2}, \\
|iL \left((1+a)+ a r_{\omega}\right)- \frac{a}{\omega}|&\geq L(1+ a)- \frac{aL}{N_{a, L}}- \frac{a}{N_{a, L}}\geq \frac{L(1+ 3a)}{2}.
\end{align*}
Thus
\begin{equation}
\frac{L(1- a)}{2L(1+ 2a)+ 2a} \leq \left|\frac{iL\omega-a-iaL\langle \omega\rangle}{iL\omega-a+iaL\langle \omega\rangle}\right|\leq \frac{2L(1+ 2a)+ 2a}{L(1+ 3a)},
\end{equation}
which combined with \eqref{eq:LHS:1}, yields an effectively computable constant $C_{a, L}$ such that  
\begin{equation}
|\beta|\leq C_{a, L} \textrm{ for any pole $\omega$ satisfying } |\omega|\geq N_{a, L}.
\end{equation}
Consequently, there exists some constant $C_U>0$ such that 
\begin{equation}
|\beta|\leq C_{U} \textrm{ for any pole $\omega\in \mathbb{C}$. }
\end{equation}
\\

Next  we give an asymptotic expansion for poles such that $\alpha \geq N_{a, L}$. Keep in mind that for any pole $\omega= \alpha+ i\beta$ which satisfies this condition, its imaginary part is smaller than $C_{a, L}$.   By writing $\langle\omega\rangle= \tilde{\alpha}+ i\tilde{\beta}$, one easily gets\footnote{Recall our choice of $\langle\omega\rangle$.}
\begin{gather*}
\tilde{\alpha}= \alpha+ \frac{1}{2\alpha}+ O(\frac{1}{\alpha^2}), \; \tilde{\beta}= \beta+ O(\frac{1}{\alpha^2}),\;
\langle\omega\rangle= \omega+ \frac{1}{2\alpha}+O(\frac{1}{\alpha^2}), \\
e^{i \langle\omega\rangle r}= e^{i \omega r}+ e^{i \omega r}O(\frac{1}{\alpha})= e^{i \omega r}+ O(\frac{1}{\alpha}).
\end{gather*} 
Since $\langle\omega\rangle= \alpha+ \frac{1}{2\alpha}+ i\beta+ O(\frac{1}{\alpha^2})$ we further get
\begin{align}\label{as:es:om}
e^{i\langle\omega\rangle r}= e^{i (\alpha+ \frac{1}{2\alpha}+ i\beta+ O(\frac{1}{\alpha^2}))r}
= e^{i \omega r}+ e^{i \omega r} \frac{i r}{2\alpha}+ \mathbf{r}(r, \alpha)
\end{align}
with 
\begin{equation}
|\mathbf{r}(r, \alpha)|\lesssim \frac{r}{\alpha^2}, \; \forall r\in [0, 2L], \alpha\geq A.      \notag
\end{equation} 

Thanks to \eqref{as:es:om}, the left-hand side of \eqref{eq-omega-eig} is 
\begin{align}
e^{2i\langle\omega\rangle L}= e^{2iL (\alpha+ \frac{1}{2\alpha}+ i\beta)}+ O(\frac{1}{\alpha^2}).
\end{align}
On the other hand, since  $\langle \omega\rangle/ \omega= 1+ O(1/\alpha^2)$,
 the right-hand side verifies 
 \begin{equation}
  \frac{iL\omega-a-iaL\langle \omega\rangle}{iL\omega-a+iaL\langle \omega\rangle}= \frac{iL(1- a)- \frac{a}{\alpha}}{iL(1+ a)- \frac{a}{\alpha}}+ O(\frac{1}{\alpha^2})= \frac{1-a}{1+a}+ O(\frac{1}{\alpha}).
 \end{equation}
  
 By combining the preceding two equations we conclude that 
 \begin{equation}\label{eq:compare}
 e^{2iL (\alpha+ \frac{1}{2\alpha})} e^{-2L\beta}=  \frac{1-a}{1+a}+ O(\frac{1}{\alpha}).
 \end{equation}
 By comparing the absolute values of both sides of the equation we immediately obtain
 \begin{equation}
 \beta=  \frac{1}{2L}\log \frac{1+a}{1-a}+ O(\frac{1}{\alpha}),
\end{equation}  
which implies (iii). 
\\

Finally, we present the proof of Property (iv). 
Notice from  Equation \eqref{eq-omega-eig-2} $\omega$ is a pole if and only if
 \begin{equation}\label{eq:ver-1}
S(a, \omega):= (iL\omega- a)\sin (\langle \omega\rangle L)+ aL \langle \omega\rangle \cos (\langle \omega\rangle L)=0,
\end{equation}
thus it is a double pole if and only if in addition 
the derivative $S_{\omega}(a, \omega)$ at $\omega$ satisfies
\begin{equation}
(i- aL \omega)\sin (\langle \omega\rangle L)+ \frac{iL \omega^2}{\langle \omega\rangle} \cos (\langle \omega\rangle L)=0.
\end{equation}
See also the definition of $c_g$ and $\eta$ by Equation \eqref{def:eta:fun} in the next subsection. 
Therefore, $\omega$ is a double pole if and only if it satisfies both \eqref{eq:ver-1} and 
\begin{equation}\label{eq:ver-2}
L(a^2-1)\omega^3- 2ia \omega^2+ a^2 L\omega- ai=0.
\end{equation}

Take $\omega= iy$ and deduce  that $\omega$ is a solution of \eqref{eq:ver-2} if and only if $y$ is a solution of 
\begin{equation*}
f(y):= (1- a^2)L y^3+ 2a y^2+ a^2 L y- a=0
\end{equation*}
Clearly for any fixed $a\in (0, 1)$ the preceding equation admits three complex valued solutions $y_1(a), y_2(a)$ and $y_3(a)$.  We first notice that $f(0)= -a<0$ and  that $f(1)= L+ a>0$, thus there exists at least one solution in the interval $(0, 1)$. Indeed, there is exactly one solution in this interval: suppose there are at least two solutions, then we deduce from the values of $f(0)$ and $f(1)$ that all the three  solutions should be inside $(0, 1)$, which is incompatible with the fact that 
\begin{equation*}
y_1+ y_2+ y_3= - \frac{2a}{(1-a^2)L}<0.
\end{equation*}
From now on we shall denote such a  unique solution in $(0, 1)$ by $y_1(a)$.   For any $a_0\in (0, 1)$ there exist a closed  ball around $a_0$, $B(a_0)\subset \mathbb{C}$, as well as a simply connected domain around $y_1(a_0)$, say  $D(a_0)\subset \mathbb{C}\backslash\{\pm 1\}$  such that  $y_1: (0, 1)\cap B(a_0)\rightarrow (0, 1)$ can be holomorphically extended to $y_1: B(a_0)\rightarrow D(a_0)$. \\
We claim that the other two solutions $y_2(a)$ and $y_3(a)$ both have negative real part. We know from the preceding equation that $\Re (y_2+ y_3)<0$ and $\Im (y_2+ y_3)=0$. If one of them is real valued, then both are real valued, and in this case  both are negative, since their product is positive. If $y_{2,3}$ are not real valued, then they are complex conjugate numbers having the same negative real part.  

Recall from Lemma \ref{lem-fin} that there are only  simple poles below the real axis; hence we know that $\omega_2:= i y_2(a)$ and $\omega_3:= i y_3(a)$ are not double poles. Consequently, $a\in (0, 1)$ admits double poles if and only if $\omega_1(a):= i y_1(a)$ satisfies Equation \eqref{eq:ver-1}. For any $a_0\in (0, 1)$ we shall define the following function on $B(a_0)$: 
\begin{equation*}
F(a):=  \langle i y_1(a)\rangle^{-1}\cdot S(a, i y_1(a)),
\end{equation*}
which is holomorphic by the choice of the simply connected domain $D(a_0)$. Therefore, there are at most finitely many $a$ inside $B(a_0)$  that admit double poles, and we shall define the set of these values of $a$ as $\mathcal{A}_{a_0}$.   

Finally the set $\mathcal{A}(L)$ is given by 
\begin{equation}\label{def:AL}
\mathcal{A}(L):= \bigcup\limits_{a_0\in (0, 1)} \mathcal{A}_{a_0}\cap (0, 1),
\end{equation}
which only admits countably many elements.  More concretely, some number $a\in (0, 1)$ belongs to $\mathcal{A}(L)$ if and only if it satisfies 
\begin{equation}
(L y_1^2(a)+ a)\sin \left(L \sqrt{1- y_1^2(a)}\right)- aL \sqrt{1- y_1^2(a)} \cos \left(L \sqrt{1- y_1^2(a)}\right),
\end{equation}
where $y_1(a)$ is the unique solution inside $(0, 1)$ of the cubic equation 
\begin{equation}
(1- a^2)L y^3+ 2a y^2+ a^2 L y- a=0.
\end{equation}

\end{proof}

\subsection{Asymptotic resolvent estimate: the inverse $R(\omega)= P(\omega)^{-1}$}\label{subsec-asym}  
In this Section we only work with $\omega\in\mathbb{C}$ such that $\Im \omega\in [-1, C_U+ 1]$ (recall from the preceding section that $C_U$ is the uniform upper bound of the imaginary part of poles). 
For $\omega= \alpha+ i\beta \notin \{\omega_k\}$, the operator $P(\omega)$ is invertible on $L^2_{rad}$.  One of the essential goals here is to characterize the resolvent asymptotically in terms of $\alpha$, which will be used later on in the following sections concerning energy estimates.   For $f(x)\in L^2_{rad}(\Omega)$, thus $rf(r)\in L^2$, and $R(\omega)f= U=r^{-1} \psi$ satisfies,
\begin{gather*}
\partial_r^2 \psi + (1+ \omega^2)\psi= rf(r), \\
\psi(0)=0, \; (iL\omega-a)\psi(L)+ aL \psi'(L)=0,
\end{gather*}
which can be obtained by solving Sturm--Liouville's problem with the help of Green's function:
\begin{equation}
R(\omega)f= r^{-1}\psi= r^{-1}\int_0^L \Gamma(r, s; \omega) sf(s) ds.     \notag
\end{equation}

Let us now concentrate on the  Green's function  which subject to the above ODE is of the form 
\begin{gather}
\Gamma(r, s):=
\begin{cases} \frac{1}{c_g} \phi_1(s) \phi_2(r), \textrm{ for } r\geq s, \\
\frac{1}{c_g} \phi_2(s) \phi_1(r),  \textrm{ for } r< s, 
\end{cases}
\end{gather}
where for $\omega\notin \{\pm i\}$
\begin{equation}\label{def:eta:fun}
\begin{cases}
\phi_1(r):= \frac{1}{2} \left(e^{i \langle\omega\rangle r}- e^{-i \langle\omega\rangle r} \right), \\
\phi_2(r):= \frac{1}{2} \left(e^{i \langle\omega\rangle r}- e^{-i \langle\omega\rangle r}\right) +  \frac{\eta}{2} \left(e^{i \langle\omega\rangle r}+ e^{-i \langle\omega\rangle r}  \right),\\
c_g:=\phi_1(0)\phi_2'(0)-  \phi_1'(0)\phi_2(0)= -i \langle\omega\rangle \eta,
\end{cases}
\end{equation}
with some value $\eta$ to be fixed later on such that $\phi_2(r)$ verifies the right-hand side boundary condition, namely, $(iL\omega-a)\phi_2(L)+ aL \phi_2'(L)=0$.
\begin{remark}
Let us remark here that $R(-\bar{\omega})=\overline{R(\omega)}$.  Indeed, by the definition of $\langle \omega\rangle$, we know that $\langle -\bar{\omega}\rangle=  -\overline{\langle \omega\rangle}$. Then successively, we have $\phi_i(-\bar{\omega}; r, s)= \overline{\phi_i}(\omega; r, s)$, $c_g(-\bar{\omega})= \overline{c_g}(\omega)$, $\Gamma(-\bar{\omega}; r, s)= \overline{\Gamma}(\omega; r, s)$, which implies the required conjugacy property.
\end{remark}
For ease of notations we only consider $\alpha\geq A$ for some $A$ big enough (actually it suffices to take $A=1$). 

\begin{remark}
In the following calculation we only deal with the case that  $\alpha\geq A$, for $\alpha\leq -A$ we also identify $\langle \omega\rangle$ with $\omega+ \frac{1}{2\alpha}+O(\frac{1}{\alpha^2})$ and all the  calculations that follows remain the same.    Moreover, throughout this paper we do not detail the case that $-A\leq \alpha\leq A$, since all the estimates that we will be dealing with are uniformly bounded on this compact interval.
\end{remark}

Let $\beta\in [-1, C_U+1]$. Since $\phi_2$ verifies the right-hand side boundary condition, we have 
\begin{align*}
0=& (iL \omega-a)\left( \left(e^{i \langle\omega\rangle L}- e^{-i \langle\omega\rangle L}\right) +  \eta \left(e^{i \langle\omega\rangle L}+ e^{-i \langle\omega\rangle L}  \right)\right)\\
&+ i aL\langle\omega\rangle\left( \left(e^{i \langle\omega\rangle L}+ e^{-i \langle\omega\rangle L}\right) +  \eta \left(e^{i \langle\omega\rangle L}- e^{-i \langle\omega\rangle L}  \right)\right),
\end{align*}
thus
\begin{equation}
(1- e^{i 2\omega L})- a(1+ e^{i 2\omega L})+ O(\frac{1}{\alpha})=\left((1+ e^{i 2\omega L}) -  a (1- e^{i 2\omega L})+ O(\frac{1}{\alpha})\right) \eta.    \notag
\end{equation}
It is easy to obtain the existence of  $c, C>0$ such  that $\eta\in (c, C)$.
Unfortunately, we are not able to present   $\eta$ by series expansion  with respect to $\frac{1}{\alpha}$, due to its periodicity.  For some further reasons, we need to perform explicit first order expansion of $\alpha$ on $\eta$.  Thanks to  precise calculations, the details of which can be found in Appendix \ref{App-B},    there exist $\pi/L$ periodic functions (with respect to the $\alpha$ variable) $\eta_0(\alpha), \eta_1(\alpha)$ such that 
\begin{align*}
\eta= \eta_0+ \frac{\eta_1}{\alpha}+ O(\frac{1}{\alpha^2})
=  \frac{-1+d_0 e^{-i2L\alpha}}{1+d_0 e^{-i2L\alpha}}+ \frac{d_1 e^{-i2L\alpha}}{(
1+ d_0 e^{-i2L\alpha})^2}\frac{1}{\alpha}+ O(\frac{1}{\alpha^2}),
\end{align*}
where  $d_0$ and $d_1$ are constants depending on the value of $\beta$ with $d_0$ satisfying
\begin{equation}
d_0= \frac{1-a}{1+a} e^{2L\beta}<1.   \notag
\end{equation}

\begin{remark}
The above value of  $d_0$ can be compared with property (iii) of Lemma \ref{lem-eigen}, in fact, from the definition of $d_0$  we easily observe an asymptotic line of poles: $\beta= \frac{1}{2L}\log{\frac{1+a}{1-a}}$, which corresponds to the case that $d_0=1$.  We also emphasize here that the fact that $d_0$ be smaller than 1 is crucial, as this allows us to perform series expansion on $\eta_i$.  Indeed, simple calculation yields
\begin{align*}
-\frac{1}{\eta_0}&= \frac{1+d_0 e^{-i2L\alpha}}{1-d_0 e^{-i2L\alpha}}= 1+2\sum_{n=1}^{\infty} d_0^n e^{-2iL n\alpha},\\
\frac{\eta_1}{\eta_0^2}&= d_1 e^{-i2L\alpha}\frac{1+ d_0e^{-i2L\alpha}}{(1-d_0e^{-i2L\alpha})^2}= d_1\left(1+ \sum_{n=1}^{\infty} (2n+1)d_0^n e^{-2iL n\alpha}\right).
\end{align*}
\end{remark}

Next, we characterize the other terms asymptotically. To simplify the notations we denote
\begin{gather}
e^{\pm}= e^{\pm}(r):= e^{i \omega r}\pm e^{-i\omega r}.     \notag
\end{gather}
Hence
\begin{gather*}
e^{i \langle \omega\rangle r}-e^{-i \langle \omega\rangle r}= e^{-}+ \frac{ir}{2\alpha}e^++ O(\frac{r}{\alpha^2}), \;
e^{i \langle \omega\rangle r}+e^{-i \langle \omega\rangle r}= e^++\frac{ir}{2\alpha}e^-+ O(\frac{r}{\alpha^2}).
\end{gather*}
Then, for $\alpha\in [A, +\infty)$, $s,r\in [-L, L]$, via some direct calculation, successively we are able to obtain 
\begin{align*}
 \eta&=  \eta_0+\frac{\eta_1}{\alpha}+ O(\frac{1}{\alpha^2}),\\
 c_g&=-i\eta \alpha\left(1+\frac{i\beta}{\alpha}+  O(\frac{1}{\alpha^2})\right)=-i\alpha\left(\eta_0+ \frac{i\beta\eta_0+\eta_1}{\alpha}+O(\frac{1}{\alpha^2}) \right),\\
 2\phi_1(r)
 &= e^-+ \frac{ir}{2\alpha}e^+ +O(\frac{r}{\alpha^2}),\\
 2\phi'_{1}(r)& 
 = i \omega \left(e^++\frac{ir}{2\alpha}e^-\right)+O(\frac{1}{\alpha}),\\
 2\phi_2(r)
 &= e^-+ \frac{ir}{2\alpha}e^+ + \eta \left(e^++\frac{ir}{2\alpha}e^-\right)+ O(\frac{r}{\alpha^2}), \\
2\phi'_2(r)&
=  i \omega \left((e^++\frac{ir}{2\alpha}e^-)+ \eta   (e^-+ \frac{ir}{2\alpha}e^+)\right) +O(\frac{1}{\alpha}).
\end{align*}
Thanks to the preceding asymptotic expansion, we can further expand the kernel function $\Gamma$ by
\begin{align}\label{def-Gamma}
\begin{cases}
4 \Gamma(r, s)&= \frac{i}{\alpha \eta}(1-\frac{i \beta}{\alpha}) \phi_1(s)\phi_1(r)+ \frac{i}{\alpha}(1-\frac{i \beta}{\alpha})\phi_1(s)
(e^++\frac{ir}{2\alpha}e^-)(r)+ O(\frac{1}{\alpha^3}),\\
&= \frac{i}{\alpha \eta_0}e^-(s)\left(e^-(r)+\eta_0e^+(r)\right)+O(\frac{1}{\alpha^2}),  \;  \textrm{ for } s\leq r,\\
4 \Gamma(r, s)&= \frac{i}{\alpha \eta}(1-\frac{i \beta}{\alpha}) \phi_1(s)\phi_1(r)+ \frac{i}{\alpha}(1-\frac{i \beta}{\alpha})\phi_1(r)(e^++\frac{is}{2\alpha}e^-)(s)+ O(\frac{1}{\alpha^3}),\\
&= \frac{i}{\alpha \eta_0}e^-(r)\left(e^-(s)+\eta_0e^+(s)\right)+O(\frac{1}{\alpha^2}),  \;   \textrm{ for } r<s,
\end{cases}
\end{align}

\subsection{Instability despite dissipative boundary condition: Theorem \ref{thm-unstab}}
We have seen in Remark \ref{rmk-spec} that the linearized system is stable if and only if there is no pole in the lower half plane.  

Here we are interested in the low frequency case, $i. e.$  the existence of $\omega=-i s$ with $s>0$ such that  equation \eqref{eq-omega-eig} holds, which clearly implies the instability of the nonlinear system.  Note that it is not sufficient to show the existence of  $\omega=-i s, s\geq 0$. For example,  if the unique non-positive  eigenvalue  is given by $s=0$, then this eigen mode  may generate a center manifold  for the nonlinear system, which stabilizes the system. \\

Equation \eqref{eq-omega-eig}, for $\omega=-i s$ with $s>0$ reads 
\begin{equation}\label{eq-arg}
e^{i2L\sqrt{1-s^2}}= \frac{Ls-a-iaL\sqrt{1-s^2}}{Ls-a+iaL\sqrt{1-s^2}}.     
\end{equation}
Observe that in \eqref{eq-arg} both sides have absolute value 1 if $s<1$, hence it suffices to find $s\in (0, 1)$ such that the argument of both complex numbers coincide, which is further equivalent to
\begin{equation}
L\sqrt{1-s^2}= \arg(Ls-a-iaL\sqrt{1-s^2}),    \notag
\end{equation}
or also
\begin{equation}\label{uns-ex-key}
\tan (L\sqrt{1-s^2})= \frac{aL\sqrt{1-s^2}}{a- Ls}.
\end{equation}
Let us define
\begin{equation}
f_1(s):= \tan (L\sqrt{1-s^2}), \; f_2(s):= \frac{aL\sqrt{1-s^2}}{a- Ls}.  \notag 
\end{equation}

Here we have to exclude the case $s = 1$, which corresponds to $\omega = -i$, which is not a pole anyways.
\\

In the following we will prove the existence of unstable eigenmodes,  $\omega=-is$ with $s\in (0, 1)$, by considering different cases.
\begin{itemize}
\item[(1)]
When $L=1$, we look at  equation \eqref{eq-arg}, the goal is to find a solution $s$ that is  between 0 and $a$.  Indeed, when $s$ varies from $0$ to $a$,  the argument of the right hand complex number increases from $\frac{\pi}{2}$ to $\pi$ and the argument of the left decreases from 2 to $2\sqrt{1-a^2}$.  Therefore,  there exists some $s\in (0, a)$ such that these two arguments coincide.  Actually, this strategy easily adapts to the case when $2L\in [\frac{\pi}{2}, \pi]$ mod $2\pi$.  
\item[(2)]
For the general case that  $L>1$, again by  looking at  Equation \eqref{eq-arg} we let $s$ vary from $0$ to $1$.  Simple calculation shows that when $s$ varies from $0$ to $1$,  for \eqref{eq-arg} the argument of the right hand complex number turns from $\frac{\pi}{2}$ for $(s=0)$ to $\pi$ for $ (s=a) $, then to $2\pi$ for $ (s=1) $, and the argument of the left hand side decreases from $2L$ to $0$.  As they are rotating from different directions, thanks to the fact that 
\begin{equation}
2L+ \frac{3\pi}{2}\geq 2\pi,   \notag
\end{equation}
we get  the existence of some  $s\in (0, 1)$ such that Equation \eqref{eq-arg} holds.  
\item[(3)]  When  $0<a<L<1$,  we  turn to the  formula \eqref{uns-ex-key} instead. Since 
\begin{equation}
f_1(0)-f_2(0)= \tan(L)-L>0, \; \lim_{s\rightarrow \frac{a}{L}^-}  f_1(s)- f_2(s)=-\infty,   \notag
\end{equation}
there exists some point $s_0\in (0, \frac{a}{L})$ such that $f_1(s_0)=f_2(s_0)$.
\item[(4)] When  $0<a=L<1$, since
\begin{equation}
f_2(s)= L \sqrt{\frac{1+s}{1-s}}\rightarrow +\infty, \textrm{ when } s\rightarrow 1^-,   \notag
\end{equation}
we also concludes the existence of $s_0$ such that $f_1(s_0)=f_2(s_0)$.
\item[(5)]    When  $0<L< a< 1$, we notice that 
\begin{equation}
f_1(1)-f_2(1)=0.   \notag
\end{equation}
and in fact both $f_1$ and $f_2$ tend to 0 as $s\rightarrow 1$. However, considering the convexity of $f_1-f_2$, we have
\begin{align*}
f_1'(s)-f_2'(s)&= \frac{- \frac{Ls}{\sqrt{1-s^2}}}{\cos^2(L\sqrt{1-s^2})}- aL\left(  -\frac{aLs}{\sqrt{1-s^2} (a-Ls)}+ \frac{L\sqrt{1-s^2}}{(a-Ls)^2}  \right),\\
&= \frac{Ls}{\sqrt{1-s^2}} \left(\frac{a}{a-Ls}-\frac{1}{\cos^2(L\sqrt{1-s^2})}\right)- \frac{aL^2\sqrt{1-s^2}}{(a-Ls)^2}.
\end{align*}
Letting $s$ tend to $1^{-}$ we notice that the preceding expression tends to $+\infty$.  Therefore $(f_1-f_2)'(s)>0$ for $s\in (1-\varepsilon, 1)$, which, combined with the fact that  $(f_1-f_2)(0)>0, (f_1-f_2)(1)=0$, implies the  existence of  some point $s_0\in (0, 1)$ such that $(f_1-f_2)(s_0)=0$.
\end{itemize}
\begin{remark}
This approach actually gives the existence of poles that lie on the interval $i(-1, 0)$, and with the help of more explicit (which is of course much more complicated) study we are even able to completely characterize  the number of poles on this interval.  One still needs to investigate the situation on $i(-C, -1)$ to  obtain the exact number of poles in the lower half plane.
\end{remark}

\subsection{Energy estimates: inhomogeneous problems}\label{subsec-inh}
Both this section and the next are devoted to the energy estimates for the solution $u$ corresponding to the elliptic equation \eqref{ellip-1}--\eqref{ellip-4}.  In this section we are interested in  inhomogeneous problems with null boundary condition (while homogeneous problems with controlled boundary are reserved for the next section),  which  will eventually lead to the decay property that we are seeking. 
\subsubsection{\textbf{The main estimate}}
More precisely,  we want to bound the energy of  $u$  as  follows.
\begin{proposition}\label{prop-key-es}
Let $\beta>0$ such that there is no pole on the line $\Im z= \beta$. If the radial function $U$ verifies
\begin{gather*}
\Delta U + U+ \omega^2 U= \int_0^{+\infty} e^{-it\omega} f(t, x)dt \; \textrm{ in }  \Omega, \\
i\omega U+ a U_{\nu}=0 \; \textrm{ on } \partial \Omega,
\end{gather*}
for $\omega=\alpha+ i\beta$ on the line $\Im z=\beta$,  then the related radial function $w(t, x)= w(t, r)$  given by the inverse Fourier transformation \eqref{four-inverse}  satisfies
\begin{align}
\lVert w(t)\lVert_{\mathcal{H}^1( \Omega)}  \leq C e^{-\beta t} \int_0^{+\infty} e^{t_0 \beta} \lVert (rf)(t_0)\lVert_{L^2_r} dt_0.
\end{align}
\end{proposition}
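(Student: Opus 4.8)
The plan is to realize $u$ via the horizontal Fourier inversion \eqref{four-inverse} and to bound the result by isolating the ``free wave'' part of the resolvent, which we control by a genuine energy estimate, from a fast-decaying remainder, which we control by elementary kernel bounds.

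First, pass to the radial reduction: with $\psi=rU$ the solution of the elliptic problem is $\psi(\omega,r)=\int_0^L\Gamma(r,s;\omega)\hat F(\omega,s)\,ds$, where $\Gamma$ is the Green's function of Section~\ref{subsec-asym} and $\hat F(\omega,s)=\int_0^{+\infty}e^{-it_0\omega}(sf)(t_0,s)\,dt_0$. Plugging this into \eqref{four-inverse}, setting $\omega=\alpha+i\beta$ and exchanging integrations, the function $\Psi(t,r):=\tfrac1{2\pi}\int_{\mathbb R}e^{i\alpha t}\psi(\alpha+i\beta,r)\,d\alpha=re^{\beta t}u(t,r)$ takes the space--time convolution form
\begin{equation*}
\Psi(t,r)=\int_0^L\!\!\int_0^{+\infty}e^{\beta t_0}(sf)(t_0,s)\,K(t-t_0;r,s)\,dt_0\,ds,\qquad K(\tau;r,s)=\frac1{2\pi}\int_{\mathbb R}e^{i\alpha\tau}\Gamma(r,s;\alpha+i\beta)\,d\alpha .
\end{equation*}
Since $\|\hat F(\alpha+i\beta,\cdot)\|_{L^2_s}\le\int_0^{+\infty}e^{\beta t_0}\|(rf)(t_0)\|_{L^2_r}\,dt_0=:M$ uniformly in $\alpha$, and since passing (via the radial change of variables, the identity $\Psi(t,0)=0$, and a Hardy inequality in $r$) from $\|u(t)\|_{\mathcal H^1(\overline\Omega)}$ to $e^{-\beta t}\big(\|\Psi(t)\|_{L^2_r}+\|\partial_r\Psi(t)\|_{L^2_r}+\|\partial_t\Psi(t)\|_{L^2_r}\big)$ is routine, everything reduces to bounding these three $L^2_r$ norms by $CM$.

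Next split $\Gamma=\Gamma_0+\Gamma_1$, where $\Gamma_0$ is the Green's function of $\partial_r^2+\omega^2$ with the simplified conditions $\psi(0)=0$ and $i\omega\psi(L)+a\psi'(L)=0$ --- i.e. the \emph{free dissipative wave} resolvent --- and $\Gamma_1=\Gamma-\Gamma_0$ accounts for the order-zero potential ``$+1$'' inside $\langle\omega\rangle^2$ and for the rank-one boundary correction $\tfrac aL\psi(L)$. The asymptotic analysis of Section~\ref{subsec-asym} then gives $\Gamma_1=\alpha^{-2}g_1(\alpha;r,s)+O(\alpha^{-3})$ for $|\alpha|$ large, with $g_1$ bounded and carrying the same geometric ``reflection'' expansion in $d_0=\frac{1-a}{1+a}e^{2L\beta}$ as $\eta$ does, so that $d_0<1$ (equivalently $\beta<\frac1{2L}\log\frac{1+a}{1-a}$, the operative range) makes that series absolutely summable. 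Consequently the inverse transforms of $\Gamma_1$, of $i\alpha\,\Gamma_1$ and of $\partial_r\Gamma_1$ --- which produce $\Psi_1$, $\partial_t\Psi_1$ and $\partial_r\Psi_1$ --- are each the inverse transform of an $L^1_\alpha$ function plus finitely many ``$\alpha^{-1}\times$ bounded oscillation'' terms, and the latter are bounded uniformly in $(\tau,r,s)$ because $\int_{|\alpha|>A}\alpha^{-1}e^{i\alpha c}\,d\alpha$ is a sine integral bounded uniformly in $c$; the no-poles hypothesis keeps $\Gamma$ (hence $\Gamma_1$) bounded on $|\alpha|\le A$. Convolving these bounded kernels against $e^{\beta t_0}(sf)$ yields $\|\Psi_1(t)\|_{L^2_r}+\|\partial_r\Psi_1(t)\|_{L^2_r}+\|\partial_t\Psi_1(t)\|_{L^2_r}\lesssim\int_0^{+\infty}e^{\beta t_0}\|(rf)(t_0)\|_{L^2_r}\,dt_0=M$.

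For the principal part $\Gamma_0$ this brute force fails, because multiplying its leading $\alpha^{-1}$ behaviour by $i\alpha$ (in $\partial_t\Psi_0$) or by $\partial_r$ gives a non-decaying multiplier, i.e. a kernel carrying Dirac masses; this is the delicate point. The remedy is to recognize $\Psi_0$ as free-wave propagation: $\Gamma_0(r,s;\omega)$ is holomorphic on $\{\Im\omega<\frac1{2L}\log\frac{1+a}{1-a}\}$, which contains $\{0\le\Im\omega\le\beta\}$, and $0$ is not in its spectrum; together with the $|\Re\omega|^{-1}$ decay, this lets us shift the contour in $\Psi_0$ from $\Im\omega=\beta$ down to $\Im\omega=0$, turning $\Psi_0(t,r)$ into the solution $Z(t,r)$ of the $1$D free wave equation $\partial_{tt}Z-\partial_{rr}Z=-(rf)\mathbf 1_{t>0}$ with vanishing data, $Z(t,0)=0$, and $\partial_tZ+a\partial_rZ=0$ at $r=L$. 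For that problem the boundary contribution to the energy identity is $-\tfrac1a|\partial_tZ(t,L)|^2\le0$, so the standard estimate gives $\|\partial_tZ(t)\|_{L^2_r}+\|\partial_rZ(t)\|_{L^2_r}\le\int_0^t\|(rf)(s)\|_{L^2_r}\,ds$ and, by Poincaré, $\|Z(t)\|_{L^2_r}\lesssim\|\partial_rZ(t)\|_{L^2_r}$; since $1\le e^{\beta s}$ on $[0,t]$ the right-hand side is $\le M$. Adding the $\Gamma_0$ and $\Gamma_1$ contributions and reverting to $u$ gives $\|u(t)\|_{\mathcal H^1}\le Ce^{-\beta t}M$, which is the claim. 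I expect the main obstacle to be exactly this $L^1$-in-time mechanism: a direct estimate of the top-order part of the resolvent only yields an $L^2_t$-type dependence on $f$ (and does not even converge for the $\mathcal H^1$-norms), and extracting the stated bound in terms of $\int_0^{+\infty}e^{\beta s}\|(rf)(s)\|_{L^2_r}\,ds$ requires separating out the energy-conserving free wave and exploiting the dissipativity of the boundary condition, plus careful bookkeeping of the reflection series --- whose convergence is precisely the condition $\beta<\frac1{2L}\log\frac{1+a}{1-a}$ of Section~\ref{subsec-asym} rather than merely $\beta>0$.
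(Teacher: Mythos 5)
Your proof is correct in outline but takes a genuinely different route from the paper. The paper proves Proposition~\ref{prop-key-es} as an immediate corollary of Lemma~\ref{lem-key-es}: decompose $U=\int_0^\infty U_{t_0}\,dt_0$ with $U_{t_0}$ solving the elliptic problem with single-time data $e^{-it_0\omega}f(t_0,\cdot)$, apply Lemma~\ref{lem-key-es} to each $U_{t_0}$, and superpose. The technical content is thus pushed entirely into Lemma~\ref{lem-key-es}, which is proved by direct oscillatory-integral bounds on $\Gamma$, its $r$-derivative and its $1/r$-weighted version (estimates \eqref{L2}--\eqref{L2-t}), the delicate step being the Hilbert-transform Lemma~\ref{lem-1-r-s} needed for the leading term of $\Gamma_r$. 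You instead keep the full data together, write $\Psi=re^{\beta t}u$ as a space-time convolution, and split $\Gamma=\Gamma_0+\Gamma_1$, where $\Gamma_0$ is the resolvent of the free dissipative $1$D wave (dropping the $+1$ in $\langle\omega\rangle^2$ and the rank-one $-\tfrac aL\psi(L)$ in the boundary condition). For $\Gamma_1$ you use kernel bounds in the spirit of the paper's Lemma~\ref{lem-alpha-osc}; for $\Gamma_0$ you shift the contour from $\Im\omega=\beta$ down to the real axis (valid because the poles of $\Gamma_0$ all sit exactly on $\Im\omega=\tfrac1{2L}\log\tfrac{1+a}{1-a}>\beta$ and $|\Gamma_0|=O(1/|\alpha|)$ controls the vertical contributions) and observe that $\Psi_0$ is then the retarded solution of the dissipative free wave equation, to which the standard energy identity applies, yielding the $L^1_t\!\to\!L^\infty_t(\mathcal H^1)$ bound with the right dependence since $1\le e^{\beta s}$ for $s\ge 0$. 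What this buys: the part of the kernel that is genuinely non-smoothing, and that forces the paper into the Hilbert transform Lemma~\ref{lem-1-r-s}, is precisely the free-wave propagator, which you instead tame with the physical dissipativity of the boundary --- conceptually cleaner and closer to the PDE. Your Hardy-inequality treatment of the $1/r$ weight (using $\Psi(t,0)=0$) is also a slicker route than the paper's explicit estimate \eqref{H1-1} (the paper's Lemma~\ref{lem-1r-key}). What the paper's route buys: the single-$t_0$ reduction (Lemma~\ref{lem-key-es}) cleanly isolates a fixed-frequency problem, and the asymptotic expansion of $\Gamma$ itself (rather than $\Gamma-\Gamma_0$) is already carried out in Section~\ref{subsec-asym} and Appendix~\ref{App-B}, so no new asymptotics of a second Green's function are needed.

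Two points in your write-up are glossed over and would need to be carried out. First, the claim that $\Gamma_1=\alpha^{-2}g_1+O(\alpha^{-3})$ with $g_1$ bounded and with the geometric reflection structure is asserted but not derived --- this is a genuine computation, parallel to Appendix~\ref{App-B} but for $\Gamma-\Gamma_0$ rather than $\Gamma-\tilde\Gamma$, and one must check that after multiplying by $\alpha$ or taking $\partial_r$ every term is of the form $f(r,s)d_0^n\alpha^{-1}e^{i(\pm r\pm s-2nL)\alpha}$ plus an absolutely integrable remainder, exactly so that the paper's $h$- and $k$-type bounded-oscillation argument applies and the series in $n$ converges via $d_0<1$. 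Second, the operative constraint $\beta<\tfrac1{2L}\log\tfrac{1+a}{1-a}$, which you correctly identify as what makes $d_0<1$ and places the poles of $\Gamma_0$ strictly above the contour, is not stated in Proposition~\ref{prop-key-es} as written (only $\beta>0$ and no poles on the line); it is made explicit in the paper's Proposition~\ref{prop-key-es}'s supporting statement a few lines later and is needed by either proof, so your flagging of it is apt.
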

The  following lemma is essential to the proof of the preceding result.
\begin{lemma}\label{lem-key-es}
Let $\beta>0$ such that there is no pole on the line $\Im z= \beta$. If the radial function $U$ verifies
\begin{gather*}
\Delta U + U+ \omega^2 U= e^{i\alpha t_0}f(x) \; \textrm{ in } \Omega, \\
i\omega U+ a U_{\nu}=0  \; \textrm{ on } \partial \Omega,
\end{gather*}
for $\omega=\alpha+ i\beta$ on the line $\Im z=\beta$, then the radial function  $w(t, x)= w(t, r)$ given by the inverse Fourier transformation \eqref{four-inverse}  satisfies
\begin{equation}
\int_0^L \Big((rw)^2+ w^2+  (rw)_t^2+ (rw)^2_r\Big)(t, r) dr
\leq C e^{-2\beta t}\int_0^L (rf)^2 dr, \notag
\end{equation}
where the left hand side is equivalent to the $\mathcal{H}^1(\Omega)$ norm, and where the constant $C$ is independent on the choice of $f$ and $t_0$.
Moreover, if in the elliptic equation we replace  the function $e^{i\alpha t_0}$ by $\frac{e^{i\alpha t_0}}{\omega}$, then the same estimate still holds.
\end{lemma}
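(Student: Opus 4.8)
The plan is to read the exponential decay directly off the inverse Fourier integral along the line $\Im\omega=\beta$, using the explicit Green's function of Section~\ref{subsec-asym} together with its reflection expansion. Writing $g:=rf\in L^2(0,L)$, so $\|g\|_{L^2}=\|rf\|_{L^2}$, linearity gives $\psi:=rU=e^{i\alpha t_0}\int_0^L\Gamma(r,s;\omega)\,g(s)\,ds$, and by \eqref{four-inverse}
\[
(ru)(t,r)=e^{-\beta t}\frac1{2\pi}\int_{\mathbb R}e^{i\alpha t}\psi\,d\alpha,\qquad (ru)_t(t,r)=e^{-\beta t}\frac1{2\pi}\int_{\mathbb R}(i\alpha-\beta)e^{i\alpha t}\psi\,d\alpha,
\]
with $(ru)_r$ obtained by differentiating $\psi$ in $r$. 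Since $\phi_1(0)=0$ forces $\psi(0)=0$, Hardy's inequality gives $\int_0^L u^2\,dr=\int_0^L(\psi/r)^2\,dr\lesssim\int_0^L(ru)_r^2\,dr$, so it is enough to bound the three quantities built from $(ru)$, $(ru)_t$, $(ru)_r$ by $Ce^{-2\beta t}\|g\|_{L^2}^2$ uniformly in $t$ and $t_0$.

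The main step is the expansion of $\Gamma$. Feeding the asymptotics of Section~\ref{subsec-asym} into the Green's function and using the geometric series
\[
\frac1{\eta_0}=-\Bigl(1+2\sum_{n\ge1}d_0^{\,n}e^{-2iLn\alpha}\Bigr),\qquad d_0=\frac{1-a}{1+a}\,e^{2L\beta}<1 ,
\]
which converges \emph{precisely} because $\beta<\frac1{2L}\log\frac{1+a}{1-a}$, and with $\eta$ bounded away from $0$ on the whole line $\Im\omega=\beta$ because that line carries no pole, one gets, for $|\alpha|\ge A$,
\[
\Gamma(r,s;\omega)=\sum_{n\ge0}\frac{d_0^{\,n}}{\alpha}\,e^{i\alpha\rho_{n,j}(r,s)}\,A_{n,j}(\beta;r,s)+O(|\alpha|^{-2}),
\]
where for each $n$ the $\rho_{n,j}$ range over the finitely many affine functions $\pm r\pm s-2Ln$, the amplitudes $A_{n,j}$ are bounded on $[0,L]^2$, and a similar expansion (with leading order $|\alpha|^{0}$) holds for $\Gamma_r$. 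This is d'Alembert's formula in disguise: the $n$-th block is the wave after $n$ boundary round trips, each reflection off $r=L$ damped by $\frac{1-a}{1+a}$ and each round trip of length $2L$ carrying the weight $e^{2L\beta}$ coming from the $e^{\beta t}$ conjugation, whence the effective reflection coefficient $d_0$. The low-frequency range $|\alpha|\le A$ is harmless since $\Gamma$ and $\Gamma_r$ are bounded there (no pole on the line) and $|e^{i\alpha t_0}|=1$.

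Now apply $\frac1{2\pi}\int e^{i\alpha t}(\cdot)\,d\alpha$ (resp.\ with the factor $i\alpha-\beta$, resp.\ after $\partial_r$), interchanging it with the $s$-integral and the $n$-sum; this is legitimate by the geometric decay $d_0^{\,n}$ and the elementary identities $\frac1{2\pi}\,\mathrm{p.v.}\!\int\frac{e^{i\alpha\tau}}{\alpha}\,d\alpha=\frac i2\,\mathrm{sgn}\,\tau$ and $\frac1{2\pi}\int e^{i\alpha\tau}\,d\alpha=\delta(\tau)$, the $O(|\alpha|^{-2})$ remainders producing absolutely convergent, uniformly bounded integrals. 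For $(ru)$ one obtains $(ru)(t,r)=e^{-\beta t}\int_0^L K(t,t_0;r,s)\,g(s)\,ds$ with $K$ a geometrically convergent sum of $\mathrm{sgn}$-kernels, hence bounded on $[0,L]^2$; Cauchy--Schwarz and Fubini then give $\int_0^L(ru)^2\,dr\le e^{-2\beta t}\|K\|_{L^2([0,L]^2)}^2\|g\|_{L^2}^2\lesssim e^{-2\beta t}\|rf\|_{L^2}^2$. For $(ru)_t$ and $(ru)_r$ the factor $i\alpha$ (resp.\ the $\partial_r$) kills one power of $\alpha$, so the leading contribution samples $g$ along the characteristics $s=s_{n,j}^{\ast}(r)$ (affine, slope $\pm1$); each such sampling is a bounded operator on $L^2(0,L)$ of norm $O(1)$, so the $d_0^{\,n}$-weighted sum is bounded, whence $\int_0^L(ru)_t^2\,dr,\ \int_0^L(ru)_r^2\,dr\lesssim e^{-2\beta t}\|rf\|_{L^2}^2$. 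Together with the Hardy bound for $\int_0^L u^2\,dr$, this proves the stated inequality. The variant with $e^{i\alpha t_0}/\omega$ in place of $e^{i\alpha t_0}$ only gains a further power of $\alpha$ everywhere (since $1/\omega=1/\alpha+O(|\alpha|^{-2})$), turning the $\delta$/characteristic terms into $\mathrm{sgn}$-terms, so the same bounds hold \emph{a fortiori}.

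The hard part is that the estimate sits exactly at the borderline of convergence: $\Gamma\sim|\alpha|^{-1}$ and $\Gamma_r\sim1$, so the inverse Fourier integrals are merely conditionally convergent and crude absolute-value bounds are useless; the argument works only because the precise traveling-wave/reflection structure lets one identify the resummed operators as $L^2$-bounded convolutions and characteristic shifts. The delicate points are the bookkeeping of the factors $e^{\pm i\omega r\pm i\omega s-2iLn\alpha}$ and of their $\beta$-dependent amplitudes (all of which must remain bounded on $[0,L]$), the justification of term-by-term Fourier inversion at the $1/\alpha$ threshold, and the convergence of the reflection series in operator norm --- which is exactly where the hypothesis $\beta<\frac1{2L}\log\frac{1+a}{1-a}$, i.e.\ $d_0<1$, is indispensable.
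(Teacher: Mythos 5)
Your overall architecture matches the paper's: pass to $\psi=rU$, use the explicit Green's function, expand $1/\eta_0$ as a geometric series in $d_0<1$ to read off the reflection structure, split low frequency $|\alpha|\le A$ (bounded, since the line $\Im\omega=\beta$ carries no pole) from high frequency $|\alpha|\ge A$ where the asymptotics apply, and then bound the resummed kernels in $L^2$. Your use of Hardy's inequality (via $\psi(0)=0$) to control $\int_0^L u^2\,dr$ from $\int_0^L(ru)_r^2\,dr$ is in fact a cleaner route than the paper's, which instead proves a separate lemma (Lemma~\ref{lem-1r-key}) to bound the $1/r$-weighted kernel directly.

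However, there is a genuine gap in your treatment of the highest-order terms $(ru)_r$ and $(ru)_t$. You identify the leading contribution, after the $\partial_r$ or $i\alpha$ factor cancels the $1/\alpha$, with a delta-function along characteristics, invoking the full-line identity $\tfrac1{2\pi}\int_{\mathbb R}e^{i\alpha\tau}d\alpha=\delta(\tau)$. But the asymptotic form of $\Gamma$ is only valid on $|\alpha|\ge A$, and over the truncated range one has instead
\begin{equation*}
\int_A^{\infty}e^{i\alpha\tau}\,d\alpha = \pi\delta(\tau) + i\,\mathrm{p.v.}\,\frac{e^{iA\tau}}{\tau}\quad\text{(up to bounded terms)},
\end{equation*}
so a singular Hilbert-transform kernel in the variable $\tau=t\pm r\pm s-2Ln$ is generated in addition to the shift. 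This is precisely where the paper's effort is concentrated: it computes $h'(p)=-\tfrac1p e^{ipA}$ for $h(p)=\int_A^\infty\alpha^{-1}e^{ip\alpha}\,d\alpha$ and invokes the $L^2$-boundedness of the Hilbert transform (Lemma~\ref{lem-1-r-s}) to control the resulting kernel $\tfrac1{\tilde t\pm r\pm s}e^{(\pm r\pm s)z}$. Your argument neither identifies this singular piece nor justifies its $L^2$-boundedness; the ``characteristic shift'' description only accounts for the delta piece. The final conclusion is still correct, since the Hilbert transform is indeed $L^2$-bounded, but as written your proof does not establish it --- you would need to add the Hilbert-transform step (or an equivalent Calder\'on--Zygmund argument) to close the estimate for $(ru)_r$ and $(ru)_t$.

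A secondary, smaller imprecision: for $(ru)$ itself, the truncated kernel $\int_A^\infty\alpha^{-1}e^{i\tau\alpha}d\alpha$ is not a $\mathrm{sgn}$-function; it has a logarithmic singularity at $\tau=0$. This is still integrable and the $L^2$ bound goes through (as the paper shows in Lemma~\ref{lem-alpha-osc} via Minkowski's integral inequality), but the bound is not the trivial pointwise one your description suggests.
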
 
\begin{proof}[Proof of Proposition \ref{prop-key-es}]
Suppose that $U$ is the unique  solution of the equation (recall that  $\omega= \alpha+ i\beta$ is not a pole, thus the solution is unique). We decompose $U$ by
\begin{equation}
U:= \int_0^{+\infty} U_{t_0} dt_0    \notag
\end{equation}
with $U_{t_0}$ the solution of 
\begin{gather*}
\Delta U_{t_0} + U_{t_0}+ \omega^2 U_{t_0}= e^{-i t_0 \omega}f(t_0, x)= e^{\beta t_0}  e^{-i t_0 \alpha}f(t_0, x), \; \textrm{ in } \overline\Omega, \\
i\omega U_{t_0}+ a (U_{t_0})_{\nu}=0, \; \textrm{ on } \partial \overline\Omega.
\end{gather*}
Thanks to Lemma \ref{lem-key-es} we know that 
\begin{equation}
\lVert w_{t_0}\lVert_{\mathcal{H}^1}
\leq C e^{-\beta t}  e^{t_0 \beta} \lVert (rf)(t_0)\lVert_{L^2_r} ,\;\forall t, t_0\in \mathbb{R}^+,   \notag
\end{equation}
thus
\begin{equation}
\lVert w \lVert_{\mathcal{H}^1}
\leq C e^{-\beta t}  \int_0^{+\infty}e^{t_0 \beta} \lVert (rf)(t_0)\lVert_{L^2_r} dt_0,\;\forall t \in \mathbb{R}^+.   \notag
\end{equation}
\end{proof}
In the rest of this section we focus on the proof of Lemma \ref{lem-key-es}.  For  ease of notations, we only prove the case with $t_0=0$, while the other cases can be adapted by  
 the same proof.  
From now on, we denote $rf(r)$ by $F(r)$.  By assuming $f(x)\in L^2_{rad}$ we have $F(r)\in L^2_r$:
\begin{gather*}
\int_{B_L} f^2(x) dx= C\int_0^L r^2 f^2(r) dr= C \int_0^L F^2(r) dr, \\
\int_{B_L} |\nabla f(x)|^2 dx= C\int_0^L r^2 f_r^2 dr.
\end{gather*}
From the previous section, we know that 
\begin{gather}
R(\omega)f=  r^{-1}\int_0^L \Gamma(r, s; \omega) F(s) ds=: r^{-1}R_0(r; \omega).
\end{gather}
Therefore,  thanks to  the radiality of  the function, it only remains to prove the following proposition to conclude Lemma \ref{lem-key-es}.
\begin{proposition}
Let $a\in(0, 1)$. Let $\beta< \frac{1}{2L} \log{\frac{1+a}{1-a}}$ such that there is no pole on the line $\Im \omega=\beta$. There exists $C>0$ such that, for any $t\in\mathbb{R}^+$,
\begin{gather}
\int_0^L \left|\int_{-\infty}^{+\infty} e^{it \alpha} R_0(r; \omega) d\alpha \right|^2 dr \leq C\|F\|_{L^2}^2, \label{L2} \\
\int_0^L \left|\left(\int_{-\infty}^{+\infty} e^{it \alpha} R_0(r; \omega) d\alpha\right)_r\right|^2 dr \leq  C\|F\|_{L^2}^2, \label{H2-2}  \\
\int_0^L \left|\int_{-\infty}^{+\infty} \frac{1}{r} e^{it \alpha} R_0(r; \omega) d\alpha\right|^2 dr \leq C\|F\|_{L^2}^2, \label{H1-1} \\
\int_0^L \left|\int_{-\infty}^{+\infty} \alpha e^{it \alpha} R_0(r; \omega) d\alpha\right|^2 dr \leq C\|F\|_{L^2}^2. \label{L2-t}
\end{gather}
\end{proposition}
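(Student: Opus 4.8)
The plan is to insert the asymptotic expansion of the kernel $\Gamma(r,s;\omega)$ derived in Section \ref{subsec-asym} into each of the four integrals, and then reduce every estimate to a Plancherel/Parseval argument in the variable $\alpha$ combined with the summable geometric series coming from $d_0<1$. Writing $\omega=\alpha+i\beta$ and $e^{\pm}(r)=e^{i\omega r}\pm e^{-i\omega r}$, we have from the excerpt that for $s\le r$
\[
\Gamma(r,s;\omega)=\frac{i}{4\alpha\eta_0}e^-(s)\bigl(e^-(r)+\eta_0 e^+(r)\bigr)+O(\alpha^{-2}),
\]
and symmetrically for $r<s$, with $-1/\eta_0=1+2\sum_{n\ge1}d_0^n e^{-2iLn\alpha}$ and $d_0=\frac{1-a}{1+a}e^{2L\beta}<1$. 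First I would substitute this into $R_0(r;\omega)=\int_0^L\Gamma(r,s;\omega)F(s)\,ds$, splitting the $s$-integral at $s=r$, and collect the resulting terms into: a \emph{leading} family of the shape $\frac{1}{\alpha}\,(\text{bounded }\pi/L\text{-periodic factor in }\alpha)\times(\text{exponential in }\alpha\text{ times }F)$, and an \emph{error} family which is $O(\alpha^{-2})$ and hence trivially integrable against any $F\in L^2$ after Cauchy--Schwarz in $\alpha$.

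The main work is the leading family. Each leading term, after expanding $e^{\pm}(r)=e^{i\alpha r}e^{-\beta r}\pm e^{-i\alpha r}e^{\beta r}$ and likewise in $s$, and after expanding $1/\eta_0$ and $\eta_1/\eta_0^2$ into their geometric series in $e^{-2iL\alpha}$, becomes a (doubly) infinite sum of pieces of the form
\[
c_n\,e^{-\beta(\pm r\pm s)}\,d_0^{\,n}\int_0^{(L\text{ or }r\text{ or }s)}\frac{1}{\alpha}\,e^{i\alpha(t\pm r\pm s - 2Ln)}F(s)\,ds ,
\]
integrated in $\alpha$ over $\R$. For fixed $n$ and fixed choice of signs, $\int_{-\infty}^{\infty}\frac{1}{\alpha}e^{i\alpha\theta}\,d\alpha$ is (a constant times) $\operatorname{sign}\theta$, so interchanging the $\alpha$- and $s$-integrals turns the inner object into a \emph{truncated integral of $F$}, i.e. an operator of the form $F\mapsto \int_0^{L} K_n(r,s)F(s)\,ds$ whose kernel $K_n$ is bounded by $C d_0^{n}$ uniformly in $r,s\in[0,L]$ and $t\in\R^+$ (the $e^{-\beta(\cdots)}$ and $e^{t_0\beta}$-type factors are bounded on the compact interval, and the exponential decay in $t$ is extracted from the overall $e^{-\beta t}$ that was already factored out in Proposition \ref{prop-key-es}; here we only need boundedness in $t$). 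Summing the geometric series $\sum_n d_0^n<\infty$ produces a single bounded integral kernel on $L^2(0,L)$, and Schur's test (or just Cauchy--Schwarz, since $[0,L]$ has finite measure) gives \eqref{L2}. The $\frac1r$-weighted estimate \eqref{H1-1} is handled the same way after noting that near $r=0$ one has $e^-(r)=O(r)$ so $\Gamma(r,s;\omega)/r$ stays bounded, killing the apparent singularity; the $\alpha$-weighted estimate \eqref{L2-t} uses one fewer power of $\alpha$, so the leading term is now $O(\alpha^0)$ — here I would integrate by parts once in $s$ (using $F\in $ the relevant space, or rather using the structure $\partial_\alpha$ acting on $e^{i\alpha(\cdots)}$) to recover an extra $1/\alpha$, alternatively absorb the $\alpha$ into $\partial_r$ via $\partial_r e^{i\alpha r}=i\alpha e^{i\alpha r}$ and reduce \eqref{L2-t} to \eqref{H2-2}. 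For \eqref{H2-2} one differentiates $R_0$ in $r$; this again multiplies $e^{i\alpha r}$ by $i\alpha$, cancelling the $1/\alpha$ in the leading term, so \eqref{H2-2} is structurally the same as \eqref{L2-t} and is treated identically.

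The step I expect to be the main obstacle is making the interchange of integrals and the resummation \emph{uniform in $t\ge 0$} while simultaneously controlling the non-absolutely-convergent $\alpha$-integral $\int \frac{1}{\alpha}e^{i\alpha\theta}d\alpha$ at $\alpha=0$ and at $\alpha=\pm\infty$. Two points need care: (i) the expansions of $\eta_0$, $\eta_1$ and of $e^{i\langle\omega\rangle r}$ were only justified for $|\alpha|\ge A$, so I would split $\R=\{|\alpha|\le A\}\cup\{|\alpha|>A\}$ and dispatch the compact part by the crude bound that on $\{|\alpha|\le A\}$ (away from the finitely many poles, none of which sit on $\Im\omega=\beta$) the resolvent is uniformly bounded, giving an $L^2\to H^1$ bound with an $L^1_\alpha$ kernel; (ii) on $\{|\alpha|>A\}$ the principal-value issue at $\alpha=0$ does not arise, and the decay is $O(\alpha^{-1})$ for the leading term which, crucially, is \emph{not} in $L^1(d\alpha)$ — this is exactly why one cannot estimate naively and must instead exploit the oscillation, i.e. recognize $\int\frac1\alpha e^{i\alpha\theta}d\alpha$ as the Fourier transform of a bounded function and pass to the $s$-integral representation before estimating. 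Once that representation is in hand, everything is a fixed bounded integral operator on $L^2(0,L)$ and the four bounds follow. I would relegate the bookkeeping of signs and the explicit constants $c_n,C$ to the calculation; the conceptual content is entirely (a) the kernel expansion from Section \ref{subsec-asym}, (b) $d_0<1$ for the geometric resummation, and (c) Plancherel turning the $\alpha$-integral into a truncated $s$-integral.
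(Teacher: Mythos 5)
Your reductions for \eqref{L2} and \eqref{H1-1} are broadly in line with the paper's strategy (split $|\alpha|\lessgtr A$, expand $1/\eta_0$ into a geometric series in $d_0 e^{-2iL\alpha}$ with $d_0<1$, dispatch the $O(\alpha^{-2})$ tail crudely), but there are two real problems. First, your claim that after interchanging $\alpha$- and $s$-integrals the resulting kernel $K_n(r,s)$ is ``bounded by $Cd_0^n$ uniformly in $r,s\in[0,L]$ and $t$'' is not correct for the leading $O(1/\alpha)$ terms: the one-sided oscillatory integral $h(p)=\int_A^{+\infty}\alpha^{-1}e^{ip\alpha}\,d\alpha$ has a logarithmic singularity at $p=0$, and only the symmetrized combination $k(p)=\int_A^{+\infty}\alpha^{-1}\sin(p\alpha)\,d\alpha$ is bounded. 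Because the coefficients multiplying $e^{\pm i\omega r}e^{\pm i\omega s}$ in $\Gamma$ are complex-valued and $\alpha$-dependent (through $\eta_0$, $\eta_1$), the two half-lines do not pair up into pure $k$-type integrals; one genuinely lands on $h$-type kernels with a $\log|t\pm r\pm s|$ singularity, and the right statement is a bound from $L^1(0,L)$ to $L^2(0,L)$ (this is what the paper's Lemma~\ref{lem-alpha-osc} actually proves), not a uniform pointwise bound on the kernel.

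Second, and more seriously, your treatment of \eqref{H2-2} and \eqref{L2-t} is circular: you reduce \eqref{L2-t} to \eqref{H2-2} by absorbing $\alpha$ into $\partial_r$, and \eqref{H2-2} to \eqref{L2-t} by the reverse observation, so neither is ever actually proved. The alternative you float---integrating by parts in $s$---fails because $F$ is only assumed in $L^2$. The missing ingredient is precisely what happens when the $O(\alpha^{-1})$ leading term is hit by $\partial_r$ (or by $\alpha$): then the $\alpha$-integral becomes the derivative of $h(\tilde t\pm r\pm s)$, which produces the genuinely singular kernel $\dfrac{1}{\tilde t\pm r\pm s}$. This is a Hilbert transform, and boundedness on $L^2(0,L)$ requires a dedicated argument (the paper's Lemma~\ref{lem-1-r-s}, extending $f$ by zero and invoking the $L^2$-boundedness of the Hilbert transform on $\mathbb{R}$). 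Without recognizing this singular-integral structure, your Plancherel/$\operatorname{sign}\theta$ heuristic does not close the estimate for \eqref{H2-2} or \eqref{L2-t}. The remaining components of $\partial_r\tilde K$ (the pieces coming from differentiating the $e^{-(\pm r\pm s)\beta}$ weights, and the $\tilde\Gamma_2$ piece depending only on $r-s$) do stay bounded as you expect, but the $(r+s)$-type terms genuinely require the Hilbert-transform step.
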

\noindent Observe that $(R_0)_r$ gives an extra $\alpha$ for high frequency, thus in some sense \eqref{H2-2} also implies \eqref{L2} and \eqref{L2-t}.  In the following parts of this section, we will prove those estimates one by one, with a main focus on the proof of \eqref{H2-2}, as this almost describes all technical difficulties. 

Moreover, because those kernels are  uniformly bounded on the domain $\alpha\in[-A, A], r,s\in[0, L]$, $ e.g.$  $\Gamma_r(r, s;\omega), \Gamma_r(r, s;\omega)/r$, we only concentrate on the proof of the related inequalities with integral domain $(-\infty, -A)\cup (A, +\infty)$.

\subsubsection{\textbf{A technical lemma}}
Look at    the expansion of $\Gamma$ by \eqref{def-Gamma}  and  the definition of the kernels that are given in  \eqref{L2}--\eqref{L2-t},  basically we are dealing with
\begin{equation}
\int_{-\infty}^{+\infty}  \frac{1}{\alpha^k}e^{i(t\pm r\pm s)\alpha}d\alpha, \textrm{ with } k= 0, 1, 2, 3...  \notag
\end{equation}
Thus it is natural to present the following lemma concerning the estimates on such a kernel.
\begin{lemma}\label{lem-alpha-osc}
Let $A\geq 1$.  There exists $C>0$ such that for any function $S(r, s)$ satisfying $|S|\leq 1$ for any $r, s\in [0, L]$, the following estimates hold,
\begin{align*}
\lVert \int_0^r S(r, s)\int_A^{+\infty} \frac{1}{\alpha^2}e^{i(t\pm r\pm s)\alpha}d\alpha F(s) ds\lVert_{L^2_r}&\leq C \lVert F\lVert_{L^2},\\
\lVert \int_0^r S(r, s)\int_A^{+\infty} \frac{1}{\alpha}e^{i(t\pm r\pm s)\alpha}d\alpha F(s)ds\lVert_{L^2_r}&\leq C \lVert F\lVert_{L^2},\\
\lVert \int_r^L S(r, s)\int_A^{+\infty} \frac{1}{\alpha}e^{i(t\pm r\pm s)\alpha}d\alpha F(s)ds\lVert_{L^2_r}&\leq C \lVert F\lVert_{L^2}.
\end{align*}

\end{lemma}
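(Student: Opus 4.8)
The plan is to reduce the three stated estimates to standard one-dimensional harmonic-analysis facts about the Fourier transform of $\alpha^{-k}\chi_{[A,\infty)}$, and then to handle the remaining convolution-type operators by Schur's test or the Hardy inequality. Concretely, for fixed signs write $\tau := t\pm r\pm s$ and set
\begin{equation}
K_k(\tau) := \int_A^{+\infty}\frac{1}{\alpha^k}e^{i\tau\alpha}\,d\alpha, \qquad k=1,2. \notag
\end{equation}
First I would record the pointwise bounds on these kernels. For $k=2$ the integral converges absolutely, so $|K_2(\tau)|\le \int_A^\infty\alpha^{-2}\,d\alpha = A^{-1}$ uniformly in $\tau$; moreover integration by parts gives $|K_2(\tau)|\lesssim \min(1,|\tau|^{-1})$, hence $K_2\in L^1(\mathbb R)\cap L^\infty(\mathbb R)$ with norms depending only on $A$. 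For $k=1$ the kernel is only conditionally convergent, but $K_1(\tau)=\int_A^\infty \alpha^{-1}e^{i\tau\alpha}d\alpha$ is, up to a smooth bounded correction, a constant multiple of the exponential integral / the function $-\log|\tau|$ near $\tau=0$ plus an $O(|\tau|^{-1})$ tail; the key structural fact is that $K_1$ is bounded outside any neighbourhood of $0$ and has only a logarithmic singularity at $\tau=0$, so $K_1\in L^p_{\mathrm{loc}}$ for every $p<\infty$ and $K_1\in L^\infty(|\tau|\ge\delta)$.

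Second, I would rewrite each of the three operators as an integral operator in $(r,s)\in[0,L]^2$ with kernel
\begin{equation}
T_k F(r) = \int_{J(r)} f(r,s)\,K_k\big(t\pm r\pm s\big)\,F(s)\,ds, \notag
\end{equation}
where $J(r)=[0,r]$ in the first two lines and $J(r)=[r,L]$ in the third, and $|f(r,s)|\le 1$. For the $k=2$ cases this is immediate: since $|K_2(t\pm r\pm s)|\le A^{-1}$ and the $s$-domain has length $\le L$, Schur's test (or simply Cauchy--Schwarz in $s$ followed by Fubini in $r$) gives $\|T_2F\|_{L^2_r}\le C(A,L)\|F\|_{L^2}$, uniformly in $t$. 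Note the uniformity in $t$ is automatic because $K_2$ is bounded independently of its argument. This disposes of the first displayed inequality in Lemma~\ref{lem-alpha-osc}.

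The third paragraph is the heart of the matter: the two $k=1$ estimates, where $K_1$ has the logarithmic singularity along the diagonal-type set $t\pm r\pm s=0$. Here I would fix $t$ and observe that for each pair of signs the map $s\mapsto t\pm r\pm s$ is an isometry, so the operator $T_1$ is (a restriction of) convolution against $K_1$ composed with reflections; since $K_1$ restricted to the compact window of possible arguments $\tau\in[t-2L,\,t+2L]$ lies in $L^1$ with norm bounded uniformly in $t$ (the only singularity, at $\tau=0$, is logarithmic hence integrable, and away from it $K_1$ is bounded), Young's convolution inequality $\|K_1 * G\|_{L^2}\le \|K_1\|_{L^1(\text{window})}\|G\|_{L^2}$ yields the bound $\|T_1F\|_{L^2_r}\le C(A,L)\|F\|_{L^2}$ with a constant independent of $t$. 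The restriction of the $s$-integral to $[0,r]$ or $[r,L]$ only improves matters, since it amounts to composing with a projection. The main obstacle, and the step requiring genuine care, is precisely the justification that $K_1$ is a legitimate $L^1_{\mathrm{loc}}$ function with the claimed logarithmic singularity and $O(|\tau|^{-1})$ decay --- i.e. making the conditionally convergent oscillatory integral rigorous. I would do this by writing $K_1(\tau)=\int_A^{N}\alpha^{-1}e^{i\tau\alpha}d\alpha + \text{(tail)}$, integrating the tail by parts once to extract the $O((N|\tau|)^{-1})$ bound, letting $N\to\infty$, and comparing the principal part with the cosine/sine integral functions $\mathrm{Ci},\mathrm{Si}$ whose behaviour near $0$ and $\infty$ is classical; alternatively one can cite that $\widehat{\alpha^{-1}\chi_{[A,\infty)}}$ is a BMO-type function but the elementary route suffices on the compact $\tau$-window we need. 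Once these kernel estimates are in hand, the $L^2_r$ bounds follow mechanically from Young/Schur, and uniformity in $t$ is visibly preserved throughout.
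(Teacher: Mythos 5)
Your proposal is correct and follows essentially the same route as the paper: both hinge on identifying that $K_1(\tau)=\int_A^\infty\alpha^{-1}e^{i\tau\alpha}\,d\alpha$ has a logarithmic (hence integrable, indeed square-integrable) singularity at $\tau=0$ and is uniformly bounded away from it, which is exactly the paper's bound $|h(p)|\le\max\{C_0,|\log|p||\}$. The only difference is in the final step: you invoke Young's convolution inequality (after taking absolute values to absorb $f(r,s)$ and restricting $K_1$ to the compact window $\tau\in[t-2L,t+2L]$), giving a bound in terms of $\|F\|_{L^2}$ directly, whereas the paper applies Minkowski's integral inequality to get $\|g\|_{L^2_r}\lesssim\|F\|_{L^1}$ and then uses $L^2([0,L])\hookrightarrow L^1([0,L])$; these are interchangeable on the bounded domain, and both exploit the same kernel estimate.
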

\begin{proof}
The first inequality is trivial, it suffices to consider the rest. 
 Let us  consider the following two special integrals $h(p)$ and $k(p)$ given by
\begin{gather}
h(p):= \int_A^{+\infty}\frac{1}{\alpha} e^{ip \alpha} d\alpha,  \label{ref-h} \\
k(p):= -\frac{i}{2} (\int_{-\infty}^{-A}+\int_A^{+\infty})\frac{1}{\alpha} e^{ip \alpha} d\alpha=
 \int_A^{+\infty}\frac{1}{\alpha} \sin(p\alpha) d\alpha.  \label{ref-k}
\end{gather} 
The function $k(p)$ will turn out to be useful later on. 
 Via a simple change of variable,  they satisfy
 \begin{gather*}
 |h(p)|\leq \max\{C_0, |\log|p||\}, \;h'(p)= -\frac{1}{p} e^{ipA},\\
 |k(p)|, \;|k'(p)|\leq C_0.
 \end{gather*}
Indeed, for $h(p)$ with $p>0$, we know from a simple change of variables that
\begin{align*}
\int_A^{+\infty} \frac{1}{\alpha} e^{i p \alpha}d\alpha= \int_{Ap}^{+\infty} \frac{1}{\alpha} e^{i\alpha} d\alpha
=\int_{pA}^1  \frac{1}{\alpha}e^{i\alpha}d\alpha+\int_1^{+\infty} \frac{1}{\alpha} e^{i \alpha}d\alpha,
\end{align*}
which concludes the properties of $h(p)$.  The same idea also holds for $k(p)$, while thanks to the fact that $\sin(p\alpha)/\alpha$ is continuous at $\alpha=0$, both $k(p)$ and $k'(p)$ are uniformly bounded.

Therefore, for example, by taking $t+r+s$ we have
\begin{equation}
|G_0(r, s)|:=| \int_A^{+\infty} \frac{1}{\alpha} e^{i (t+ r+ s) \alpha}d\alpha|\leq C_0+ |\log |t+r+s||.     \notag
\end{equation}
We extend $G_0(r, s)$ from $s\leq r$ to $0\leq s, r\leq L$, then we have  
\begin{equation}
g(r):=\int_0^r G_0(r, s) f(s) ds,\; \forall r\in [0, L]     \notag
\end{equation}
satisfies
\begin{align*}
\lVert g(r)\lVert_{L^2_r}
\leq\left(\int_0^L |\int_0^L G_0(r, s) f(s) ds|^2 dr\right)^{\frac{1}{2}}
\leq \int_0^L \left(\int_0^L G_0^2(r, s) dr\right)^{\frac{1}{2}} |f(s)|ds,  
\end{align*}
thus
\begin{align*}
\lVert g(r)\lVert_{L^2_r}\leq C  \lVert f\lVert_{L^1}, \textrm{ for } t\in [-4L, 4L].
\end{align*}
On the other hand, if $t\notin [-4L, 4L]$ then $|G_0|$ is uniformly bounded, hence
\begin{equation}
\lVert g(r)\lVert_{L^2_r}  \leq C \lVert f\lVert_{L^1},   \notag
\end{equation}
which combined with the preceding case yields
\begin{equation}
\lVert g(r)\lVert_{L^2_r}  \leq C \lVert f\lVert_{L^1}, \forall t\in \mathbb{R}.   \notag
\end{equation}
\end{proof}

\subsubsection{\textbf{On the estimate of \eqref{L2}.}}
This part  is devoted to  the proof of the $L^2$ estimate \eqref{L2}: 
\begin{equation}
\int_{-\infty}^{+\infty} e^{it \alpha} R_0(r; \omega) d\alpha\in L^2_r.   \notag
\end{equation}
Thanks to Lemma  \ref{lem-alpha-osc} and the expansion of $\Gamma$ given by Equation \eqref{def-Gamma}, we can remove the $O(\frac{1}{\alpha^2})$ terms  by regarding  $\Gamma(r, s)$  as $\mathbf{\tilde{\Gamma}(r, s)}+ O(\frac{1}{\alpha^2})$, where the function $\tilde{\Gamma}$ is defined by
\begin{gather*}
4\tilde \Gamma(r, s)
:= 
\begin{cases}
\frac{i}{\alpha \eta_0}e^-(s)e^-(r)+ \frac{i}{\alpha}e^-(s)e^+(r), \; \textrm{ if } s\leq r, \\
 \frac{i}{\alpha \eta_0}e^-(s)e^-(r)+ \frac{i}{\alpha}e^-(r)e^+(s), \; \textrm{ if } s> r.
 \end{cases}
\end{gather*}
Furthermore, thanks to Lemma  \ref{lem-alpha-osc} again, the kernel 
\begin{equation}
 \int_A^{+\infty}e^{it\alpha} \tilde \Gamma(r, s; \omega)d\alpha   \notag
\end{equation}
defines a bounded operator on $L^2$.   The other parts can be estimated in the same way.

\subsubsection{\textbf{On the estimate of \eqref{H2-2}.}}
 We study, in this part,  the derivative of the  kernel $K(r, s)$ given by
\begin{equation}
K(r, s):= \left(\int_{-\infty}^{-A}+\int_A^{+\infty}\right)e^{it\alpha} \Gamma(r, s; \omega)d\alpha.   \notag
\end{equation}
\noindent\textbf{Reduce  $K(r, s)$  as $\tilde{K}(r, s)$.}

Notice that $\Gamma$ contains several non-explicit parts, which prevent us from  using oscillate integration directly.  However,   observe that those terms are asymptotically small,  therefore, we select and only work with the main part of $K$: 
\begin{equation}
\tilde{K}(r, s):= \left(\int_{-\infty}^{-A}+ \int_A^{+\infty}\right)  e^{it\alpha} \tilde \Gamma(r, s; \omega)d\alpha.     \notag
\end{equation}

After some careful calculation, the details of which can be found in Appendix \ref{App-B}, we have  the following expansion of  $4\Gamma_r(r, s)-4\tilde{\Gamma}_r(r, s)$, {\small
\begin{align*}
-\frac{i}{2\eta \alpha}\left(s e^+(r)\left(e^+(s)+\eta  e^-(s)\right) +  r e^-(r)(e^-(s)+\eta e^+(s))    \right)+ \frac{1}{\alpha}\frac{\eta_1}{\eta_0^2}e^+(r)e^-(s)  +O(\frac{1}{\alpha^2}), 
\end{align*} }
 for the region  $r< s$, and {\small
\begin{align*}
-\frac{i}{2\alpha\eta}\Big(se^+(s)(e^+(r)+\eta e^-(r))+re^-(s)(e^-(r)+\eta e^+(r)) \Big)+ \frac{1}{\alpha}\frac{\eta_1}{\eta_0^2}e^+(r)e^-(s)+ O(\frac{1}{\alpha^2}),
\end{align*} }
for the region  $s\leq r$.

Now we prove that the kernel $\tilde \Gamma_r- \Gamma_r$
generates a bounded operator on $L^2$.  Indeed $\tilde \Gamma_r- \Gamma_r$ is composed of two parts:  we denote by $R_1$ for the  $O(\frac{1}{\alpha^2})$ part, and by $R_2$ for the rest part.  Suppose that $F(s)$ is a $L^2$ function, then consider
\begin{equation}
\int_0^L F(s)\int_A^{+\infty} e^{it\alpha} (R_1+R_2) d\alpha ds.     \notag
\end{equation}
At first, for $R_1$ we know that 
\begin{equation}
 \int_0^L F(s)\int_A^{+\infty} e^{it\alpha} O(\frac{1}{\alpha^2})d\alpha ds     \notag
\end{equation}
is uniformly bounded, which  implies the required   $L^2_r$ estimate.

Next, for $G_2$ we notice that all the terms are of the form 
\begin{equation}
S(r, s)d_0^n \int_A^{+\infty} \frac{1}{\alpha} e^{i (t- 2nL\pm r\pm s) \alpha}d\alpha,     \notag
\end{equation}
where $|S(r, s)|$ is uniformly bounded, and where the index $-i2Ln\alpha$ on the exponential term comes from the series expansion of $\frac{1}{\eta_0}$. 
Therefore, $G_2$ generates a bounded operator from $L^1$ to  $L^2$ with its norm given by 
\begin{equation}
4|S(r, s)|_{L^{\infty}} C (2+ 2C\sum_n d_0^n+ C \sum_n(2n+1)d_0^n)<+\infty.     \notag
\end{equation}

As a consequence,   in the following,  it suffices  to work with the reduced kernel 
\begin{equation}
\tilde K_r(r, s):= \left(\Big(\int_{-\infty}^{-A}+\int_A^{+\infty}\Big)e^{it\alpha} \tilde \Gamma(r, s; \omega)d\alpha\right)_r.      \notag
\end{equation}
Thanks to the explicit expression of  $\tilde{\Gamma}(r, s)$,  we directly calculate $\tilde{K}$ and $\tilde{K}_r$ by  decomposing $\tilde{\Gamma}$ into two parts $\tilde{\Gamma}_1$ and $\tilde{\Gamma}_2$:
\begin{align*}
\tilde{\Gamma}_1&:= \frac{i}{\alpha \eta_0} e^-(s)e^-(r)+\frac{i}{\alpha}\left(e^{i(\alpha+i\beta)(s+r)}-e^{-i(\alpha+i\beta)(s+r)}\right), \\
\tilde{\Gamma}_2&:=\frac{i}{\alpha}\left(-e^{i(\alpha+i\beta)(r-s)}+ e^{i(\alpha+i\beta)(s-r)}\right), \textrm{ if }s\leq r, \\
\tilde{\Gamma}_2&:= \frac{i}{\alpha}\left(e^{i(\alpha+i\beta)(r-s)}- e^{i(\alpha+i\beta)(s-r)}\right), \textrm{ if }s> r.
\end{align*}
In the following we shall  treat  $\tilde{\Gamma}_1$ and $\tilde{\Gamma}_2$ one by one.
\\

\noindent \textbf{On the estimate concerning $\tilde{\Gamma}_1(r, s)$.}\\
Concerning $\tilde{\Gamma}_1$, thanks to the series expansion of $\frac{1}{\eta_0}$, each of the composing  terms  has the  form
\begin{equation}
\frac{i}{\alpha}d_0^n e^{-i 2nL\alpha} e^{i(\pm r\pm s)(\alpha+i\beta)}.   \notag
\end{equation}
As the sum of $d_0^{n}$ converges,  thanks to an argument similar to the one used for $G_2$,  it suffices  to obtain a uniform bound on 
\begin{equation}
\frac{1}{\alpha} e^{i t_0\alpha} e^{i(\pm r\pm s)(\alpha+i\beta)}, \; \forall t_0\in \mathbb{R}.   \notag
\end{equation}
Let 
\begin{equation*}
H_1(r, s):= \int_A^{+\infty}e^{i (t+t_0)\alpha}\frac{1}{\alpha}  e^{i(\pm r\pm s)(\alpha+i\beta)} d\alpha=  \int_A^{+\infty} \frac{1}{\alpha} e^{i \tilde{t}\alpha} e^{i(\pm r\pm s)(\alpha+i\beta)} d\alpha.
\end{equation*}
Then, via  direct calculation, we obtain
\begin{equation}
H_1(r, s)= e^{-(\pm r\pm s)\beta} \int_A^{+\infty} \frac{1}{\alpha} e^{i(\tilde{t}\pm r\pm s)\alpha} d\alpha= e^{-(\pm r\pm s)\beta}\left(C_0+ h(\tilde t\pm r\pm s) \right). \notag
\end{equation}
In order to understand the derivative in the distributional sense, we write 
\begin{align*}
H_1(r, s)= \lim_{\epsilon\rightarrow 0+}e^{-(\pm r\pm s)\beta} \int_A^{+\infty} \frac{1}{\alpha} e^{i(\tilde{t}\pm r\pm s)\alpha - \epsilon\alpha} d\alpha,
\end{align*}
and so 
\begin{align*}
(H_1)_r(r, s)&= \mp \beta e^{-(\pm r\pm s)\beta}\left(C_0+ h(\tilde t\pm r\pm s) \right) \pm  \lim_{\epsilon\rightarrow 0+}e^{-(\pm r\pm s)\beta}\frac{1}{\tilde t\pm r\pm s + i\epsilon} e^{i(\tilde t\pm r\pm s)A}, \\
&= \mp \beta e^{-(\pm r\pm s)\beta}\left(C_0+ h(\tilde t\pm r\pm s) \right) \pm \lim_{\epsilon\rightarrow 0+}e^{i\tilde{t}A}  \frac{1}{\tilde t\pm r\pm s + i\epsilon} e^{i(\pm r\pm s)(A+i\beta)}, \\
&=\mp \beta e^{-(\pm r\pm s)\beta}\left(C_0+ h(\tilde t\pm r\pm s) \right)  \pm \lim_{\epsilon\rightarrow 0+}e^{i\tilde{t}A}  \frac{1}{\tilde t\pm r\pm s + i\epsilon} e^{(\pm r\pm s)z},
\end{align*}
where $A, \beta$, and $z=-\beta+iA$ are fixed numbers.  Thanks to Lemma \ref{lem-alpha-osc}, the first part of the preceding formula is a kernel that generates a bounded operator on $L^2$ with a uniform norm. It only remains to consider the second part, more precisely, the distribution valued kernel 
\begin{equation}\begin{split}
\lim_{\epsilon\rightarrow 0+}\frac{1}{\tilde t\pm r\pm s + i\epsilon} e^{(\pm r\pm s)z} &= P.V.\big(\frac{1}{\tilde t\pm r\pm s} e^{(\pm r\pm s)z}\big)\\
& - i\pi\cdot \delta_0(\tilde t\pm r\pm s),  r, s\in [0, L].  \notag
\end{split}\end{equation}
The second term on the right clearly leads to an operator which is $L^2$-bounded. 
As for the first term on the right, thanks to a  symmetric change of variable, it suffices to consider 
\begin{equation}
\frac{1}{t+ r\pm s} e^{(r\pm s)z},  r, s\in [0, L].   \notag
\end{equation}
For $F(s)\in L^2$, we define 
\begin{equation}
G(r)= \int_0^L\frac{1}{t+r\pm s} e^{(r\pm s)z}F(s)ds,   \notag
\end{equation}
thus
\begin{equation}
e^{-rz} G(r)= \int_0^L \frac{1}{t+r\pm s} \left(e^{-sz}F(s)\right) ds,   \notag
\end{equation}
which  can  be treated by means of the following lemma.
\begin{lemma}\label{lem-1-r-s}
There exists $C>0$ such that for any $t\in \mathbb{R}$, we have 
\begin{equation}
\left\lVert \int_0^L \frac{1}{t+r\pm s} f(s) ds\right\lVert_{L^2(0, L)}\leq C\lVert f(s)\lVert_{L^2(0, L)}.   \notag
\end{equation}
\end{lemma}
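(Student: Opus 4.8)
The plan is to recognize the operator $T_t f(r) := \int_0^L \frac{1}{t+r-s} f(s)\,ds$ as a (truncated, shifted) Hilbert-transform-type operator and exploit the uniform $L^2$-boundedness of the Hilbert transform on the line. First I would extend $f$ by zero outside $[0,L]$, so that $T_t f(r) = \int_{\mathbb R} \frac{1}{(r+t) - s} f(s)\,ds$, and substitute $\sigma = s$, $\rho = r+t$ to write $T_t f(r) = (Hf)(r+t)$ up to the constant $\pi$ (or $i\pi$, depending on the normalization of the principal value), where $H$ is the Hilbert transform. Since translation is an $L^2$-isometry and $\|H\|_{L^2(\mathbb R)\to L^2(\mathbb R)}$ is a fixed finite constant independent of $t$, and since $f$ is supported on an interval of length $L$, we conclude $\|T_t f\|_{L^2(0,L)} \le \|T_t f\|_{L^2(\mathbb R)} = \|H f\|_{L^2(\mathbb R)} \lesssim \|f\|_{L^2(0,L)}$, which is exactly the claim (the implicit constant absorbing the Hilbert transform norm; if one insists on constant $1$ one may instead observe that the claimed constant is not literally $1$ but that only the boundedness is used downstream, or invoke the sharp Pichorides constant).

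The one point requiring a little care is the singularity at $s = r+t$: this lies inside the integration region precisely when $t \in [-L, L]$, and there the integral must be understood in the principal-value sense, which is exactly how the kernel $\frac{1}{t+r-s}$ arose in the preceding computation (as $h'(p) = -\frac1p e^{ipA}$ plus bounded terms, or as the symmetrized integral $k(p)$). For $|t| > L$ the kernel $\frac{1}{t+r-s}$ is bounded on $[0,L]\times[0,L]$ by $\frac{1}{|t|-L}$, so $T_t$ is trivially bounded there — in fact with small norm for large $|t|$ — and no principal value is needed; this disposes of the tail and leaves only the compact range $|t|\le L$, on which the Hilbert transform argument above applies uniformly.

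The main (and only) obstacle is purely bookkeeping: making sure that the principal-value interpretation is consistent with the way the kernel $\frac{1}{\tilde t \pm r \pm s}e^{(\pm r\pm s)z}$ was produced from $(H_1)_r$, i.e. that the ``$\frac{1}{\tilde t \pm r \pm s}$'' really does carry the Cauchy principal value coming from $h'(p)=-\frac1p e^{ipA}$ when $p=0$ is attained. Once that is granted, no genuine estimate is needed beyond the classical $L^2$-boundedness of the Hilbert transform, so I would simply cite that (e.g. via the Fourier multiplier $-i\,\mathrm{sign}(\xi)$) and conclude. An alternative, entirely elementary route avoiding any reference: split $\int_0^L = \int_{|r-s|<\delta} + \int_{|r-s|\ge\delta}$, handle the far part by Schur's test (the kernel is $\lesssim 1/\delta$ there but supported in a strip, giving an $O(\log)$ bound), and handle the near part again by the Hilbert transform — but the clean statement is just that $T_t$ is a translate of a Hilbert transform, so I would present it that way.
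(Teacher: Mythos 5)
Your proposal is correct and follows essentially the same route as the paper: extend $f$ by zero, recognize the operator as a shifted Hilbert transform, and invoke its $L^2$-boundedness together with the translation invariance of the Lebesgue norm. The paper's proof is a terser version of exactly this argument (it even records the sharp constant $\pi$), and your remarks on the principal-value convention and the trivial large-$|t|$ case are sound but not needed beyond what the paper already does.
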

\begin{proof}
Observe that the preceding formula is of Hilbert transform type. Extend $f(s)$ trivially to the whole line and by abuse of notation denote the resulting function by $f(s)$.  Then  for $r\in [0, L]$ we have
\begin{equation}
g(r)= \int_0^L \frac{1}{t+r\pm s} f(s) ds= \int_{-\infty}^{+\infty} \frac{1}{t+r\pm s} f(s) ds= \int_{-\infty}^{+\infty} \frac{1}{s}f(t+r\mp s) ds.   \notag
\end{equation}
We extend $g(r)$ by the same formula to the whole line and still denote it by $g(r)$.  As this is exactly a Hilbert transform, we get 
\begin{equation}
\lVert g(r)\lVert_{L^2}= \pi \lVert f(s)\lVert_{L^2}.   \notag
\end{equation}
\end{proof}

\noindent\textbf{On the estimate concerning $\tilde{\Gamma}_2(r, s)$.}\\
Concerning $\tilde{\Gamma}_2(r, s)$, we know that (recall the proof of Lemma~\ref{lem-alpha-osc} for the definition of the function $k$)
\begin{align*}
\tilde{K}_2(r, s)&=\left(\int_{-\infty}^{-A}+\int_A^{+\infty}\right) e^{it\alpha} \tilde{\Gamma}_2(r, s) d\alpha, \\
(\textrm{for } s\leq r) &= \left(\int_{-\infty}^{-A}+\int_A^{+\infty}\right)e^{it\alpha} \frac{i}{\alpha} \left(-e^{i(\alpha+i\beta)(r-s)}+e^{-i(\alpha+i\beta)(r-s)} \right), \\
&= 2e^{-(r-s)\beta} k(t+r-s)-   2e^{(r-s)\beta}k(t-r+s),\\
(\textrm{for } s> r) &= \left(\int_{-\infty}^{-A}+\int_A^{+\infty}\right)e^{it\alpha}\frac{i}{\alpha}  \left(e^{i(\alpha+i\beta)(r-s)}-e^{-i(\alpha+i\beta)(r-s)} \right),\\
&= -2e^{-(r-s)\beta} k(t+r-s)+   2e^{(r-s)\beta}k(t-r+s).
\end{align*}
Simple calculation shows that  $|(\tilde{K}_2)_r(r, s)|$ is uniformly bounded in the domain $r,s\in [0, L]$, which of course generates a bounded operator on $L^2$.  This simple symmetric structure also holds for $\tilde{\Gamma}_1$.

\subsubsection{\textbf{On the estimate of \eqref{H1-1}.}}
To be compared with Equation \eqref{L2}, the estimate \eqref{H1-1} admits an extra $1/r$, however,  as we will demonstrate later on,  which does not produce any singularity at $r=0$.  Similar to the previous sections, we  reduce the kernel $\Gamma$ by $\tilde{\tilde{\Gamma}}$ that is given by
\begin{gather*}
\tilde{\tilde{\Gamma}}:= 
\begin{cases}
 \frac{1}{\alpha \eta_0}e^-(s) \left(e^-(r)+\eta_0 e^+(r)\right)= \tilde \Gamma, \; \textrm{ for } s\leq r,\\
 \frac{1}{\alpha \eta_0}e^-(r) \left(e^-(s)+\eta_0 e^+(s)+ \frac{is}{2\alpha}e^+(s)+ \eta_0 \frac{is}{2\alpha}e^-(s)\right), \; \textrm{ for } s> r.
\end{cases}
\end{gather*}
In fact, it is allowed to consider $\tilde{\tilde{\Gamma}}$ instead of $\Gamma$, thanks to those techniques that we performed in previous sections as well as the following facts,
\begin{itemize}
\item[(1)] in $\Gamma-\tilde{\tilde{\Gamma}}$, those terms of type $\frac{r}{\alpha^2}$ can be easily bounded;
\item[(2)] for $|\alpha| \geq A$, we have $|e^-(r)|\leq C |\alpha r|$;
\item[(3)] inequality \eqref{H1-es-3}, which will be proved later on.
\end{itemize}
Therefore, it suffices to consider the kernel
\begin{equation}
 L(r, s):= \frac{1}{r}\Big(\int_{-\infty}^{-A}+\int_A^{+\infty}\Big)e^{it\alpha} \tilde{\tilde{\Gamma}}(r, s; \omega)d\alpha.      \notag
\end{equation}
It remains to show that 
\begin{lemma}\label{lem-1r-key}
The following integrations 
\begin{align}
\frac{1}{r} \Big(\int_{-\infty}^{-A}+\int_A^{+\infty}\Big)e^{it\alpha} \frac{1}{\alpha} e^{-}(s) e^-(r)d\alpha, & \textrm{ for } s\leq r, \label{H1-es-0}\\
\frac{1}{r} \Big(\int_{-\infty}^{-A}+\int_A^{+\infty}\Big)e^{it\alpha} \frac{1}{\alpha} e^{-}(s) e^+(r)d\alpha, & \textrm{ for } s\leq r,\label{H1-es-1}\\
\frac{1}{r} \Big(\int_{-\infty}^{-A}+\int_A^{+\infty}\Big)e^{it\alpha} \frac{1}{\alpha} e^{-}(s) e^-(r)d\alpha, & \textrm{ for } r\leq s,\\
\frac{1}{r} \Big(\int_{-\infty}^{-A}+\int_A^{+\infty}\Big)e^{it\alpha} \frac{1}{\alpha} e^{-}(r) e^+(s)d\alpha, & \textrm{ for } r\leq s,\label{H1-es-2}\\
\frac{1}{r} \Big(\int_{-\infty}^{-A}+\int_A^{+\infty}\Big)e^{it\alpha} \frac{1}{\alpha^2} e^{-}(r) e^{\pm}(s)d\alpha, & \textrm{ for } r\leq s\label{H1-es-3}
\end{align}
 are uniformly bounded for $t\in \mathbb{R}, s, r\in (0, L)$.  
 \end{lemma}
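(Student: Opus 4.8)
The plan is to exploit the one feature that distinguishes Lemma~\ref{lem-1r-key} from Lemma~\ref{lem-alpha-osc}: here every $\alpha$-integral runs over the \emph{symmetric} set $(-\infty,-A)\cup(A,+\infty)$. Because of this, the odd weight $1/\alpha$ and the even weight $1/\alpha^2$ extract, respectively, the uniformly bounded functions
\[
\left(\int_{-\infty}^{-A}+\int_A^{+\infty}\right)\frac1\alpha e^{ip\alpha}\,d\alpha=2ik(p),\qquad \left(\int_{-\infty}^{-A}+\int_A^{+\infty}\right)\frac1{\alpha^2} e^{ip\alpha}\,d\alpha=:m(p)=2\int_A^{+\infty}\frac{\cos(p\alpha)}{\alpha^2}\,d\alpha,
\]
with $k$ as in \eqref{ref-k}; as in the proof of Lemma~\ref{lem-alpha-osc}, both $k$ and $m$ are of class $C^1$ with $\lVert k\lVert_\infty,\lVert k'\lVert_\infty,\lVert m\lVert_\infty,\lVert m'\lVert_\infty\le C_0$ (note $m'=-2k$). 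No logarithm is ever produced, which is exactly why one may hope for a genuine pointwise bound here rather than merely an $L^2$ bound.

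First I would eliminate the products appearing in \eqref{H1-es-0}--\eqref{H1-es-3} by means of the product-to-sum identities (valid for $\omega=\alpha+i\beta$ by expanding the exponentials)
\[
e^{-}(a)e^{-}(b)=e^{+}(a+b)-e^{+}(a-b),\quad e^{-}(a)e^{+}(b)=e^{-}(a+b)+e^{-}(a-b),\quad e^{+}(a)e^{+}(b)=e^{+}(a+b)+e^{+}(a-b).
\]
Applied to $e^{-}(s)e^{-}(r)$, $e^{-}(s)e^{+}(r)$, $e^{-}(r)e^{+}(s)$ and $e^{-}(r)e^{\pm}(s)$, each kernel in \eqref{H1-es-0}--\eqref{H1-es-3} becomes $\frac1r$ times a fixed finite linear combination of elementary integrals $\left(\int_{-\infty}^{-A}+\int_A^{+\infty}\right)e^{it\alpha}\alpha^{-j}e^{\pm i\omega p}\,d\alpha$ with $j\in\{1,2\}$, $p$ affine in $(r,s)$ and $|p|\le 2L$. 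Writing $e^{\pm i\omega p}=e^{\mp\beta p}e^{\pm i\alpha p}$ and pulling out the $\alpha$-independent factor $e^{\mp\beta p}$, such an integral equals $e^{\mp\beta p}\cdot 2ik(t\pm p)$ for $j=1$ and $e^{\mp\beta p}\cdot m(t\pm p)$ for $j=2$. Hence each of \eqref{H1-es-0}--\eqref{H1-es-3} takes the form $\frac1r\,G(t,r,s)$, where $G$ is a finite linear combination of products $e^{\mp\beta p(r,s)}k(t\pm p(r,s))$ and $e^{\mp\beta p(r,s)}m(t\pm p(r,s))$. Since $k,m$ are bounded and the weights $e^{\mp\beta p}$ are bounded for $r,s\in[0,L]$, $G$ is bounded uniformly in $(t,r,s)$; since $k,m\in C^1$ with bounded derivatives and $p$ is affine, $G$ is $C^1$ in $(r,s)$ with $\partial_r G$ bounded uniformly in $t$.

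The one step that is not mere bookkeeping — and hence the place to be careful — is then the absorption of the singular weight $\frac1r$. I would handle it by noting that $G(t,r,s)$ is precisely the $\alpha$-integral standing on the corresponding line of the lemma, so it must vanish whenever $r=0$: in \eqref{H1-es-0}, in the unlabelled line carrying $e^{-}(s)e^{-}(r)$, and in \eqref{H1-es-2}--\eqref{H1-es-3} the integrand contains the factor $e^{-}(r)$, so $G(t,0,s)=0$ for every $s$ and therefore $|G(t,r,s)|=|G(t,r,s)-G(t,0,s)|\le\lVert\partial_r G\lVert_\infty\,r$; in \eqref{H1-es-1} the $r$-factor is $e^{+}(r)$, but the integrand contains $e^{-}(s)$ while the constraint reads $s\le r$, so $G(t,0,0)=0$ and $|G(t,r,s)|\lesssim r+s\le 2r$. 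In every case $\frac1r|G(t,r,s)|\le C$ uniformly in $t,r,s\in(0,L)$, which is the claim. Finally, the extra factors $d_0^n e^{-2iLn\alpha}$ coming from the series expansion of $1/\eta_0$ in the genuine kernels $\tilde{\tilde{\Gamma}}$ only shift $t\mapsto t-2Ln$ and change none of the above estimates; since $d_0<1$, the series $\sum_n d_0^n$ and $\sum_n(2n+1)d_0^n$ converge, so summation over $n$ preserves the uniform bound. I expect the cancellation $G|_{r=0}=0$ (together with the uniform $C^1$ control of $k$ and $m$) to be the entire substance of the argument; everything else reproduces the mechanism already used in Lemma~\ref{lem-alpha-osc}.
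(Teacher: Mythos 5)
The proposal is correct and takes essentially the same approach as the paper: both rest on (i) the boundedness and Lipschitz continuity of the symmetric oscillatory integrals (the paper's $K$ and $Q$, your $k$ and $m$), and (ii) the vanishing $G|_{r=0}=0$ supplied by $e^-(0)=0$ (or, for \eqref{H1-es-1}, by $e^-(0)=0$ together with the constraint $s\le r$). The paper realizes cancellation (ii) through explicit three-term telescoping decompositions such as $e^{-\beta r}K(t+s+r)-K(t+s+r)+K(t+s+r)-K(t+s-r)+K(t+s-r)-e^{\beta r}K(t+s-r)$ rather than your $C^1$/mean-value phrasing, but the two arguments encode the same cancellation.
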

 \begin{proof}[Proof of Lemma \ref{lem-1r-key}]
 Now we present the proof of Lemma \ref{lem-1r-key},
 the essential idea is to derive an extra $r$ from $e^-(r)$ (or $e^-(s)$ if $s\leq r$).   We only need to show \eqref{H1-es-1}, \eqref{H1-es-2} and \eqref{H1-es-3}. 
  
 For inequality \eqref{H1-es-2}, we have  
 \begin{equation}
 \frac{1}{r} \Big(\int_{-\infty}^{-A}+\int_A^{+\infty}\Big)e^{it\alpha} \frac{1}{\alpha} \left(e^{-\beta r}e^{i\alpha r}-e^{\beta r}e^{-i\alpha r}\right) \left(e^{-\beta s}e^{i\alpha s}+ e^{\beta s}e^{-i\alpha s}\right) d\alpha, \notag
 \end{equation}
 thus by symmetry we only need to treat 
  \begin{align*}
 &\frac{1}{r} \Big(\int_{-\infty}^{-A}+\int_A^{+\infty}\Big)e^{it\alpha} \frac{1}{\alpha} \left(e^{-\beta r}e^{i\alpha r}-e^{\beta r}e^{-i\alpha r}\right) e^{-\beta s}e^{i\alpha s} d\alpha\\
 =& e^{-\beta s} \frac{1}{r} \Big(\int_{-\infty}^{-A}+\int_A^{+\infty}\Big)e^{it\alpha} \frac{1}{\alpha} \left(e^{-\beta r}e^{i\alpha r}-e^{\beta r}e^{-i\alpha r}\right) e^{i\alpha s} d\alpha.
 \end{align*}
 Recalling the definition of $k(p)$, \eqref{ref-k}, we get 
\begin{align*}
&\;\;\;\;\frac{1}{r} \Big(\int_{-\infty}^{-A}+\int_A^{+\infty}\Big)e^{it\alpha} \frac{1}{\alpha} \left(e^{-\beta r}e^{i\alpha r}-e^{\beta r}e^{-i\alpha r}\right) e^{i\alpha s} d\alpha\\
&= \frac{1}{r} \left(e^{-\beta r}k(t+s+r)-e^{\beta r}k(t+s-r)\right),
\end{align*}
where 
\begin{equation}
k(p)= \int_{Ap}^{+\infty} \frac{\sin \alpha}{\alpha} d\alpha,\;  |k|\leq C, \;|k(p_1)-k(p_2)|\leq C|p_1-p_2|.      \notag
\end{equation}
Therefore
\begin{align*}
 &\;\;\;\;\frac{1}{r} \left(e^{-\beta r}k(t+s+r)-e^{\beta r}k(t+s-r)\right)\\
 &= \frac{1}{r} \Big(e^{-\beta r}k(t+s+r)-k(t+s+r) + k(t+s+r)-k(t+s-r)\\
 &\hspace{0.5cm}  + k(t+s-r)-e^{\beta r}k(t+s-r)\Big), \\
 &\leq C. 
\end{align*}

For inequality \eqref{H1-es-1}, we benefit from the fact that $s\leq r$,
\begin{equation}
 \frac{1}{r} \Big(\int_{-\infty}^{-A}+\int_A^{+\infty}\Big)e^{it\alpha} \frac{1}{\alpha} \left(e^{-\beta s}e^{i\alpha s}-e^{\beta s}e^{-i\alpha s}\right) \left(e^{-\beta r}e^{i\alpha r}+ e^{\beta r}e^{-i\alpha r}\right) d\alpha. \notag
 \end{equation}
Similar to \eqref{H1-es-2}, we only prove 
\begin{equation}
\frac{1}{r} \Big(\int_{-\infty}^{-A}+\int_A^{+\infty}\Big)e^{it\alpha} \frac{1}{\alpha} \left(e^{-\beta s}e^{i\alpha s}-e^{\beta s}e^{-i\alpha s}\right) e^{i\alpha r} d\alpha, \notag
\end{equation}
thus
\begin{align*}
&\;\;\;\;\frac{1}{r}\left(e^{-\beta s} K(t+r+s)-e^{\beta s}k(t+r-s)\right),\\
&= \frac{1}{r}\Big(e^{-\beta s} k(t+r+s)-k(t+r+s)   +k(t+r+s)- k(t+r-s)\\
 &\hspace{0.5cm}  + k(t+r-s)- e^{\beta s}k(t+r-s)\Big),\\
&\leq C \frac{s}{r} \leq C.
\end{align*}

As for the last estimate \eqref{H1-es-3}, thanks to the same reasons as the previous two inequalities,  we only consider 
\begin{equation}
\frac{1}{r} \Big(\int_{-\infty}^{-A}+\int_A^{+\infty}\Big)\frac{e^-(r)}{\alpha^2}e^{i\alpha s}d\alpha= \frac{1}{r} \left(e^{-\beta r}Q(t+s+r)- e^{\beta r}Q(t+s-r) \right),   \notag
\end{equation}
with
\begin{equation}
Q(p):=  \Big(\int_{-\infty}^{-A}+\int_A^{+\infty}\Big) \frac{1}{\alpha^2} e^{ip\alpha}d\alpha    \notag
\end{equation}
which, similar to $k(p)$ and $h(p)$, satisfies
\begin{equation}
|Q(p)|\leq C,\; |Q'(p)|= 2|k(p)|\leq C.    \notag
\end{equation}
This completes the uniform boundedness of \eqref{H1-es-0}--\eqref{H1-es-3}. 
\end{proof}

This  implies that the kernel $L(r, s)$ is uniformly bounded, thus generating a bounded operator on $L^2$.

\subsubsection{\textbf{On the estimate of \eqref{L2-t}.}}
Thanks to the preparation in  previous sections, the proof of \eqref{L2-t} is straightforward since the proof of the inequality \eqref{H2-2} already presented the main difficulty. Indeed, thanks to the decomposition of $\Gamma$ under the  $O(\frac{1}{\alpha^3})$ order term, the kernel 
\begin{equation}
K(r, s):= \left(\int_{-\infty}^{-A}+\int_A^{+\infty}\right)\alpha e^{it\alpha} \Gamma(r, s; \omega)d\alpha,  \notag
\end{equation}
can be written by
\[  k_1+ \frac{1}{\alpha} k_2+ O(\frac{1}{\alpha^2}), \]
while $k_1$ is similar to the leading term  that appear in $\tilde{K}_r$, $\frac{1}{\alpha} k_2$ and $O(\frac{1}{\alpha^2})$ can be handled by Lemma \ref{lem-alpha-osc}.

\subsection{Energy estimates: boundary  problems}\label{subsec-boum}
In this section, we further prove  the energy estimates  for the  homogeneous equation with boundary controls. The idea is to transform such a boundary controlled  problem into an inhomogeneous one. 
\begin{proposition}\label{prop-key-bd}
Let $\beta>0$ such that there is no pole on the line $\Im z= \beta$. If the radial function $U$ satisfies 
\begin{gather*}
\Delta U + U+ \omega^2 U= 0  \; \textrm{ in }  \Omega, \\
i\omega U+ a U_{\nu}=\int_0^{+\infty} e^{-i\omega t}b(t)dt  \; \textrm{ on } \partial  \Omega,
\end{gather*}
for $\omega=\alpha+ i\beta$ on the line $\Im z=\beta$,  then the radial function $w(t, x)= w(t, r)$ given by the inverse Fourier transformation \eqref{four-inverse}  satisfies
\begin{equation}
\lVert w\lVert_{\mathcal{H}^1}
\leq  Ce^{-\beta t} \int_0^{+\infty} e^{t_0\beta} \left(|b(t_0)|+|b'(t_0)|\right)d t_0+ C \left|\int_0^t b(s) ds\right|+ C|b(t)|,\; \forall t\geq 0.      \notag
\end{equation}
\end{proposition}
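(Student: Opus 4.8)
\emph{Plan of proof.}
The idea is to subtract from $u$ a short sequence of explicit boundary lifts so as to reduce everything to the inhomogeneous null--boundary problem already estimated in Proposition~\ref{prop-key-es}, plus a genuinely \emph{static} drift that is controlled through the elliptic problem at $\omega=0$. This last point is exactly where the standing hypothesis $L\neq\tan L$ enters, by way of the fact that $0$ is not a pole (Lemma~\ref{lem-eigen}(ii)).

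\emph{The main lift and the decaying part.}
Set $\mathcal{A}(t):=\int_0^t b(s)\,ds$ and $v:=u-\mathcal{A}(t)$, i.e.\ subtract the \emph{spatially constant} function $\mathcal{A}(t)$; in the Fourier picture this is $V(\omega):=U(\omega)-\tfrac{B(\omega)}{i\omega}$, legitimate since $\omega=0$ does not lie on the line $\Im\omega=\beta$. Because $\mathcal{A}(t)$ is constant in $x$, one checks at once that $v$ obeys the null boundary condition $v_t+av_\nu=0$, that $\Box v-v=\mathcal{A}(t)-b'(t)$ (again a spatially constant source), and that, since $\partial_t\mathcal{A}(t)=b(t)$ while $\nabla\mathcal{A}(t)=0$,
\[
\|u(t)\|_{\mathcal{H}^1}\ \le\ \|v(t)\|_{\mathcal{H}^1}+C\Bigl|\int_0^t b(s)\,ds\Bigr|+C\,|b(t)|,
\]
which already produces the last two terms of the claimed bound; it remains to estimate $\|v(t)\|_{\mathcal{H}^1}$. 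The source $\mathcal{A}(t)-b'(t)$ does not decay, since $\mathcal{A}(t)\to B(0):=\int_0^{\infty}b$. Pick a smooth cutoff $\theta$ with $\theta\equiv 0$ near $t=0$ and $\theta\equiv 1$ for large $t$, and write $\mathcal{A}=\mathcal{A}_{\mathrm{dec}}+B(0)\theta$ with $\mathcal{A}_{\mathrm{dec}}:=\mathcal{A}-B(0)\theta$; since $|\mathcal{A}(t)-B(0)|\le\int_t^{\infty}|b|$ and $1-\theta$ has compact support, $\mathcal{A}_{\mathrm{dec}}$ is smooth and rapidly decaying, and Fubini gives $\int_0^{\infty}e^{\beta t}\bigl(|\mathcal{A}_{\mathrm{dec}}(t)|+|\mathcal{A}_{\mathrm{dec}}'(t)|\bigr)\,dt\lesssim\frac1\beta\int_0^{\infty}e^{\beta s}\bigl(|b(s)|+|b'(s)|\bigr)\,ds$. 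Splitting $v=v_{\mathrm{dec}}+v_{\mathrm{st}}$ by linearity, with $v_{\mathrm{dec}}$ solving $\Box v_{\mathrm{dec}}-v_{\mathrm{dec}}=\mathcal{A}_{\mathrm{dec}}(t)-b'(t)$ under the null boundary condition, Proposition~\ref{prop-key-es} yields $\|v_{\mathrm{dec}}(t)\|_{\mathcal{H}^1}\le Ce^{-\beta t}\int_0^{\infty}e^{\beta t_0}(|b(t_0)|+|b'(t_0)|)\,dt_0$.

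\emph{The static part and conclusion.}
The leftover $v_{\mathrm{st}}$ solves $\Box v_{\mathrm{st}}-v_{\mathrm{st}}=B(0)\theta(t)$ with null boundary condition. Since $L\neq\tan L$, $0$ is not a pole, so $-(\Delta+1)$ with Neumann condition is invertible, with the constant function as its inverse on constants; writing $v_{\mathrm{st}}=-B(0)\theta(t)+v_*$ turns the equation for $v_*$ into one with the \emph{compactly supported} $L^2$ source $B(0)\theta''(t)$, but introduces a residual, also compactly supported, boundary datum $\propto B(0)\theta'(t)$. A further subtraction of $B(0)\theta'(t)g(x)$, for a fixed smooth radial $g$ with $g|_{\partial\overline\Omega}=0$ and $a\,\partial_\nu g|_{\partial\overline\Omega}=1$, reduces matters to a compactly supported $L^2$ source with null boundary, so Proposition~\ref{prop-key-es} applies once more and gives $\|v_*(t)\|_{\mathcal{H}^1}\lesssim e^{-\beta t}|B(0)|$, whence $\|v_{\mathrm{st}}(t)\|_{\mathcal{H}^1}\lesssim|B(0)|$ for every $t\ge 0$. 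Finally $|B(0)|\le\big|\int_0^t b\big|+\int_t^{\infty}|b|\le\big|\int_0^t b\big|+e^{-\beta t}\int_0^{\infty}e^{\beta s}|b(s)|\,ds$, and collecting all the pieces gives precisely the asserted estimate.

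\emph{Where the work is.}
The oscillatory--integral analysis is entirely outsourced to Proposition~\ref{prop-key-es}; the real difficulty is the non--decaying ``drift'' coming from the behaviour of $B(\omega)/(i\omega)$ near $\omega=0$. It must be recognised as an honest \emph{static} solution of a Neumann elliptic problem — which is exactly why $L\neq\tan L$ (equivalently, $0$ not a pole) is required — and one then has to arrange that each successive subtraction ($\mathcal{A}$, then $B(0)\theta$, then $B(0)\theta'g$) hands the data down to either a rapidly decaying or a compactly supported $L^2$ source with null boundary condition, never re--creating a non--integrable or distributional term; the smoothness of $\theta$ and the two--step character of the last lift are precisely what keep this consistent. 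A subordinate bookkeeping point is the frequency$\leftrightarrow$time dictionary ($\widehat{\mathcal{A}}=B/(i\omega)$, multiplication by $\omega$ versus time differentiation, and the $\langle\omega\rangle$--versus--$\omega$ corrections), which is routine given the expansions of Section~\ref{subsec-asym} and the standing assumption $\supp b\subset(2,+\infty)$.
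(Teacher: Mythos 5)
Your argument reaches the stated estimate, but it takes a genuinely different route from the paper, and the comparison is instructive. Both proofs start from the \emph{same} lift: subtracting $B(\omega)/(i\omega)$ from $U$ is precisely the paper's ansatz $U=V+\hat b(\omega)\phi(x)/(i\omega)$ specialised to $\phi\equiv 1$, which indeed satisfies $\phi(L)=1,\ \phi'(L)=0$. The two proofs then diverge in how they dispose of the $\omega^{-1}$ factor. The paper stays in the Fourier picture: it invokes the ``$e^{i\alpha t_0}/\omega$'' extension built into Lemma~\ref{lem-key-es} to control $V$, and it evaluates the leftover $l=u-v$ (Fourier symbol $\hat b(\omega)\phi(x)/(i\omega)$) by an explicit contour integral; since that integral equals $-\phi(x)\int_t^{\infty}b$, the paper in fact obtains the \emph{sharper} bound with no $|\int_0^t b|$ and $|b(t)|$ terms, which is the version actually used in the sequel. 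You instead work in the time domain: you split the primitive $\mathcal{A}(t)=\int_0^t b$ into a rapidly decaying piece $\mathcal{A}_{\mathrm{dec}}$ plus $B(0)\theta$, feed $\mathcal{A}_{\mathrm{dec}}-b'$ straight into Proposition~\ref{prop-key-es}, and peel off the static tail $B(0)\theta$ by two explicit subtractions (the constant Neumann steady state, then a compactly supported boundary lift) so that Proposition~\ref{prop-key-es} can be applied once more. This entirely bypasses the $1/\omega$ variant of Lemma~\ref{lem-key-es} at the price of the cutoff bookkeeping and a slightly weaker (but still sufficient) final estimate.

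Two smaller observations. First, the identification ``$v=u-\mathcal{A}(t)$, i.e.\ $V=U-B(\omega)/(i\omega)$'' is not literal: along $\Im\omega=\beta>0$ the inverse transform of $B(\omega)/(i\omega)$ is $\mathcal{A}(t)-B(0)=-\int_t^{\infty}b$, not $\mathcal{A}(t)$, because shifting the contour past the pole at $\omega=0$ costs a residue $B(0)$. This is harmless for your estimate (the offending constant $B(0)$ is precisely what you absorb into $|\int_0^t b|$), but it is worth noticing that the \emph{correct} time function $u-(\mathcal{A}-B(0))$ already carries the rapidly decaying source $-\int_t^{\infty}b-b'$ under null boundary conditions, so that the $\theta$--split and the static solve are actually unnecessary and dropping them recovers the paper's sharper conclusion. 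Second, your appeal to $L\neq\tan L$ (equivalently, $0$ not being a pole) plays no role in this proposition: you never invert the Neumann operator $\Delta+1$, you simply write down the explicit constant $-B(0)$; the paper's proof of this proposition likewise does not use $L\neq\tan L$, which only enters later when one needs the full strip $[-C,\beta]$ to contain finitely many poles.
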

Let the radial smooth function $g(x)$ be satisfying $
g(L)=0,\; g'(L)=1/a$. Let as decompose  $U$ as $$U= V+ \hat{b}(\omega)g(x).$$ Then, the  function $V$ satisfies,
\begin{gather*}
\Delta V + V+ \omega^2 V= -\hat{b}(\omega)(\Delta g+ g)- \omega^2\hat{b}(\omega)g(x)  \; \textrm{ in } \Omega, \\
i\omega v+ a V_{\nu}=0 \; \textrm{ on } \partial \Omega.
\end{gather*}
Because  the preceding source term can be written as 
\begin{align*}
\hat{b}(\omega)(\Delta g+ g)(x)&= \int_0^{+\infty}e^{-it\omega}b(t)(\Delta g+ g)(x)dt, \\
\omega^2\hat{b}(\omega)g(x)&= -\int_0^{+\infty}e^{-it\omega}b''(t)dt g(x),
\end{align*}
thanks to Proposition \ref{prop-key-es},  we obtain
\begin{equation}
\lVert v(t)\lVert_{\mathcal{H}^1}\leq Ce^{-\beta t} \int_0^{+\infty} e^{t_0\beta} \left(|b(t_0)|+|b''(t_0)|\right)d t_0.       \notag
\end{equation}
Actually,  we  can further reduce the regularity of $b(t)$ in the preceding formula.  Indeed, one needs to consider some function $g(\omega, x)$ instead of $g(x)$, more precisely, let 
\begin{equation}
g(\omega, x):= \frac{\phi(x)}{i\omega },      \notag
\end{equation}
provided that the smooth function $\phi$ satisfies $\phi(L)=1, \phi'(L)=0$.   Let $$U= V+ \hat{b}(\omega)g(\omega, x).$$ Then, the function $V$ satisfies,
\begin{gather*}
\Delta V + V+ \omega^2 V= -\frac{\hat{b}(\omega)}{i\omega}(\Delta \phi+ \phi)+ i\omega\hat{b}(\omega)g(x)  \; \textrm{ in }  \Omega, \\
i\omega v+ a V_{\nu}=0 \; \textrm{ on } \partial  \Omega.
\end{gather*}
Because the source term can be written as 
\begin{align*}
\frac{\hat{b}(\omega)}{\omega}(\Delta \phi+ \phi)&= \int_0^{+\infty}\frac{e^{-it\omega}}{\omega}b(t)(\Delta \phi+ \phi)(x)dt, \\
i\omega\hat{b}(\omega)\phi&= \int_0^{+\infty}e^{-it\omega}b'(t)dt \phi,
\end{align*}
by applying Proposition \ref{prop-key-es} and Lemma \ref{lem-key-es} we get 
\begin{equation}
\lVert v(t)\lVert_{\mathcal{H}^1}\leq Ce^{-\beta t} \int_0^{+\infty} e^{t_0\beta} \left(|b(t_0)|+|b'(t_0)|\right)d t_0, \; \forall t\geq 0.    \notag
\end{equation}

On the other hand, for $l:= u-v$ that satisfies 
\begin{equation}
\hat{l}(\omega)= \frac{\hat{b}(\omega)}{i\omega} \phi,    \notag
\end{equation}
we have
\begin{equation}
e^{\beta t} l(t)=\phi \int_{-\infty}^{+\infty}e^{i\alpha t}\frac{\int_0^{+\infty} e^{-i\alpha s} e^{\beta s}b(s)ds}{i\alpha-\beta} d\alpha= \phi \int_0^{+\infty}\int_{-\infty}^{+\infty}\frac{e^{i\alpha(t-s)}}{i\alpha-\beta}d\alpha e^{\beta s} b(s) ds.  \notag
\end{equation}
Because 
\begin{equation}
\int_{-\infty}^{+\infty}\frac{e^{i\alpha(t-s)}}{i\alpha-\beta}d\alpha    \notag
\end{equation}
is uniformly bounded, we know that 
\begin{equation}
 |l(t)|\lesssim e^{-\beta t} \int_0^{+\infty} e^{\beta s} |b(s)| ds.   \notag
\end{equation}
Concerning the term $e^{\beta t}l_t(t)$, one needs to estimate 
\begin{equation}
\int_0^{+\infty}\left(\int_{-\infty}^{+\infty}\frac{e^{i\alpha(t-s)}}{i\alpha-\beta}d\alpha\right)_t e^{\beta s} b(s) ds.    \notag
\end{equation}
Because the term 
\begin{equation}
\left(\int_{-\infty}^{+\infty}\frac{e^{i\alpha(t-s)}}{(i\alpha-\beta) i\alpha}d\alpha\right)_t    \notag
\end{equation}
is uniformly bounded, by taking the difference it suffices to estimate
\begin{equation}
\left(\left(\int_{-\infty}^{-A}+ \int_A^{+\infty}\right)\frac{e^{i\alpha(t-s)}}{\alpha}d\alpha\right)_t,   \notag
\end{equation}
which, thanks to the function $k(p)$, equals to $k'(t-s)$, therefore  is  obviously uniformly bounded.\\

Thus, combining the preceding estimates on $l$ and $v$, we get 
\begin{equation}
\lVert u(t)\lVert_{\mathcal{H}^1}\lesssim e^{-\beta t} \int_0^{+\infty} e^{t_0\beta} \left(|b(t_0)|+|b'(t_0)|\right)d t_0, \; \forall t\geq 0.     \notag
\end{equation}
\begin{remark}
It should be pointed out that by  the choice of $b(t)$, it is  essential  to let  
\begin{equation}
\chi_t u(t, L)_t\in L^1, \label{trace-H1}
\end{equation}
for  $u$, some  solution of 
\begin{gather}
\Box u- u=h(t, x)  \textrm{ in }  \Omega, \label{wave-free-1}\\
(u_t+ a u_{\nu})=0   \textrm{ on } \partial \Omega,   \\
u(0, x)= u_0,\; u_t(0, x)= v_0. \label{wave-free-3}
\end{gather}
Indeed, this is exactly the trace estimate that we have previously proved in Lemma \ref{lem-a-es}.
\end{remark}

\subsection{Construction of feedbacks: exponential stabilization of the linearized system}\label{sec:constructionfeedback}
Thanks to the energy estimates in the preceding two sections, we are in a position to conclude the following bound for the solution $u|_{(0,2)}$ of \eqref{wave-free-1}--\eqref{wave-free-3} and the solution $w=\chi u$ of \eqref{wave-op-1}--\eqref{wave-op-3}. \\

For $t\leq 2$, thanks to Lemma \ref{lem-a-es},  there is
\begin{equation}
\lVert u\lVert_{\mathcal{H}^1}\lesssim  \int_0^{t} \lVert h(s, \cdot)\lVert_{L^2(\overline{\Omega})}  ds+ \lVert u_0\lVert_{H^1(\overline{\Omega})}+ \lVert v_0\lVert_{L^2(\overline\Omega)}.
\end{equation}

 For $t\geq 2$, thanks to the estimates that obtained in Section \ref{subsec-inh}--\ref{subsec-boum}, we get exponential decay of $u$ with a decay rate $\beta$:
\begin{align*}
\lVert u(t)\lVert_{\mathcal{H}^1}&= \lVert w(t)\lVert_{\mathcal{H}^1},\\
&\lesssim  e^{-\beta t}\int_0^{+\infty} e^{s \beta} \lVert h_0(s)\lVert_{L^2(\overline{\Omega})} ds+ e^{-\beta t} \int_0^{+\infty} e^{s\beta} \left(|b_0(s)|+|b_0'(s)|\right)d s, \\
&\lesssim e^{-\beta t}\left(\int_0^{+\infty} e^{s \beta} \lVert h(s)\lVert_{L^2(\overline{\Omega})} ds+ \lVert u_0\lVert_{H^1(\overline{\Omega})}+ \lVert v_0\lVert_{L^2(\overline{\Omega})}\right)\\
&\;\;\;\;\;+ e^{-\beta t}\left(\int_0^{2} e^{s \beta} \lVert h(s)\lVert_{L^2(\overline{\Omega})} ds+ \lVert u_0\lVert_{H^1(\overline{\Omega})}+ \lVert v_0\lVert_{L^2(\overline{\Omega})}\right) \\
&\;\;\;\;\;   +e^{-\beta t} \int_0^{+\infty} e^{s\beta} \left(|b(s)|+|b'(s)|\right)d s,\\
&\lesssim e^{-\beta t}\left(\int_0^{+\infty} e^{s \beta} \left(\lVert h(s)\lVert_{L^2(\overline{\Omega})}+|b(s)|+|b'(s)|\right)  ds+ \lVert u_0\lVert_{H^1(\overline{\Omega})}+ \lVert v_0\lVert_{L^2(\overline{\Omega})}\right),
\end{align*}
provided that the control $b(t)$ satisfies {\bf Assumption 1}  that is raised in Section \ref{subsec-str}. To be more specific:
\begin{itemize}
\item[(i)] \textit{$b(t)= 0$ on the time interval $t\in [0, 2]$;}
\item[(ii)]  \textit{$b(t)$ is real valued;}
\item[(iii)]  \textit{$b(t)$ and $b'(t)$ decreases exponentially;}
\item[(iv)]  \textit{for any pole $\omega_j$ satisfying $\Im \omega_j<\beta$, we have that $B(\omega_j)$ coincidences the value of $i\omega_j U+ aU_{\nu}$ for the solution $U$ of
\begin{equation}
\Delta U+ (1+\omega_j^2)U= -H  \textrm{ in }  \Omega.  \notag
\end{equation} }
\end{itemize}

We mainly concentrate on the choice of $b(t)$ such that  the  property $(iv)$ holds, as the others  are simple  to achieve. Actually, by taking $\psi= rU$ we get 
\begin{gather*}
\partial_r^2 \psi + (1+ \omega^2)\psi= -r\int_0^{+\infty} e^{-i\omega_j t}\big(\chi(t)h(t)+ \chi_{tt}u(t)+ 2\chi_t u_t(t)\big) dt, \\
\psi(0)=0, \; (iL\omega-a)\psi(L)+ aL \psi'(L)=L^2\int_0^{+\infty} e^{-i\omega_j t} \left(b(t)+ \chi_t u(t)\right) dt.
\end{gather*}
Assume that there are exactly $N$ poles in the region  $\Im z\leq \beta$, and there are given by $\{\omega_j\}_{j=1}^N$.   Because $\omega=\omega_j$ is a pole, the above equations admit solutions only for some well-chosen boundary values. More precisely,  for any given $f(r)$ we need to find the unique value $l(\omega_j)$ such that 
\begin{gather*}
\partial_r^2 \psi + (1+ \omega_j^2)\psi= f(r), \\
\psi(0)=0, \; (iL\omega_j-a)\psi(L)+ aL \psi'(L)=l(\omega_j),
\end{gather*}
has a $C^2$ solution.  Since $\omega_j$ is a pole such that the above equations with $f=l=0$ admit non-trivial solutions, we are allowed to set $\psi'(0)=0$.  Therefore, the function $\psi$ satisfies
\begin{gather*}
\partial_r^2 \psi + (1+ \omega_j^2)\psi= f(r), \\
\psi(0)=\psi'(0)= 0, \; (iL\omega_j-a)\psi(L)+ aL \psi'(L)=l(\omega_j).
\end{gather*}
By ignoring the third boundary condition and solving the preceding ODE, we obtain 
\begin{equation}
|\psi(r)|\lesssim r\lVert f\lVert_{L^2},\;    |\psi'(r)|\lesssim  \lVert f\lVert_{L^2},   \notag
\end{equation}
thus the value $l$ satisfies
\begin{equation}
|l(\omega_j)|\lesssim \lVert f\lVert_{L^2}.   \notag
\end{equation}
Therefore, the value on right hand side should linearly depend on  $(h, u_0, v_0)$:
\begin{equation*}
l(\omega_j)=  L_{\omega_j} \left(-r\int_0^{+\infty} e^{-i\omega_j t}\left(\chi(t)h(t)+ \chi_{tt}u(t)+ 2\chi_t u_t(t)\right) dt\right)=: \mathcal{L}_{\omega_j}^1(h, u_0, v_0),
\end{equation*}
which also means that 
\begin{equation}
\left|\int_0^{+\infty} e^{-i\omega_j t} \left(b(t)+ \chi_t u(t)\right) dt\right| \lesssim \left\lVert r\int_0^{+\infty} e^{-i\omega_j t}\left(\chi(t)h(t)+ \chi_{tt}u(t)+ 2\chi_t u_t(t)\right) dt\right\lVert_{L^2_r}.   \notag
\end{equation}
Moreover, the functionals  also satisfy the  conjugate property: $l(-\bar \omega_j)= \overline{l}(\omega_j)$, $\mathcal{L}^1_{-\bar{\omega}_j}= \overline{\mathcal{L}^1}_{\omega_j}$. \\

On the other hand, by the definition of $\chi_t u(t)$ we have 
\begin{equation}
L^2\int_0^{+\infty} e^{-i\omega_j t} \left(b(t)+ \chi_t u(t)\right) dt=: L^2\int_0^{+\infty} e^{-i\omega_j t} b(t) dt+ \mathcal{L}_{\omega_j}^2(h, u_0, v_0),  \notag
\end{equation}
thus  
\begin{equation}
\int_0^{+\infty} e^{-i\omega_j t} b(t) dt=\frac{1}{L^2} \left(\mathcal{L}_{\omega_j}^1(h, u_0, v_0)-\mathcal{L}_{\omega_j}^2(h, u_0, v_0)\right)= {\bf r}_{\omega_j}(h, u_0, v_0),   \notag
\end{equation}
with $\bf r_{\omega_j}$ linearly depending on $(h, u_0, v_0)$, such that ${\bf r}_{-\bar{\omega_j}}= \bar{{\bf r}}_{\omega_j}$.\\
Because 
\begin{align*}
&\;\;\;\;|\chi_t u(t, L)|+\left\lVert r\int_0^{+\infty} e^{-i\omega_j t}\left(\chi_{tt}u(t)+ 2\chi_t u_t(t)\right) dt\right\lVert_{L^2_r}\\&\lesssim \lVert (u_0, v_0)\lVert_{\mathcal{H}^1}+ \int_0^2\lVert h(s, \cdot)\lVert_{L^2(\overline{\Omega})} ds,
\end{align*}
we obtain
\begin{equation*}
\left|\int_0^{+\infty} e^{-i\omega_j t} b(t)dt\right| \lesssim \left\lVert \int_0^{+\infty} e^{-i\omega_j t}h(t) dt\right\lVert_{L^2(\Omega)}+ \lVert (u_0, v_0)\lVert_{\mathcal{H}^1}+ \int_0^2\lVert h(s, \cdot)\lVert_{L^2(\overline{\Omega})} ds.
\end{equation*}
Therefore, condition $(iv)$ means that  for every pole $\omega_j$ there is
\begin{equation}
\int_0^{+\infty} e^{-i\omega_j t} b(t)dt= {\bf r}_{\omega_j}(h, u_0, v_0)    
\end{equation}
with the functional ${\bf r}_{\omega_j}$ satisfying conjugate property and linearly depending on $(h, u_0, v_0)$,
\begin{align*}
|{\bf r}_{\omega_j}(h, u_0, v_0)|&\lesssim \int_0^{+\infty} e^{\beta_j t}\lVert h(t, \cdot)\lVert_{L^2(\overline{\Omega})} dt+ \lVert (u_0, v_0)\lVert_{\mathcal{H}^1}+ \int_0^2\lVert h(s, \cdot)\lVert_{L^2(\Omega)} ds,\\
&\lesssim \int_0^{+\infty} e^{(\beta-\varepsilon)t}\lVert h(t, \cdot)\lVert_{L^2(\overline{\Omega})} dt+ \lVert (u_0, v_0)\lVert_{\mathcal{H}^1}.
\end{align*}
Hence, there exists some constant $C_N$  such that 
\begin{equation*}
|{\bf r}_{\omega_j}(h, u_0, v_0)|\leq C_N \|(h, u_0, v_0)\|_{\mathcal{B}}, \forall (h, u_0, v_0)\in \mathcal{B}, \forall j\in \{1, 2,..., N\}.
\end{equation*}

Now we prove the following lemma which eventually will lead to the feedback $b(t)$ that we are seeking for.  Recalling \eqref{norm-B} on  the  norm for the triple $(h, u_0, v_0)$, we have 
\begin{lemma}\label{lem:bconstruction}
There exists some $C_0>0$ such that  for any $ (h, u_0, v_0)\in \mathcal{B}$, 
we can  find some control $b(t)\in\mathbb{R}$ compactly supported in $(2, 4)$ that verifies
\begin{itemize}
\item[(i)] $\int_0^{+\infty}e^{s\beta} (|b(s)|+|b'(s)|) ds\leq C_0\lVert (h, u_0, v_0)\lVert_{B}$;
\item[(ii)] $\int_0^{+\infty} e^{-i\omega_j t} b(t)dt= r_{\omega_j}(h, u_0, v_0), \forall j\in \{1, 2,..., N\}$.
\end{itemize}
\end{lemma}
\begin{proof}
This is indeed a moment problem that usually appears in  control theory, except that here we only have finitely many moments while the moment problem concerning exact controllability of partial differential equations is always  composed of infinitely many moments (see \cite{2009-Tucsnak-Weiss-book, Beauchard-2005, Avdonin-Ivanov} for the moment theory).  However, normally the moment problem on bi-orthogonal sequences only provides complex-valued functions,  here we need to select a real-valued  control term  as we are working on dispersive equations.  Generally speaking, this is not possible, as  for conjugate pairs $(\omega_j, -\bar{\omega}_j)$ the integrals $\int e^{-i\omega t}b(t)dt$ should be conjugate for real controls $b(t)$, while, on the other hand, the moments $r_{\omega_j}$ are not necessarily selected to verify the same conjugate property. For us, however, it is exactly the conjugacy fact $r_{-\bar\omega_j} = \bar r_{\omega_j}$ that allows us to solve this problem, and that is the reason why we always ensured this conjugation property before. 
\\
Observing the conjugacy  on $(\omega_j, -\bar \omega_j)$,  it suffices to verify the condition on those $\omega$ such that their real part is non negative.  Indeed, after simple calculation we are allowed to  divide $\{\omega_j\}$ and $\{r_{\omega_j}\}$ into two sets: 
\begin{gather*}
r_{\omega_j}= r_{i \beta_j}= r(j), \textrm{ for } j\in\{1, 2,..., K\},\\
r_{\omega_j}= r_{\alpha_j+ i\beta_j}= r^1(j)+ i r^2(j), \alpha_j\neq 0,  \textrm{ for } j\in\{K+1, ..., J\},\\
r_{-\bar\omega_j}= r_{-\alpha_j+ i\beta_j}= r^1(j)- i r^2(j),  \alpha_j\neq 0,  \textrm{ for } j\in\{K+1, ..., J\}.
\end{gather*}

We are able to show the existence of  real-valued smooth functions $b_j(t), b_j^1(t), b_j^2(t)$ supported on $(2, 4)$  such that 
\begin{gather*}
\int_0^{+\infty} e^{-i\omega_k t} b_j(t)dt= \delta_{j, k},\; \forall j, k\in \{1, 2,..., K\},\\
\int_0^{+\infty} e^{-i\omega_k t} b_j(t)dt=\int_0^{+\infty} e^{i\bar\omega_k t} b_j(t)dt= 0, \;\forall k\in \{K+1,..., J\},\\
\int_0^{+\infty} e^{-i\omega_k t} b_j^1(t)dt= \delta_{j, k},\; \forall j\in \{K+1, ..., J\}, k\in \{1, 2,..., J\},  \\
\int_0^{+\infty} e^{-i\omega_k t} b_j^2(t)dt= i\delta_{j, k},\; \forall j\in \{K+1, ..., J\}, k\in \{1, 2,..., J\}.
\end{gather*}
Indeed, for example, for $\{(\alpha_k, \beta_k)\}_{k\leq N}$  given with $\alpha_k> 0$ and $(\alpha_i, \beta_i)\neq (\alpha_j, \beta_j)$, we need to find $b(t)$ supported on $(2, 4)$ such that 
\begin{gather*}
\int_2^4 e^{\beta_1 t}\cos (\alpha_1 t) b(t)=1, \;\; \int_2^4 e^{\beta_1t}\sin (\alpha_1 t) b(t)=0, \\
\int_2^4 e^{\beta_kt}\cos (\alpha_k t) b(t)=0, \;\; \int_2^4 e^{\beta_kt}\sin (\alpha_k t) b(t)=0,\; k\geq 2.
\end{gather*}
This is possible thanks to the fact that $\{e^{\beta_k t}\cos (\alpha_k t)|_{(2, 4)}, e^{\beta_k t}\sin (\alpha_k t)|_{(2, 4)}\}_k$ are linearly independent. 

Therefore, by the above choice of $\{b_j, b_j^1, b_j^2\}$  the control term  $b(t)$ that satisfies the moment problem $(ii)$ can be selected as a combination of them:
\begin{equation}
b(t):=\sum_{j=1}^K r(j)b_j(t)+ \sum_{j=K+1}^J \left(r^1(j)b_j^1+ r^2(j) b_j^2\right),       \notag
\end{equation}
which of course verifies  
\begin{align*}
\int_0^{+\infty}e^{s\beta} (|b(s)|+|b'(s)|) ds&\leq C\left(\sum_{j=1}^K |r(j)|+  \sum_{j=K+1}^J (|r^1(j)|+|r^2(j)|)\right)\\&\leq C_0\lVert (h, u_0, v_0)\lVert_{\mathcal{B}}. \notag
\end{align*}
\end{proof}
 
As a consequence of  the above lemma, we immediately derive   the exponential decay of the energy by selecting the control term concerning the lemma.
\begin{align*}
\lVert u(t)\lVert_{\mathcal{H}^1}&\lesssim \int_0^{t} \lVert h(t, \cdot)\lVert_{L^2(\overline{\Omega})} dt+ \lVert (u_0, v_0)\lVert_{\mathcal{H}^1},  \textrm{ for } t\leq 2,\\
\lVert u(t)\lVert_{\mathcal{H}^1}&\lesssim e^{-\beta t}\left(\int_0^{+\infty} e^{\beta t}\lVert h(t, \cdot)\lVert_{L^2(\overline{\Omega})} dt+ \lVert (u_0, v_0)\lVert_{\mathcal{H}^1}\right), \textrm{ for } t> 2.
\end{align*}

\section{Open-loop stabilization of the NLKG}\label{sec-stab-nl}
In this section, we stabilize the nonlinear Klein-Gordon equation around the static solution $u\equiv 1$.  The idea is,  as usual,  to regard the nonlinear term as a perturbed forcing term and to use the linearized stabilization.  By considering $u=1+ v$ as well as the same change of variables as in Section \ref{sec-stab-li},
\begin{equation}
\overline{v}(t, x):= v\left(\frac{t}{\sqrt{2}},\; \frac{x}{\sqrt{2}}\right),\; \overline{b}(t):= \frac{1}{\sqrt{2}}b\left(\frac{t}{\sqrt{2}}\right), \; \overline{\Omega}:= B_L(0),  \; \overline{\beta}:= \frac{1}{\sqrt{2}}\beta, \notag
\end{equation}
 and still denoting the new variables by the old ones if there is no risk of confusion, we derive the following nonlinear equation \begin{gather*}
 \begin{cases}
\Box u- u=\frac{3}{2}u^2+ \frac{1}{2}u^3 \textrm{ in }  \Omega,\\
u_t+ au_{\nu}=b(t) \textrm{ on } \partial  \Omega, \\
u(0, x)=u_0, u_t(0, x)=v_0,
\end{cases}
\end{gather*} 
where the initial state $(u_0, v_0)$ is small.  We will show that for sufficiently small initial data, we can find control $b(t)$ that is supported on $(2, 4)$ such that the solution of the above nonlinear equation decay exponentially with a decay rate $\beta$:  if $a$ belongs to $\mathcal{A}(L)$, then $\beta$ is chosen sufficiently small (as the value $\beta_*$ stated before the statement of Theorem \ref{thm-li-key}); if $a$ does not belong to $\mathcal{A}(L)$, then $\beta$ can be chosen as any positive number that is strictly smaller than  $\frac{1}{2L} \log{\frac{1+a}{1-a}}$.   

 Since we are dealing with sub-critical nonlinear terms, we perform the classical fixed point argument benefiting the linearized result  Theorem \ref{thm-li-key}  to get the solution.
Let us introduce a new function space,
\begin{align*}
&\mathcal{D}:= \big\{ u\in C^0([0, +\infty); H^1_{rad}(\Omega))\cap  C^1([0, +\infty); L^2_{rad}(\Omega)):\\
&\hspace{2.5cm} \sup_{t\geq 0} e^{\beta t}(\lVert u(t)\lVert_{H^1(\Omega)}+\lVert u_t(t)\lVert_{L^2(\Omega)})<+\infty\big\}    \notag
\end{align*}
with its norm given by 
\begin{equation}
\lVert f(t, x)\lVert_{\mathcal{D}}:= \sup_{t\geq 0} e^{\beta t}(\lVert u(t)\lVert_{H^1(\Omega)}+\lVert u_t(t)\lVert_{L^2(\Omega)}), \forall f\in \mathcal{D}. \notag
\end{equation}

Let us  select some positive number $\varepsilon_{\beta}$ that will be fixed later on, and let $\varepsilon\in (0, \varepsilon_{\beta})$.  For any given initial state $(u_0, v_0)$ such that 
\begin{equation}
\lVert (u_0, v_0)\lVert_{\mathcal{H}^1}\leq \varepsilon,    \notag
\end{equation}
and for any function $v\in \mathcal{D}(2C_{\beta} \varepsilon)$, $i. e. $
\begin{equation}
\lVert v(t, x)\lVert_{\mathcal{D}}\leq 2C_{\beta} \varepsilon,    \notag
\end{equation}
we define the map $\mathcal{T}$ that maps $v(t, x)$ to  $u(t, x)$ as the solution of 
\begin{gather*}
\begin{cases}
\Box u- u=\frac{3}{2}v^2+ \frac{1}{2}v^3 \textrm{ in }  \Omega,\\
u_t+ au_{\nu}=b\left(\frac{3}{2}v^2+ \frac{1}{2}v^3, u_0, v_0\right)(t)  \textrm{ on } \partial \Omega, \\
u(0, x)=u_0, u_t(0, x)=v_0,
\end{cases}
\end{gather*} 
where $b\left(\frac{3}{2}v^2+ \frac{1}{2}v^3, u_0, v_0\right)(t)$ is chosen by Theorem \ref{thm-li-key} in order to stabilize the linear system.  We will show that for a good choice of $\varepsilon_{\beta}$ the map  $\mathcal{T}$ is actually a contraction on the Banach space $\mathcal{D}(2C_{\beta} \varepsilon)$, hence admit a fixed point $u\in \mathcal{D}(2C_{\beta} \varepsilon)$ as a solution of the nonlinear system, and which decays exponentially. \\

First, we show that 
\begin{equation}
\mathcal{T}: \mathcal{D}(2C_{\beta} \varepsilon)\rightarrow \mathcal{D}(2C_{\beta} \varepsilon).     \notag
\end{equation}
Indeed, for any $v\in \mathcal{D}(2C_{\beta} \varepsilon)$ , thanks to Theorem \ref{thm-li-key}, the solution $u=\mathcal{T}(v)$ satisfies
\begin{equation}
\lVert u(t)\lVert_{\mathcal{H}^1}\leq C_{\beta}e^{-\beta t}\left(\int_0^{+\infty} e^{\beta t}\lVert h(t, \cdot)\lVert_{L^2(\overline{\Omega})} dt+ \lVert (u_0, v_0)\lVert_{\mathcal{H}^1}\right),      \notag
\end{equation}
where $h= \frac{3}{2}v^2+ \frac{1}{2}v^3$.
 Therefore,  
 \begin{align*}
\lVert u(t)\lVert_{\mathcal{D}}
 &= \sup_{t\geq 0} e^{\beta t}\lVert u(t)\lVert_{H^1(\overline{\Omega})}, \\
 &\leq \sup_{t\geq 0} C_{\beta}\left(\int_0^{+\infty} e^{\beta t}\lVert h(t, \cdot)\lVert_{L^2(\overline{\Omega})} dt+ \lVert (u_0, v_0)\lVert_{\mathcal{H}^1}\right), \\
 &=  C_{\beta}\left(\int_0^{+\infty} e^{\beta t}\lVert \frac{3}{2}v^2+ \frac{1}{2}v^3\lVert_{L^2(\overline{\Omega})} dt+ \lVert (u_0, v_0)\lVert_{\mathcal{H}^1}\right), \\
 &\leq C_{\beta}\left(\int_0^{+\infty} e^{\beta t}(\frac{3}{2}\lVert v(t)\lVert_{H^1}^2+ \frac{1}{2}\lVert v(t)\lVert_{H^1}^3)dt+ \varepsilon\right),   \\
 &\leq  C_{\beta}\left(\int_0^{+\infty} e^{\beta t}(\frac{3}{2}(e^{-\beta t}2\varepsilon C_{\beta})^2+ \frac{1}{2}(e^{-\beta t}2\varepsilon C_{\beta})^3)dt+ \varepsilon\right), \\
 &\leq C_{\beta}\left(\frac{6C_{\beta}^2\varepsilon^2}{\beta}+ \frac{2C_{\beta}^3\varepsilon^3}{\beta}    + \varepsilon\right)\leq 2C_{\beta}\varepsilon, 
 \end{align*}
 provided that 
\begin{equation}\label{cond-1}
\frac{6C_{\beta}^2\varepsilon^2}{\beta}+ \frac{2C_{\beta}^3\varepsilon^3}{\beta} \leq \varepsilon,
\end{equation} 
where we used the Sobolev embedding $H^1(\Omega)\hookrightarrow L^6(\Omega)$.\\

Next, we show that under a suitable choice of $\varepsilon$, the map $\mathcal{T}$ is indeed a contraction. We select two functions $v_1, v_2\in \mathcal{D}(2C_{\beta}\varepsilon)$ and  suppose that $u_i$, $i= 1, 2$, are  solutions of 
 \begin{gather*}
 \begin{cases}
\Box u_i- u_i=\frac{3}{2}v_i^2+ \frac{1}{2}v_i^3 \textrm{ in } \Omega,\\
u_{it}+ au_{i\nu}=b\left(\frac{3}{2}v_i^2+ \frac{1}{2}v_i^3, u_0, v_0\right)(t) \textrm{ on } \partial\Omega, \\
u_i(0, x)=u_0, u_{it}(0, x)=v_0.
\end{cases}
\end{gather*} 
By considering the difference $u:= u_1-u_2$, thanks to the fact that $b$ is linear with respect to $(h, u_0, v_0)$,  we get 
\begin{gather*}
\begin{cases}
\Box u- u=h, \textrm{ in } \Omega,\\
u_t+ au_{\nu}=b(h, 0, 0) \textrm{ on } \partial \Omega, \\
u(0, x)=0, u_t(0, x)=0,
\end{cases}
\end{gather*} 
where 
\begin{equation}
h:= \frac{3}{2}(v_1-v_2)(v_1+v_2)+ \frac{1}{2}(v_1-v_2)(v_1^2+v_1v_2+v_2^2).  \notag
\end{equation}
Therefore, 
\begin{align*}
 & \;\;\;\;\;\lVert u(t)\lVert_{\mathcal{D}}, \\
 &\leq \sup_{t\geq 0} C_{\beta}\left(\int_0^{+\infty} e^{\beta t}\lVert h(t, \cdot)\lVert_{L^2(\overline{\Omega})} dt+ \lVert (0, 0)\lVert_{\mathcal{H}^1}\right), \\
 &=  C_{\beta}\left(\int_0^{+\infty} e^{\beta t}\lVert \frac{3}{2}(v_1-v_2)(v_1+v_2)+ \frac{1}{2}(v_1-v_2)(v_1^2+v_1v_2+v_2^2)\lVert_{L^2(\overline{\Omega})} dt\right), \\
 &\leq C_{\beta}\left(\int_0^{+\infty} e^{\beta t}(\frac{3}{2}\lVert v_1(t)-v_2(t)\lVert_{H^1} \lVert v_1(t)+v_2(t)\lVert_{H^1}\right.\\
& \hspace{5.5cm}  \left. + \frac{1}{2}\lVert v_1(t)-v_2(t)\lVert_{H^1}\lVert v_1(t)+v_2(t)\lVert_{H^1}^2)dt\right),   \\
 &\leq  C_{\beta}\left(\int_0^{+\infty} e^{\beta t}(\frac{3}{2}(e^{-\beta t}4\varepsilon C_{\beta}) e^{-\beta t}\lVert v_1-v_2\lVert_{\mathcal{D}}  + \frac{1}{2}(e^{-\beta t}4\varepsilon C_{\beta})^2)e^{-\beta t}\lVert v_1-v_2\lVert_{\mathcal{D}} dt\right), \\
 &\leq \lVert v_1-v_2\lVert_{\mathcal{D}}  C_{\beta}\left(\frac{12C_{\beta}\varepsilon}{\beta}+ \frac{8C_{\beta}^2\varepsilon^2}{\beta}\right) \leq \frac{1}{2}\lVert v_1-v_2\lVert_{\mathcal{D}},
 \end{align*}
provided that 
\begin{equation}\label{cond-2}
\frac{12C_{\beta}^2\varepsilon}{\beta}+ \frac{8C_{\beta}^3\varepsilon^2}{\beta}\leq \frac{1}{2}.
\end{equation}
Condition  \eqref{cond-2},  together with \eqref{cond-1},  characterize the choice of $\varepsilon_{\beta}>0$.  
 Then, by Banach's fixed point theorem we find a fixed point $u\in \mathcal{D}(2C_{\beta} \varepsilon)$ which is exactly the solution. Thus for initial data $(u_0, v_0)$ that satisfies $\lVert (u_0, v_0)\lVert_{\mathcal{H}^1}=\varepsilon$,  the unique  solution $u\in \mathcal{D}(2C_{\beta} \varepsilon)$  verifies
\begin{equation}
\lVert u(t)\lVert_{H^1}+\lVert u_t(t)\lVert_{L^2}\leq e^{-\beta t} 2C_{\beta}\varepsilon.    \notag
\end{equation}
Moreover, by the definition of the control,  $b(t)$  verifies
\begin{equation}
b(t)= \sum_{k=1}^N \tilde l_k(u_0, v_0) b_k(t)    \notag
\end{equation}
with $\tilde l_k(u_0, v_0)$ depending continuously on $(u_0, v_0)\in \mathcal{H}^1$ satisfying
\begin{equation}
 \sum_{k=1}^N |\tilde l_k(u_0, v_0)|\leq C_{\beta} \left(\int_0^{+\infty}e^{\beta t} \lVert\frac{3}{2}u^2(t)+\frac{1}{2}u^3(t)\lVert_{L^2}+\varepsilon\right)\leq 2\varepsilon C_{\beta}.    \notag
\end{equation}

\section{Closed-loop stabilization of the NLKG}\label{sec-stab-nl-closed}

We are now in a position to prove the stronger closed-loop stabilization result.   Without loss of generality, we assume that $s\in [(M_1)T_{\beta}, MT_{\beta})$ for some integer $M$.  The proof will be  divided into two parts, in the first part for $t\in (s, MT_{\beta})$  we use trivial \textit{a priori} energy estimate as the feedback during this period is not the one that provides decay stabilization, while in the second part for $t\geq MT_{\beta}$ we apply Theorem \ref{thm-stab} on each interval $[KT_{\beta}, (K+1)T_{\beta}), K\geq M$ such that the energy of the solution decay at least  $e^{-\beta T_{\beta}}$ in each of them.

The well-posedness of the closed-loop system is trivial, as in each interval $t\in (s, MT_{\beta})$ and $[KT_{\beta},(K+1)T_{\beta})), K\geq M$ the value of $l_k$ does not change, which implies that  the system can be regarded as an open-loop system, hence, of course, admit an unique solution.  In the following, we only focus on the stability issues. 

For any given $\beta$, thanks to Theorem \ref{thm-stab}, we get the value of $C_{\beta}>0, \varepsilon_{\beta}>0$ and smooth functions $\{b_k(t)\}_{k=1}^{N_{\beta}}$.    Let us  also select  some constant $T_{\beta}>0$ that will be chosen explicitly later on.  Without loss of generality, we  assume that $s\in [(M-1)T_{\beta}, MT_{\beta})$.  \\

In the first part $t\in (s, MT_{\beta})$, for ease of notations we work on $(u-1, u_t)$ instead of $(u, u_t)$, while still use the  notation of  $(u, u_t)$. 
For $t\in (s, MT_{\beta})$, $l_k(t)= l_k^0$  and the system on $(u, u_t)$ reads as 
\begin{gather}
\begin{cases}
\Box u(t, x)-2u(t, x)= 3u^2(t, x)+ u^3(t, x)  , t\in(s, MT_{\beta}), x\in \Omega      \notag\\
u_t(t, x)+ au_{\nu}(t, x)=\sum_{k=1}^N l_k^0 b_k\left(t-[\frac{t}{T_{\beta}}]T_{\beta}\right),  t\in(s, MT_{\beta}),  x\in \partial\Omega,    \notag\\
u(s, x)= u_0-1, u_t(s, x)=v_0.
\end{cases}
\end{gather}
We then extend this equation on the time interval $(s, s+ T_{\beta})$.    By regarding the nonlinear term as a source term (thanks to the sub-critical setting) and by applying the direct energy estimate Lemma \ref{lem-a-es}, we know that the energy
\begin{equation}
\frac{1}{2}\lVert u(t)\lVert_{\mathcal{H}^1}^2\leq \tilde{E}(u(t)):= \frac{1}{2}\lVert u(t)\lVert_{\mathcal{H}^1}^2+ \frac{1}{a}\int_s^t \int_{\partial \Omega}u_t^2(\tau, x)d\sigma d\tau   \notag
\end{equation}
satisfies
\begin{align*}
\frac{d}{dt}\lVert u(t)\lVert_{\mathcal{H}^1}&\leq C\lVert u(t)\lVert_{\mathcal{H}^1}+ 3\lVert u(t)\lVert_{\mathcal{H}^1}^2+\lVert u(t)\lVert_{\mathcal{H}^1}^3+C\sum_k |l_k^0|, \\
&\leq C\lVert u(t)\lVert_{\mathcal{H}^1}+ 3\lVert u(t)\lVert_{\mathcal{H}^1}^2+\lVert u(t)\lVert_{\mathcal{H}^1}^3+ 2CC_{\beta} \lVert (u_0, v_0)-(1, 0)\lVert_{\mathcal{H}^1}, \\
&= C\lVert u(t)\lVert_{\mathcal{H}^1}+ 3\lVert u(t)\lVert_{\mathcal{H}^1}^2+\lVert u(t)\lVert_{\mathcal{H}^1}^3+ 2CC_{\beta} \lVert u(s)\lVert_{\mathcal{H}^1}.
\end{align*}
Though evolve as a nonlinear equation that may blow up for any nontrivial initial state, for any given bounded time interval we are allowed to set the initial data small enough such that the solution grows exponentially in that interval. More precisely, 
we will be selecting $T_{\beta}$ and $\tilde{\varepsilon}_{T_{\beta}}$ by that 
\begin{equation}
\lVert u(t)\lVert_{\mathcal{H}^1}\leq 1, \forall t\in (s, s+ T_{\beta}), \textrm{ if } \lVert u(t)\lVert_{\mathcal{H}^1}\leq \tilde{\varepsilon}_{T_{\beta}},    
\end{equation}
which is possible by choosing $\tilde{\varepsilon}_{T_{\beta}}$ for any given $T_{\beta}$. If the above condition holds, then for any $\lVert u(s)\lVert_{\mathcal{H}^1}\leq \tilde{\varepsilon}_{T_{\beta}}$, we have that 
\begin{equation}
\lVert u(t)\lVert_{\mathcal{H}^1}\leq C_{T_{\beta}} \lVert u(s)\lVert_{\mathcal{H}^1},  t\in (s, s+ T_{\beta}).     \notag
\end{equation}
Therefore, by changing back the notation of $(u, u_t)$, we have proved that for any $T_{\beta}$ there exists $\tilde{\varepsilon}_{T_{\beta}}>0$ and $C_{T_{\beta}}>0$ such that, for any initial state satisfying 
\begin{equation}
\lVert (u_0-1, v_0)\lVert_{\mathcal{H}^1}\leq \tilde{\varepsilon}_{T_{\beta}},    \notag
\end{equation}
we know that  the solution of the closed-loop system verifies
\begin{equation}
\lVert (u, u_t)(t)-(1, 0)\lVert_{\mathcal{H}^1}\leq C_{T_{\beta}} \lVert (u_0-1, v_0) \lVert_{\mathcal{H}^1},  t\in (s, MT_{\beta}).    
\end{equation}
\\
Next, we turn to the stabilization part. In this part, we will call directly the exponential decay Theorem \ref{thm-stab}. In fact, we only work on the first periodic  interval $[MT_{\beta}, (M+1)T_{\beta})$, for which the ``initial state" is $(u(MT_{\beta}), u_t(MT_{\beta}))$. By the definition of $l_k(MT_{\beta})$, in this interval $l_k(t)\equiv\tilde{l}_k(u(MT_{\beta}, u_t(MT_{\beta})$, then the system becomes 
\begin{gather}
\begin{cases}
\Box u(t, x)+ u(t, x)-u^3(t, x)=0, t\in[MT_{\beta}, (M+1)T_{\beta}), x\in \Omega      \notag\\
u_t(t, x)+ au_{\nu}(t, x)=  \sum_{k=1}^{N_{\beta}}  \tilde{l}_k(\tilde{u}_0, \tilde{v}_0) b_k(t-MT_{\beta}),  t\in[MT_{\beta}, (M+1)T_{\beta}),  x\in \partial\Omega,    \notag\\
u(MT_{\beta}, x)= \tilde u_0, u_t(MT_{\beta}, x)=\tilde v_0.
\end{cases}
\end{gather}

Observe that the preceding equation is exactly the equation that appears in Theorem \ref{thm-stab} with initial time $MT_{\beta}$ and initial state $(\tilde{u}_0, \tilde{v}_0)$.  Hence, once the initial state is smaller than $\varepsilon_{\beta}$ we will be able to apply the decay theorem, which is fulfilled if $(u_0, v_0)$ is selected to be smaller than some $\varepsilon_{T_{\beta}}< \tilde{\varepsilon}_{T_{\beta}}$ such that 
\begin{equation}
\varepsilon_{T_{\beta}} C_{T_{\beta}}\leq \varepsilon_{\beta}.  \notag
\end{equation}
 Thus we have the following exponential decay result, $\forall t\in [MT_{\beta}, (M+1)T_{\beta})$, 
\begin{equation}
\lVert (u(t), u_t(t))-(1, 0)\lVert_{\mathcal{H}^1}\leq 2C_{\beta} e^{-\beta (t-MT_{\beta})} \lVert (\tilde u_0, \tilde v_0)-(1, 0)\lVert_{\mathcal{H}^1}. \notag
\end{equation}
Notice that the constant $C_{\beta}$ is independent of $T_{\beta}$, we will select $T_{\beta}$ large enough such that 
\begin{equation}
2C_{\beta} e^{-\beta T_{\beta}}\leq e^{-(\beta-\varepsilon_0) T_{\beta}},
\end{equation}
which is of course possible as $\varepsilon_0>0$.\\

Since $u((M+1)T_{\beta})$ is even smaller than  $u(MT_{\beta})$, we are allowed to repeat the same  procedure in $[(M+1)T_{\beta}, (M+2)T_{\beta})$ to achieve exponential decay.  By repeating this piecewise exponential decay procedure, we let the solution tends to 0.  To be more precisely, we have the following decay result
\begin{align*}
\lVert (u, u_t)(t)-(1, 0)\lVert_{\mathcal{H}^1}&\leq C_{T_{\beta}} \left(e^{-(\beta-\varepsilon_0) T_{\beta}}\right)^{[\frac{t-MT_{\beta}}{T_{\beta}}]} 2C_{\beta}\lVert (u_0, v_0)-(1, 0)\lVert_{\mathcal{H}^1}, \\
&\leq \tilde C_{\beta}e^{-(\beta-\varepsilon_0) (t-s)}\lVert (u_0, v_0)-(1, 0)\lVert_{\mathcal{H}^1}, \forall t>s.
\end{align*}

The decay of $l_k$ then obviously comes from its definition.

\section{Further comments}\label{sec-com}
We believe that this paper presents more interesting open questions than answers.  Even if we do not  ask about other typical stabilization problems  such as rapid stabilization or finite time stabilization, the following questions come naturally. 

As we can see from Theorem \ref{thm-unstab} that the system around simple positive static solutions with any dissipative boundary condition is unstable, it is already of significant importance to understand the stability analysis for dispersive equations  around soliton-like solutions, for instance for the easiest case, is the system around $u=1$ always unstable with dissipative boundary condition $u_t(x)+ a(x) u_{\nu}(x)=0$? It is natural expect instability when $a(x)$ close enough to $a$. What about other cases, like $a(x)$ compactly supported in a part of the boundary?  What about  blow up situations?

 As stated in the introduction, we start by considering the focusing subcritical Klein-Gordon in the radial setting around the simplest static solution $u=1$.  We hope that our method that gives quantitative stabilization can be generalized to non radial cases (still close to the static solution).   Of course, it will be further important to stabilize focusing systems around soliton-like static solutions, for which the specific  situation could become more complicated and so would  the analysis. 
 
Due to the focusing setting, we are not able to adapt the famous Bardos-Lebeau-Rauch theory, and also as we can see in Theorem \ref{thm-unstab} that the dissipative boundary stabilization fails (maybe also fails for other focusing cases), it is thus natural to ask whether we can stabilize the system by adding some control term on a part of the dissipative boundary  under suitable geometric conditions.

\appendix
\section{Proof of Lemma \ref{lem:sesqsect} and  Lemma \ref{lem-underline}}\label{App-A}

\begin{proof}[Proof of Lemma \ref{lem:sesqsect}]

Consider 
\begin{align*}
p_{\omega}(\phi, \phi): = \int_{\Omega}\big|\nabla \phi\big|^2\,dx-\int_{\Omega}(V+\omega^2)\big|\phi\big|^2dx+ \frac{i\omega}{a}\int_{\partial \Omega}\big|\phi\big|^2d\sigma,\,\omega\in \mathbf{C}
\end{align*}
Our aim is to exhibit for each $\omega\in \mathbf{C}$ a real constant $\gamma$ as well as a positive constant $C$ with the property that for each $\phi$ in the domain of $p_{\omega}(\phi, \phi)$ we have 
\begin{align*}
\big|\Im p_{\omega}(\phi, \phi)\big|\leq C\cdot \big[\Re p_{\omega}(\phi, \phi) - \gamma\big\|\phi\big\|^2\big]. 
\end{align*}
Writing $\omega: = \alpha + i\beta$, we claim that in fact 
\[
\gamma = -\big(2\alpha^2 + \big\|V\big\|_{L^\infty(\Omega)} + C\big)
\]
for sufficiently large $C = C(\beta, a, L)$ works. Observe that 
\begin{align*}
\Re p_{\omega}(\phi, \phi) - \gamma\big\|\phi\big\|^2 \geq \int_{\Omega}\big|\nabla \phi\big|^2\,dx + (C+\alpha^2 + \beta^2)\int_{\Omega}\big|\phi\big|^2dx - \frac{\beta}{a}\int_{\partial \Omega}\big|\phi\big|^2d\sigma.
\end{align*}
It remains to estimate the third term on the right. Observe that given $\phi\in H^1(\Omega)$, which we assume real-valued for simplicity, there is $r_*\in [\frac{L}{4}, \frac{3L}4]$ with the property that 
\[
\big|\phi(r_*)\big|\leq C_1(L)\cdot \big\|\phi\big\|_{L^2(\Omega)}, 
\]
and then using the Cauchy-Schwarz inequality and the fundamental theorem of calculus we infer 
\begin{align*}
\big|\phi(1)\big|^2&\leq \big|\phi(r_*)\big|^2 + \big|\int_{r_*}^1 \phi_s(s)\cdot \phi(s)\,ds\big|\\
&\leq C_1^2(L)\cdot \big\|\phi\big\|_{L^2(\Omega)}^2 + C_2(L)\cdot \big\|\phi\big\|_{H^1(\Omega)}\cdot \big\|\phi\big\|_{L^2(\Omega)}. 
\end{align*}
Applying the AGM inequality to the last term, we infer that 
\begin{align*}
\frac{\beta}{a}\int_{\partial \Omega}\big|\phi\big|^2d\sigma\leq \frac12\cdot \big\|\phi\big\|_{H^1(\Omega)}^2 + C_3(\beta, a, L)\cdot \big\|\phi\big\|_{L^2(\Omega)}^2. 
\end{align*}
If we now define 
\begin{align*}
C: = C_3(\beta, a, L) + 1, 
\end{align*}
we conclude that 
\begin{align*}
\Re p_{\omega}(\phi, \phi) - \gamma\big\|\phi\big\|^2 \geq  \frac12\int_{\Omega}\big|\nabla \phi\big|^2\,dx + (1+\alpha^2 + \beta^2)\int_{\Omega}\big|\phi\big|^2dx. 
\end{align*}

To conclude the proof of the lemma, it suffices to combine the preceding estimate with the following one:
\begin{align*}
\big|\Im p_{\omega}(\phi, \phi) \big|&\leq 2|\alpha\beta|\cdot \int_{\Omega}\big|\nabla \phi\big|^2\,dx + \big|\frac{a}{\alpha}\big|\cdot \int_{\partial \Omega}\big|\phi\big|^2d\sigma\\
&\leq C_4(a,\alpha,\beta,L)\cdot \big\|\phi\big\|_{H^1(\Omega)}^2. 
\end{align*}

\end{proof}

\begin{proof}[Proof of Lemma \ref{lem-underline}]
Assume that for some $U$ we have that 
\begin{gather*}
\Delta U+ (V+\omega^2)U= 0, \textrm{ in } \Omega,\\
i\omega U+ aU_{\nu}= 0, \textrm{ on } \partial \Omega.
\end{gather*}
First we treat the case when $\beta=0$ and $\alpha\neq 0$. Via direct calculation we get
\begin{align*}
0&=\langle \Delta U+ (V+\omega^2)U, U\rangle,\\
&= \int_{\Omega} -|\nabla U|^2+ (V+\alpha^2)|U|^2 dx + \int_{\partial\Omega}\partial_{\nu}U \bar{U}d\sigma, \\
&= \int_{\Omega} -|\nabla U|^2+ (V+\alpha^2)|U|^2 dx -\frac{i\alpha}{a} \int_{\partial\Omega}|U|^2d\sigma,
\end{align*}
which clearly implies that $U=0,\,\partial_\nu U = 0$ on the boundary $\partial\Omega$. This in turn implies $U =0$ by unique continuation. 
\\
Next consider the case when $\beta<0$ and $\alpha\neq 0$. By using the same integration  by parts we have
\begin{align*}
0&=\langle \Delta U+ (V+\omega^2)U, U\rangle,\\
&= \int_{\Omega} -|\nabla U|^2+ (V+\alpha^2-\beta^2+ 2i\alpha\beta)|U|^2 dx + \int_{\partial\Omega}\partial_{\nu}U \bar{U}d\sigma, \\
&= \int_{\Omega} -|\nabla U|^2+ (V+\alpha^2-\beta^2+ 2i\alpha\beta)|U|^2 dx -\frac{i\alpha-\beta}{a} \int_{\partial\Omega}|U|^2d\sigma.
\end{align*}
Now we consider the imaginary part of the preceding formula, which leads to 
\begin{equation}
0= \int_{\Omega} 2i\alpha\beta|U|^2 dx -\frac{i\alpha}{a} \int_{\partial\Omega}|U|^2d\sigma= i\alpha \left( \int_{\Omega} 2\beta|U|^2 dx -\frac{1}{a} \int_{\partial\Omega}|U|^2d\sigma\right).    \notag
\end{equation}
Because $a>0$ and $\beta<0$, we know that $U=0$.

To get the bounds for the inhomogeneous problem, by taking the same calculation we observe  that 
\begin{equation}
\Im \int_{\Omega} H\bar U dx=  \alpha \left( \int_{\Omega} -2\beta|U|^2 dx +\frac{1}{a} \int_{\partial\Omega}|U|^2d\sigma\right),     \notag
\end{equation}
which, together with the Cauchy-Schwartz inequality, yields 
\begin{gather*}
\lVert U\lVert_{L^2(\Omega)}\leq \frac{1}{2|\alpha \beta|}\lVert H\lVert_{L^2(\Omega)}, \\
\lVert U\lVert_{L^2(\partial\Omega)}\leq \left(\frac{a}{2\alpha^2|\beta|}\right)^{\frac{1}{2}}\lVert H\lVert_{L^2(\Omega)}.
\end{gather*}

In the end, when $\omega=i\beta$ with $\beta\in (-C, 0)$, we have 
\begin{equation}
\int_{\Omega} V U^2 dx=\int_{\Omega}|\nabla U|^2+ \beta^2 U^2 dx- \frac{\beta}{a}\int_{\partial \Omega} U^2d\sigma,     \notag
\end{equation}
which leads to
\begin{equation}
\lVert U\lVert_{H^1(\Omega)}\lesssim \lVert U\lVert_{L^2(\Omega)}.     \notag
\end{equation}
It appears that we can only get a $\lesssim \frac{1}{|\beta|}$ bound for the trace, however, as we are working in the radial setting, the $H^1$ bound of $U$ gives the $H^1_r$ bounded of $rU(r)$, which further implies the trace bound
\begin{equation}
|L U(L)|\leq   \lVert rU(r)\lVert_{C(0, L)} \lesssim  \lVert rU(r)\lVert_{H^1_r} \lesssim \lVert U\lVert_{H^1(\Omega)}\lesssim \lVert U\lVert_{L^2(\Omega)}.  \notag
\end{equation}
\end{proof}

\section{On the asymptotic estimates on $\eta$ and $\Gamma_r$}\label{App-B}
\subsection{The  explicit asymptotic estimate of $\eta$}
By the definition of $\eta$ we have 
\begin{align*}
&\;\;\;\;\; (iL \omega-a) \left(e^{2i \langle\omega\rangle L}-1\right)   + i aL\langle\omega\rangle \left(e^{2i \langle\omega\rangle L}+ 1\right)   \\
&=-\eta \left(  i aL\langle\omega\rangle \left(e^{2i \langle\omega\rangle L}- 1  \right)+ (iL \omega-a) \left(e^{2i \langle\omega\rangle L}+ 1\right) \right),
\end{align*}
thus, {\small
\begin{align*}
&\;\;(i L -\frac{L\beta+a}{\alpha})\left( e^{i 2L \omega}-1 + \frac{iL}{\alpha}e^{i 2L \omega}+ O(\frac{1}{\alpha^2})\right)+ iaL (1+ \frac{i\beta}{\alpha}+O(\frac{1}{\alpha^2}))\\&\hspace{7.5cm}\cdot\left(e^{i 2L \omega}+1 + \frac{iL}{\alpha}e^{i 2L \omega}+ O(\frac{1}{\alpha^2})\right), \\
&=-\eta \left(( (i L -\frac{L\beta+a}{\alpha})\left( e^{i 2L \omega}+1 + \frac{iL}{\alpha}e^{i 2L \omega}+ O(\frac{1}{\alpha^2})\right)+ iaL (1+ \frac{i\beta}{\alpha}+O(\frac{1}{\alpha^2}))\right.\\ 
&\left.\hspace{7.5cm}\cdot\left(e^{i 2L \omega}-1 + \frac{iL}{\alpha}e^{i 2L \omega}+ O(\frac{1}{\alpha^2})\right)  \right)
\end{align*}
}
therefore,
{\small
\begin{align*}
&\;\;(1+ \frac{i}{\alpha}(\beta+ \frac{a}{L}))\left( e^{i 2L \omega}-1 + \frac{iL}{\alpha}e^{i 2L \omega}+ O(\frac{1}{\alpha^2})\right)+ a (1+ \frac{i\beta}{\alpha}+O(\frac{1}{\alpha^2}))\\&\hspace{7.5cm}\cdot\left(e^{i 2L \omega}+1 + \frac{iL}{\alpha}e^{i 2L \omega}+ O(\frac{1}{\alpha^2})\right), \\
&=-\eta\left(  (1+ \frac{i}{\alpha}(\beta+ \frac{a}{L})) \left( e^{i 2L \omega}+1 + \frac{iL}{\alpha}e^{i 2L \omega}+ O(\frac{1}{\alpha^2})\right)+ a (1+ \frac{i\beta}{\alpha}+O(\frac{1}{\alpha^2}))  \right.
\\&\hspace{7.5cm}\left.\cdot\left(e^{i 2L \omega}-1 + \frac{iL}{\alpha}e^{i 2L \omega}+ O(\frac{1}{\alpha^2})\right)    \right).
\end{align*}
}
To simplify the notations, we denote $e^{i 2L \omega}$ by $e$, which gives 
\begin{align*}
&\;\; e-1+ a(e+1)+ \frac{i}{\alpha} \left((\beta+\frac{a}{L})(e-1)+ Le+ a\beta(e+1)+ aLe\right)+ O(\frac{1}{\alpha^2}), \\
&= -\eta\left( e+1+ a(e-1)+ \frac{i}{\alpha} \left((\beta+\frac{a}{L})(e+1)+ Le+ a\beta(e-1)+ aLe\right)+ O(\frac{1}{\alpha^2}) \right).
\end{align*}
Therefore, $\eta= \eta_0+ \frac{\eta_1}{\alpha}+ O(\frac{1}{\alpha^2})$ satisfies {\small
\begin{align*}
 &\;\;\;\;\;\;\;\;\;\;\;\;\;\;\;\;\;\;\;\;\;\;\;\;\;\;\;\;e-1+ a(e+1)= -\eta_0\left( e+1+ a(e-1)\right), \notag\\
 &\;\;\;\;(\beta+\frac{a}{L})(e-1)+ Le+ a\beta (e+1)+ aLe\notag \\ &= i\eta_1 \left(e+1+a(e-1)\right) -\eta_0 \left((\beta+\frac{a}{L})(e+1)+ Le+ a\beta(e-1)+ aLe\right)+ O(\frac{1}{\alpha^2}),  \notag
\end{align*}
}
thus
\begin{align*}
\eta_0&= \frac{1-a- e(1+a)}{1-a+ e(1+a)}= \frac{1-c_0 e}{1+c_0e},\;  c_0= \frac{1+a}{1-a},  \\
\eta_1&= -i\frac{e}{(1+c_0e)^2}\left( \beta+\frac{a}{L}+L+aL+a\beta-c_0(\beta+\frac{a}{L})+c_0a\beta\right) \frac{1}{1-a}= c_1 \frac{e}{(1+c_0e)^2},
\end{align*}
where $c_0, c_1$ are constants.
Hence, there exist $\pi/L$ periodic functions $\eta_0, \eta_1$ such that 
\begin{align*}
\eta&= \eta_0+ \frac{\eta_1}{\alpha}+ O(\frac{1}{\alpha^2}),\\
&= \frac{1-c_0 e}{1+c_0e}+ c_1 \frac{e}{(1+c_0e)^2} \frac{1}{\alpha}+ O(\frac{1}{\alpha^2}), \\
&=  \frac{-1+d_0 e^{-i2L\alpha}}{1+d_0 e^{-i2L\alpha}}+ \frac{d_1 e^{-i2L\alpha}}{(
1+ d_0 e^{-i2L\alpha})^2}\frac{1}{\alpha}+ O(\frac{1}{\alpha^2})
\end{align*}
where constants $d_i$ satisfies
\begin{equation}
d_0= \frac{1-a}{1+a} e^{2L\beta}<1,\; d_1= c_1 \left(\frac{1-a}{1+a}\right)^2 e^{2L\beta}.      \notag
\end{equation}

\subsection{On the estimate of $\Gamma_r$}
Here we present  the explicit calculation on $\Gamma_r$.
For $r<s$, we have {\small
\begin{align*}
&\;\;\;\;\;4\Gamma(r, s)-4\tilde{\Gamma}(r, s), \\
&=\frac{i}{\alpha\eta}\left(\phi_1(r)\phi_2(s)- e^-(r)\big(e^-(s)+\eta e^+(s)\big)\right)- \frac{i}{\alpha}(\frac{1}{\eta_0}- \frac{1}{\eta})e^-(r)e^-(s)+ \frac{i}{\eta}\left(\frac{1}{\langle\omega\rangle}-\frac{1}{\alpha}\right)\phi_1(r)\phi_2(s), \\
&= \frac{i}{\alpha\eta} \left(\big( e^-(r)+\frac{ir}{2\alpha}e^+(r)+\mathbf{r}(r, \alpha)\big)\big(e^-(s)+\eta e^+(s)+O(\frac{1}{\alpha})\big)- e^-(r)\big(e^-(s)+\eta e^+(s)\big)\right) \\
&\;\;\;\;-  \frac{i}{\alpha}(\frac{\eta_1}{\eta_0^2}\frac{1}{\alpha}+O(\frac{1}{\alpha^2}))e^-(r)e^-(s)+ O(\frac{1}{\alpha^3})\phi_1(r)\phi_2(s) ,\\
&=  \frac{i}{\alpha\eta} \left( \big( e^-(r)+\frac{ir}{2\alpha}e^+(r)+\mathbf{r}(r, \alpha)\big)O(\frac{1}{\alpha})+\big(\frac{ir}{2\alpha}e^+(r)+ \mathbf{r}(r, \alpha)\big)\big(e^-(s)+\eta e^+(s)\big)\right),\\
&\;\;\;\;-  \frac{i}{\alpha}(\frac{\eta_1}{\eta_0^2}\frac{1}{\alpha}+O(\frac{1}{\alpha^2}))e^-(r)e^-(s)+ O(\frac{1}{\alpha^3})\phi_1(r)\phi_2(s),
\end{align*} }
thus  
{\small \begin{align*}
&\;\;\;\;\; 4\Gamma_r(r, s)-4\tilde{\Gamma}_r(r, s)\\
&=-\frac{\omega}{\alpha\eta}e^+(r)\left(\frac{is}{2\alpha}e^+(s)+\eta\frac{is}{2\alpha}e^-(s) \right)-\frac{\omega}{\alpha\eta}\frac{ir}{2\alpha}e^-(r)(e^-(s)+\eta e^+(s))+\frac{\omega}{\alpha^2}\frac{\eta_1}{\eta_0^2}e^+(r)e^-(s)  +O(\frac{1}{\alpha^2}), \\
&= -\frac{i}{2\eta \alpha}\left(s e^+(r)\left(e^+(s)+\eta  e^-(s)\right) +  r e^-(r)(e^-(s)+\eta e^+(s))    \right)+ \frac{1}{\alpha}\frac{\eta_1}{\eta_0^2}e^+(r)e^-(s)  +O(\frac{1}{\alpha^2}).
\end{align*}}
For $s\leq r$, we have  {\small
\begin{align*}
&\;\;\;\;\; 4\Gamma(r, s)-4\tilde{\Gamma}(r, s)\\
&=\frac{i}{\alpha\eta}\left(\phi_1(s)\phi_2(r)- e^-(s)\big(e^-(r)+\eta e^+(r)\big)\right)- \frac{i}{\alpha}(\frac{1}{\eta_0}- \frac{1}{\eta})e^-(r)e^-(s)+ \frac{i}{\eta}\left(\frac{1}{\langle\omega\rangle}-\frac{1}{\alpha}\right)\phi_1(s)\phi_2(r), \\
&= \frac{i}{\alpha\eta} \left(
O(\frac{1}{\alpha})\Big(e^-(r)+\eta e^+(r)+\frac{ir}{2\alpha}(e^+(r)+\eta e^-(r))+\mathbf{r}(r, \alpha)\Big)
\right. \\
&\;\;\;\;\left.+ e^-(s)\Big(\frac{ir}{2\alpha}(e^+(r)+\eta e^-(r))+\mathbf{r}(r, \alpha)\Big) \right)-  \frac{i}{\alpha}(\frac{\eta_1}{\eta_0^2}\frac{1}{\alpha}+O(\frac{1}{\alpha^2}))e^-(r)e^-(s)+ O(\frac{1}{\alpha^3})\phi_1(s)\phi_2(r),
\end{align*}}
hence  {\small 
\begin{align*}
&\;\;\;\;\; 4\Gamma_r(r, s) -4\tilde{\Gamma}_r(r, s), \\
&= \frac{i}{\alpha\eta}(\frac{is}{2\alpha}e^+(s)+\mathbf{r}(s, \alpha))\Big(e^-(r)+\eta e^+(r)+\frac{ir}{2\alpha}(e^+(r)+\eta e^-(r))+\mathbf{r}(r, \alpha)\Big)_r,\\
&\;\;\;\;+ \frac{i}{\alpha\eta}e^-(s)\Big(\frac{ir}{2\alpha}(e^+(r)+\eta e^-(r))+\mathbf{r}(r, \alpha)\Big)_r-  \left( \frac{i}{\alpha}(\frac{\eta_1}{\eta_0^2}\frac{1}{\alpha}+O(\frac{1}{\alpha^2}))e^-(r)e^-(s)\right)_r\\
&\;\;\;\;+  O(\frac{1}{\alpha^3})\phi_1(s)(\phi_2)_r(r)     ,\\
&=-\frac{\omega}{\alpha\eta} \frac{is}{2\alpha}e^+(s)(e^+(r)+\eta e^-(r))-\frac{\omega}{\alpha\eta} \frac{ir}{2\alpha}e^-(s)(e^-(r)+\eta e^+(r))+\frac{\omega}{\alpha^2}\frac{\eta_1}{\eta_0^2}e^+(r)e^-(s) +O(\frac{1}{\alpha^2}), \\
&= -\frac{i}{2\alpha\eta}\Big(se^+(s)(e^+(r)+\eta e^-(r))+re^-(s)(e^-(r)+\eta e^+(r)) \Big)+ \frac{1}{\alpha}\frac{\eta_1}{\eta_0^2}e^+(r)e^-(s)+ O(\frac{1}{\alpha^2}).
\end{align*}  }

\bibliographystyle{plain} 
\bibliography{wavestab}

\end{document}